 \newtheorem{theorem}{Theorem}[section]
\newtheorem{proposition}[theorem]{Proposition}
\newtheorem{lemma}[theorem]{Lemma}
\theoremstyle{remark}
\newtheorem{remark}[theorem]{Remark}
\newtheorem{definition}[theorem]{Definition}
\newtheorem{example}[theorem]{Example}
\newtheorem{examples}[theorem]{Examples}
\newcommand\A{\mathbb{A}}
\renewcommand\L{\mathcal{L}}
\newcommand{\V}{\mathbb{V}}
\newcommand{\TM}{\mathbb{T}M}
\newcommand{\TS}{\mathbb{T}S}
\newcommand{\TG}{\mathbb{T}G}
\newcommand{\T}{\mathbb{T}}
\newcommand{\n}{\mathfrak{n}}
\newcommand{\X}{\mf{X}}
\newcommand{\C}{\mathbb{C}}
\newcommand{\pr}{\on{pr}}
\newcommand\lie[1]{\mathfrak{#1}}
\renewcommand{\k}{\lie{k}}
\newcommand{\h}{\lie{h}}
\newcommand{\g}{\lie{g}}
\newcommand{\on}{\operatorname}
\newcommand{\Ad}{ \on{Ad} }
\renewcommand{\ker}{ \on{ker}}
\newcommand{\Mult}{{\on{Mult}}}
\newcommand{\Inv}{{\on{Inv}}}
\newcommand{\diag}{{\on{diag}}}
\newcommand{\da}{\dasharrow}
\newcommand{\red}{{ \on{red} }}
\newcommand\dirac{/\kern-1.2ex\partial} 
\newcommand\qu{/\kern-.7ex/} 
\renewcommand\a{\mathsf{a}}
\newcommand{\hra}{\hookrightarrow}
\newcommand{\xra}{\xrightarrow}
\renewcommand{\d}{{\mbox{d}}}
\newcommand{\dd}{\mf{d}}
\newcommand{\ol}{\overline}
\newcommand\sig{\sigma}
\newcommand\Om{\Omega}
\newcommand{\f}{\frac}
\renewcommand{\l}{\langle}
\renewcommand{\r}{\rangle}
\newcommand{\hh}{{\textstyle \f{1}{2}}}
\newcommand{\ti}{\tilde}
\newcommand\pt{\on{pt}}
\newcommand{\ran}{\on{ran}}
\newcommand\rk{\on{rk}}
\newcommand\beqn{\begin{equation}}
\newcommand\eeqn{\end{equation}}
\newcommand{\mf}{\mathfrak}
\newcommand{\beq}{\begin{eqnarray*}}
\newcommand{\eeq}{\end{eqnarray*}}
\newcommand{\Cour}[1]      {[\![#1]\!]}
\begin{document}

\title[]{Courant algebroids and Poisson geometry}

\author{D. Li-Bland}
\address{University of Toronto, Department of Mathematics,
40 St George Street, Toronto, Ontario M4S2E4, Canada }
\email{dbland@math.toronto.edu}

\author{E. Meinrenken}
\address{University of Toronto, Department of Mathematics,
40 St George Street, Toronto, Ontario M4S2E4, Canada }
\email{mein@math.toronto.edu}

\date{\today}

\begin{abstract}
  Given a manifold $M$ with an action of a quadratic Lie algebra
  $\dd$, such that all stabilizer algebras are co-isotropic in $\dd$,
  we show that the product $M\times\dd$ becomes a Courant algebroid
  over $M$.  If the bilinear form on $\dd$ is split, the choice of
  transverse Lagrangian subspaces $\g_1,\g_2$ of $\dd$ defines a
  bivector field $\pi$ on $M$, which is Poisson if $(\dd,\g_1,\g_2)$
  is a Manin triple. In this way, we recover the Poisson structures of
  Lu-Yakimov, and in particular the Evens-Lu Poisson structures on the
  variety of Lagrangian Grassmannians and on the de Concini-Procesi
  compactifications. Various Poisson maps between such examples are
  interpreted in terms of the behaviour of Lagrangian splittings under
  Courant morphisms.
\end{abstract}

\maketitle
\setcounter{tocdepth}{3}

{\small \tableofcontents \pagestyle{headings}}

\setcounter{section}{-1}
\vskip.3in\section{Introduction}\label{sec:intro}
Let $\dd$ be a (real or complex) Lie algebra, equipped with an
invariant symmetric bilinear form of split signature. A pair
$\g_1,\g_2$ of transverse Lagrangian subalgebras of $\dd$ defines a
\emph{Manin triple} $(\dd,\g_1,\g_2)$. Manin triples were introduced
by Drinfeld \cite{dr:qu}, and are of fundamental importance in his
theory of Poisson Lie groups and Poisson homogeneous spaces. In their
paper \cite{ev:on}, S. Evens and J.-H. Lu found that every Manin
triple defines a Poisson structure on the variety $X$ of all
Lagrangian subalgebras $\dd$. If $\g$ is a complex semisimple Lie
algebra, and $\dd=\g\oplus\ol{\g}$ (where $\g$ carries the Killing
form and $\ol{\g}$ indicates the same Lie algebra with the opposite
bilinear form), then one of the irreducible components of $X$ is the
de Concini-Procesi `wonderful compactification' of the adjoint group
$G$ integrating $\g$. More recently, J.-H. Lu and M. Yakimov
\cite{lu:reg} studied Poisson structures on homogeneous spaces of the
form $D/Q$, where $D$ is a Lie group integrating $\dd$, and $Q$ is a
closed subgroup whose Lie algebra is co-isotropic for the inner
product on $\dd$. Their results show that if $M$ is a $D$-manifold 
such that all stabilizer algebras are co-isotropic, then the Manin
triple $(\dd,\g_1,\g_2)$ defines a Poisson structure on $M$. 

In this paper, we put this construction into the framework of Courant algebroids.
Suppose $M$ is a manifold with a Lie algebra action $\a\colon
M\times\dd\to TM$. We show that the trivial bundle $M\times\dd$
carries the structure of a Courant algebroid, with $\a$ as the anchor
map, and with Courant bracket extending the Lie bracket on constant
sections, \emph{if and only if} all stabilizer algebras are co-isotropic.  If
this is the case, any Lagrangian subalgebra of $\dd$ defines a Dirac
structure, and Manin triples $(\dd,\g_1,\g_2)$ define pairs of
transverse Dirac structures. In turn, by a result of Mackenzie-Xu
\cite{mac:lie}, any pair of transverse Dirac structures determines a
Poisson bivector $\pi$. In this way, we recover the Lu-Yakimov 
Poisson bivector. Various Poisson maps between
examples of this type are interpreted in terms of the behaviour under
Courant morphisms.

More generally, we give an explicit formula for the Schouten bracket
$[\pi,\pi]$ for the bivector resulting from any pair of transverse
Lagrangian subspaces (not necessarily Lie subalgebras). If $\g$ is a
Lie algebra with invariant inner product, then $\dd=\g\oplus\ol{\g}$
with Lagrangian splitting given by the diagonal $\g_\Delta=\{(x,x)|\ 
x\in\g\}$ and the anti-diagonal $\g_{-\Delta}=\{(x,-x)|\ x\in\g\}$ is
a typical example. The obvious $\dd$-action on the Lie group $G$
integrating $\g$ has co-isotropic (in fact Lagrangian) stabilizers,
and the bivector defined by $(\dd,\g_\Delta,\g_{-\Delta})$ is the
quasi-Poisson structure on $G$ described in \cite{al:ma,al:qu}.  In a
similar fashion, one obtains a natural quasi-Poisson structure on the
variety of Lagrangian subalgebras of $\g\oplus\ol{\g}$.

Our theory involves some general constructions with Courant
algebroids, which may be of independent interest. Suppose $\A\to N$ is
a given Courant algebroid. We introduce the notion of an \emph{action
  of a Courant algebroid} $\A\to N$ on a manifold $M$, similar to the
Higgins-Mackenzie definition of a Lie algebroid action \cite{hig:alg}.
A Courant algebroid action is given by two maps $\Phi\colon M\to N,\ 
\varrho\colon \Phi^*\A\to TM$ with a suitable compatibility condition,
and we show that the vector bundle pull-back $\Phi^*\A$ acquires the
structure of a Courant algebroid, provided all `stabilizers'
$\ker(\varrho_m)$ are co-isotropic.

The organization of this article is as follows. Section
\ref{sec:prelim} gives a review of Courant algebroids $\A\to M$, Dirac
structures, and Courant morphisms. In Section \ref{sec:con} we discuss
coisotropic reductions of Courant algebroids, which we then use to
define pull-backs $\Phi^!\A$ under smooth maps. Such a pull-back is
different from the vector bundle pull-back $\Phi^*\A$, and to define a
Courant algebroid structure on the latter we need a Courant algebroid
action $(\Phi,\varrho)$ with co-isotropic stabilizers. In Section
\ref{sec:man} we consider Lagrangian sub-bundles of Courant
algebroids.  We show that the Courant tensors of Lagrangian
sub-bundles behave naturally under `backward image', and give a simple
construction of the bivector $\pi$ for a Lagrangian splitting as the
backward image under a `diagonal morphism'. We give a formula for the
rank of $\pi$ and compute its Schouten bracket $[\pi,\pi]$. Finally,
we present a compatibility condition for Lagrangian splittings
relative to Courant morphisms, which guarantees that the underlying
map of manifolds is a bivector map.  Section \ref{sec:cou} specializes
the theory to Courant algebroids coming from actions of quadratic Lie
algebras, and puts the examples mentioned above into this framework.
In Section \ref{sec:split} we consider the Lagrangian splittings of
$M\times\dd$ coming from Lagrangian splittings of $\dd$. In the final
Section \ref{sec:poi} we show how to interpret the basic theory of
Poisson Lie groups from the Courant algebroids perspective.

\vskip.2in \noindent {\bf Acknowledgements.} We would like to thank
Jiang-Hua Lu for discussions, and for detailed comments on a first
version of this paper. We also thank Henrique Bursztyn, Yvette
Kosmann-Schwarzbach, and Pavol {\v{S}}evera for several helpful
remarks.  D.L.-B. was supported by an NSERC CGS-D Grant, and E.M. by
an NSERC Discovery Grant and a Steacie Fellowship.

\vskip.3in\section{Courant algebroids} \label{sec:prelim} 

\subsection{Definition of a Courant algebroid}\label{subsec:def}  
The notion of a \emph{Courant algebroid} was introduced by
Liu-Weinstein-Xu \cite{liu:ma} to provide an abstract framework for
Courant's theory of Dirac structures \cite{cou:di,dor:dir}. The original
definition was later simplified by Roytenberg \cite{roy:co} (based on
ideas of {\v{S}}evera \cite{sev:let}) and Uchino \cite{uch:rem} to the
following set of axioms. See Kosmann-Schwarzbach \cite{kos:qua} for a
slightly different version.

\begin{definition}
  A \emph{Courant algebroid} over a manifold $M$ is a quadruple
  $(\A,\,\l\cdot,\cdot\r,\,\mathsf{a},\,\Cour{\cdot,\cdot})$,
  consisting of a vector bundle $\A\to M$, a
  non-degenerate bilinear form (\emph{inner product})
  $\l\cdot,\cdot\r$ on the fibers of $\A$, a bundle map $\a\colon \A\to TM$
  called the \emph{anchor}, and a bilinear \emph{Courant bracket}
  $\Cour{\cdot,\cdot}$ on the space $\Gamma(\A)$ of sections, such
  that the following three axioms are satisfied:
\begin{enumerate}
\item[c1)] $\Cour{x_1,\Cour{x_2,x_3}}=\Cour{\Cour{x_1,x_2},x_3}
+\Cour{x_2,\Cour{x_1,x_3}}$, 
\item[c2)] $\a(x_1)\l x_2,x_3\r=\l \Cour{x_1,x_2},\,x_3\r+\l x_2,\,\Cour{x_1,x_3}\r$,
\item[c3)] $\Cour{x_1,x_2}+\Cour{x_2,x_1}=\a^*(\d \l x_1,x_2\r)$,
\end{enumerate}
for all sections $x_1,x_2,x_3\in\Gamma(\A)$. Here $\a^*$ is the dual
map $\a^*\colon T^*M\to \A^*\cong\A$, using the isomorphism given by
the inner product.
\end{definition}

Axioms c1) and c2) say that $\Cour{x,\cdot}$ is a derivation of both the
Courant bracket and of the inner product, while c3) relates the
symmetric part of the bracket with the inner product.  From the
axioms, one can derive the following properties of Courant algebroids:
\begin{enumerate}
\item[p1)] $\Cour{x_1,fx_2}=f\Cour{x_1,x_2}+(\a(x_1)f)x_2,\ $
\item[p2)] $\a(\Cour{x_1,x_2})=[\a(x_1),\a(x_2)],$
\item[p3)] $\a\circ\a^*=0$,
\item[p4)] $\Cour{x,\a^*(\mu)}=\a^*(\L_{\a(x)}\mu)$, 
\item[p5)] $\Cour{\a^*(\mu),x}=-\a^*(\iota_{\a(x)}\d\mu)$
\end{enumerate}
for $f\in C^\infty(M),\ \mu\in \Om^1(M)$, and with $\L_v,\iota_v$
denoting Lie derivative, contraction relative to a vector field $v$.
The properties p1) and p2) are sometimes included as part of the
axioms; their redundancy was first observed by Uchino \cite{uch:rem}. 
The basic examples are as follows: 

\begin{examples}
\begin{enumerate}
\item
A Courant algebroid over $M=\pt$ is just a \emph{quadratic Lie algebra},
i.e. a Lie algebra $\g$ with an invariant non-degenerate symmetric
bilinear form. 
\item \label{def:standard}
The \emph{standard Courant algebroid} $\TM$ over a
  manifold $M$ is the vector bundle $\TM=TM\oplus T^*M$, with
  $\a\colon \TM\to TM$ the
projection along $T^*M$, and with inner product and Courant bracket
\[ \begin{split}
\l (v_1,\mu_1),(v_2,\mu_2)\r&=\iota_{v_2}\mu_1+\iota_{v_1}\mu_2,\\ 
\Cour{(v_1,\mu_1),(v_2,\mu_2)}&
=([v_1,v_2],\,\L_{v_1}\mu_2-\iota_{v_2}\d\mu_1),\end{split}\]
for vector fields $v_i\in \mf{X}(M)$ and 1-forms $\mu_i\in\Om^1(M)$.
More generally, given a closed 3-form $\eta\in\Om^3(M)$ one defines a
Courant algebroid $\TM^{(\eta)}$, by adding an extra term
$(0,\iota_{v_2}\iota_{v_1}\eta)$ to the Courant bracket. 
By a result of \u{S}evera, a Courant algebroid $\A\to M$ is isomorphic to 
$TM^{(\eta)}$ if and only if it is \emph{exact}, in the sense that 
the sequence
\[ 0\to T^*M\xra{\a^*} \A \xra{\a} TM\to 0\]
is exact. 
\end{enumerate}
\end{examples}

\subsection{Dirac structures}\label{subsec:dirac}
A subbundle $E\subset \A$ of a Courant algebroid is called a
\emph{Dirac structure} if it is \emph{Lagrangian} (i.e. $E^\perp=E$)
and if its space $\Gamma(E)$ of sections is closed under the Courant
bracket. This then implies that $E$, with the restriction of the
Courant bracket and the anchor map, is a Lie algebroid. 
For any Dirac structure, the generalized
distribution $\a(E)\subset TM$ is integrable in the sense of
Stefan-Sussmann, hence it defines a generalized foliation of $M$.

The lack of integrability of a given Lagrangian subbundle $E\subset
\A$ is measured by the \emph{Courant tensor} $\Upsilon^E\in
\Gamma(\wedge^3 E^*)$,
\[ \Upsilon^E(x_1,x_2,x_3)=
\l x_1, \Cour{x_2,x_3}\r,\ \ x_i\in\Gamma(E) .\]
(Using the Courant axioms, one checks that the right hand side is
tensorial and anti-symmetric in $x_1,x_2,x_3\in \Gamma(E)$.)

\begin{example}
  Let $\pi \in \mf{X}^2(M)=\Gamma(\wedge^2TM)$ be a bivector on
  $M$. Then the graph of $\pi$,
\[ \on{Gr}_\pi=\{(\iota_\mu\pi,\mu)|\ \mu\in T^*M\}\subset \TM\]
is a Lagrangian subbundle. Conversely, a Lagrangian subbundle of
$\TM$ is of the form $\on{Gr}_\pi$ if and only if its intersection
with $TM$ is trivial. Using the pairing of $TM$ and $\on{Gr}_\pi$
given by the inner product, we may identify $\on{Gr}_\pi^*\cong TM$
and hence view the Courant tensor of $\on{Gr}_\pi$ as a section of
$\wedge^3 TM$.  A direct calculation using the definition of
the Courant bracket on $\TM$ shows \cite{cou:di}
\begin{equation}\label{eq:courpo}
 \Upsilon^{\on{Gr}_\pi}=\hh [\pi,\pi]\end{equation}
where $[\pi,\pi]$ is the Schouten bracket of multi-vector fields. Thus
$\on{Gr}_\pi$ is a Dirac structure if and only if $\pi$ is a Poisson
bivector. 
\end{example}

\subsection{Courant morphisms}
Suppose $\A,\A'$ are Courant algebroids
over $M,M'$. We denote by $\ol{\A}$ the Courant algebroid $\A$ with 
the opposite inner product. Given a smooth map 
$\Phi\colon M\to M'$ let 
\[ \on{Graph}(\Phi)=\{(\Phi(m),m)|m\in M\}\subset M'\times M\]
be its graph. The notion of a Courant morphism is due to {\v{S}}evera
\cite{sev:so} (in terms of NQ manifolds) and Alekseev-Xu
\cite{al:der}, see also \cite{bur:cou,pon:ham}.
\begin{definition}
  Let $\A,\A'$ be Courant algebroids over $M,M'$. A \emph{Courant
    morphism} $R_\Phi\colon \A\da \A'$ is a smooth map $\Phi\colon
  M\to M'$ together with a subbundle
\[ R_\Phi\subset (\A'\times \bar{\A})|_{\on{Graph}(\Phi)}\]
with the following properties: 
\begin{enumerate}
\item[m1)] $R_\Phi$ is Lagrangian (i.e. $R_\Phi^\perp=R_\Phi$). 
\item[m2)] The image $(\a'\times\a)(R_\Phi)$ is tangent to the graph
      of $\Phi$. 
\item[m3)] If $x_1,x_2\in\Gamma(\A'\times\ol{\A})$ restrict to
      sections of $R_\Phi$, then so does their Courant bracket
      $\Cour{x_1,x_2}$.
\end{enumerate}
\end{definition}
The composition of Courant morphisms is given as the fiberwise
composition of relations: $R_{\Psi\circ\Phi}=R_\Psi\circ R_\Phi$. (One
imposes the usual transversality conditions to ensure that this
composition is a smooth subbundle.)  For a Courant morphism
$R_\Phi\colon \A\da \A'$, and elements $x\in\A,\ x'\in \A'$, we write
$x\sim_{R_\Phi} x'$ if $(x',x)\in R_\phi$.  Similarly, for
$x\in\Gamma(\A),x'\in\Gamma(\A')$ we write $x\sim_{R_\Phi} x'$ if
$x'\times x$ restricts to a section of $R_\Phi$.  The definition of a
Courant morphism shows that for all sections $x_i\in \Gamma(\A),\ x_i'\in\Gamma(\A')$
\[ x_1\sim_{R_\Phi} x_1',\ x_2\sim_{R_\Phi} x_2'\ \ \Rightarrow\ \ \Cour{x_1,x_2}\sim_{R_\Phi}
\Cour{x_1',x_2'},\ \ \l
x_1,x_2\r=\Phi^* \l x_1',x_2'\r.\]
From now on, we will often describe the subbundle $R_\Phi$ in terms
of the relation $\sim_{R_\Phi}$.

\begin{examples}\label{ex:examples}
\begin{enumerate}
\item An automorphism of a Courant algebroid is an invertible morphism
      $R_\Phi\colon \A\da \A$,  
with $\Phi=\on{id}_M$ the identity map on $M$.
\item
A morphism $\A\da 0$ to the zero Courant algebroid over a point $\pt$
is the same thing as a \emph{Dirac structure} in $\A$.   
\item
Suppose $\Phi\colon M\to M'$ is a given map, $\A,\A'$ are Courant
algebroids over $M,M'$, and $R\subset \A'\times\bar{\A}$ is a Dirac
structure whose foliation is tangent to the graph of $\Phi$. Then the
restriction $R_\Phi=R|_{\on{Graph}(\Phi)}$ is a Courant morphism.
However, not every Courant morphism arises in this way: For instance,
the bracket on $\Gamma(R_\Phi)$ need not be skew-symmetric. 
\item
Associated to any smooth map $\Phi\colon M\to M'$ is a \emph{standard
morphism} $R_\Phi\colon \TM\da \TM'$, where 
\[ (v,\mu)\sim_{R_\Phi} (v',\mu')\ \Leftrightarrow \ v'=\Phi_*v,\ \mu=\Phi^*\mu'.\]
%
%
\item 
If $\g,\g'$ are quadratic Lie algebras (viewed as Courant algebroids
over a point), a Courant morphism $R\colon \g\da \g'$ is a
Lagrangian subalgebra of $\g'\oplus \ol{\g}$.  
\end{enumerate}
\end{examples}
Another example is described in the
following Proposition.
\begin{proposition}[Diagonal morphism]\label{prop:diagonal}
Let $\A\to M$ be any Courant algebroid, and $\diag\colon M\to M\times
M$ the diagonal embedding.  There is a  Courant morphism
\[ R_{\on{diag}}\colon \TM\da \A\times\ol{\A}\] 
given by 
\[ (v,\mu)\sim_{R_\diag} (x,y)\ \Leftrightarrow\ v=\a(x),\ x-y=\a^*\mu.\]
%
%
\end{proposition}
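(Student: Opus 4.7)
The strategy is to parametrize sections of the candidate subbundle $R_{\diag}\subset (\A\times\ol\A\times\ol{\T M})|_{\on{Graph}(\diag)}$ by pairs $(x,\mu)\in \Gamma(\A)\times\Om^1(M)$ via
\[ s=(x,\,x-\a^*\mu,\,\a(x),\,\mu), \]
and then verify axioms m1)--m3) by direct computation, the key inputs being properties p2), p3), p4), p5) of the Courant algebroid $\A$.

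First I would count ranks to see that $R_{\diag}$ has the correct half-dimension for a Lagrangian subbundle: given $x$ and $\mu$, the components $y=x-\a^*\mu$ and $v=\a(x)$ are determined, so $\rk R_{\diag}=\rk\A+\dim M=\tfrac12(2\rk\A+2\dim M)$. To check the isotropy condition for m1), take two elements $(x_i,y_i,v_i,\mu_i)$ in the fiber and expand
\[ \l x_1,x_2\r-\l y_1,y_2\r-\iota_{v_2}\mu_1-\iota_{v_1}\mu_2; \]
substituting $y_i=x_i-\a^*\mu_i$, using $\l x,\a^*\mu\r=\iota_{\a(x)}\mu$ (definition of $\a^*$) and $\l\a^*\mu_1,\a^*\mu_2\r=\iota_{\a(\a^*\mu_1)}\mu_2=0$ (property p3), shows the sum vanishes. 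Combined with the dimension count this gives m1). For m2), the image of $(x,y,v,\mu)$ under $\a'\times\a=(\a_\A,\a_{\ol\A},\mathrm{pr}_{TM})$ is $(\a(x),\a(y),v)$; since $\a(y)=\a(x)-\a(\a^*\mu)=\a(x)=v$ by p3), this triple is tangent to the graph of $\diag$.

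The main point is m3), but it reduces cleanly to the properties listed in the excerpt. For two sections $s_i=(x_i,x_i-\a^*\mu_i,\a(x_i),\mu_i)$, the Courant bracket on the product $\A\times\ol\A\times\ol{\T M}$ is computed componentwise. Using p4), p5), and $\a\circ\a^*=0$, I would expand
\[ \Cour{x_1-\a^*\mu_1,\,x_2-\a^*\mu_2}=\Cour{x_1,x_2}-\a^*\bigl(\L_{\a(x_1)}\mu_2-\iota_{\a(x_2)}\d\mu_1\bigr), \]
which is precisely $\Cour{x_1,x_2}-\a^*\nu$ with $\nu=\L_{v_1}\mu_2-\iota_{v_2}\d\mu_1$ the $\Om^1(M)$-component of the standard Courant bracket of $(v_1,\mu_1)$ and $(v_2,\mu_2)$. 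Moreover, by p2), the anchor of $\Cour{x_1,x_2}$ equals $[\a(x_1),\a(x_2)]=[v_1,v_2]$, which is the vector field component of the standard bracket. Thus $\Cour{s_1,s_2}$ has exactly the parametrized form $(x,x-\a^*\nu,\a(x),\nu)$ with $x=\Cour{x_1,x_2}$, so it is again a section of $R_{\diag}$.

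The potential obstacle is a convention issue about the Courant bracket on $\ol\A$: changing the sign of $\l\cdot,\cdot\r$ also flips $\a^*$, so the bracket and the operators $\a^*\d,\a^*\L,\a^*\iota\d$ are unchanged, which is what makes the computation above legitimate. Once this is noted, the verification is routine and the proposition follows.
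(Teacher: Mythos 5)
Your proposal is correct and follows essentially the same route as the paper's proof: the same parametrization $(x,\mu)\mapsto(x,\,x-\a^*\mu,\,\a(x),\,\mu)$ of $R_{\diag}$, isotropy plus a rank count for m1), the observation $\a(\a^*\mu)=0$ for m2), and the componentwise bracket computation via p4), p5) for m3). The only addition is your explicit remark on the bracket convention for $\ol{\A}$, which the paper leaves tacit.
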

\begin{proof}
  Write $f(x,\mu)=(x,x-\a^*(\mu),\a(x),\mu)$, so that $R_\diag$ is
  spanned by the elements $f(x,\mu)$ with $(x,\mu)\in \A\oplus T^*M$.
  The subbundle $R_{\on{diag}}$ is Lagrangian since
\[ \l f(x,\mu),\ f(x,\mu)\r=\l x,x\r-\l x-\a^*(\mu),x-\a^*(\mu)\r-2\iota_{\a(x)}\mu=0 .\]
Using p3), we see that the image
of $f(x,\mu)$ under the anchor map for $\A\times\ol{\A\times \TM}$ is 
$(\a(x),\a(x),\a(x))$ which is tangent to the graph of $\diag$. 
Now let $x_1,x_2\in \Gamma(\A)$ and $\mu_1,\mu_2\in
\Gamma(T^*M)$. Then $f(x_i,\mu_i)$ are sections of 
$\A\times\ol{\A \times\TM}$ that restrict to sections of
$R_\diag$. Using p4), p5) we obtain 
\[ \Cour{f(x_1,\mu_1),\ f(x_2,\mu_2)}=f(x,\mu)\]
with $x=\Cour{x_1,x_2},\ \ 
\mu=\L_{\a(x_1)}\mu_2-\iota_{\a(x_2)}\d\mu_1$. Hence, the Courant
bracket again restricts to a section of $R_\diag$.  This completes the
proof (cf. \cite[Remark 2.5]{bur:cou}).
\end{proof}

\section{Constructions with Courant algebroids}\label{sec:con}
In this section we describe three constructions involving Courant
algebroids. We begin by discussing a reduction procedure relative to
co-isotropic subbundles. We then use reduction to define restrictions
and pull-backs $\Phi^!\A$ of Courant algebroids. Finally, we introduce the
notion of a Courant algebroid action, and describe conditions under
which the usual pull-back as a vector bundle $\Phi^*\A$ acquires the
structure of a Courant algebroid.
\subsection{Coisotropic reduction}
The reduction procedure is frequently used to obtain new Courant
algebroids out of old ones. See \cite{bur:red} and \cite{zam:red} for
similar constructions for the case of exact Courant algebroids. 
\begin{proposition}\label{prop:cois}
Let $S\subset M$ be a submanifold, and $C\subset \A|_S$ a subbundle
such that 
\begin{enumerate}
\item[r1)] $C$ is co-isotropic (i.e. $C^\perp\subset C$), 
\item[r2)] $\a(C)\subset TS,\ \a(C^\perp)=0$,  
\item[r3)] if $x,y\in \Gamma(\A)$ restrict to sections of $C$, then so does their Courant bracket. 
\end{enumerate}
Then the anchor map, bracket and inner product on $C$ descend to
$\A_C=C/C^\perp$, and make $\A_C$ into a Courant algebroid over $S$.
The inclusion $\Phi\colon S\hra M$ lifts to a  
Courant   morphism 
\[ R_\Phi\colon \A_C\da \A,\ \  y\sim_{R_\Phi} x\ \Leftrightarrow\ x\in C,\ y=q(x)\]
where $q\colon C\to \A_C$ is the quotient map. 
\end{proposition}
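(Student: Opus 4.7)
The plan is to show in turn that the inner product, anchor, and Courant bracket descend to $\A_C=C/C^\perp$; verify that the resulting quadruple satisfies the Courant axioms c1)--c3); and finally check the three conditions m1)--m3) for $R_\Phi$ to be a Courant morphism. The inner product descends because r1) identifies $C^\perp$ with the radical of the restriction to $C$, making the quotient form nondegenerate. The anchor descends by r2), yielding $\a_C\colon \A_C\to TS$. A preliminary observation is that $\a^*(N^*S)\subset C^\perp$, since for any $\xi\in N^*S$ and $c\in C$ one has $\l \a^*(\xi),c\r=\l \xi,\a(c)\r=0$ by $\a(C)\subset TS$.

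For the bracket, given $\sigma_1,\sigma_2\in\Gamma(\A_C)$, the idea is to choose lifts $\tilde\sigma_i\in\Gamma(C)$ (via a local smooth splitting of $q$, patched with a partition of unity), then extend to sections $x_i\in\Gamma(\A)$ on a neighborhood of $S$ with $x_i|_S=\tilde\sigma_i$. By r3), $\Cour{x_1,x_2}|_S$ lies in $C$, so one can set $\Cour{\sigma_1,\sigma_2}_C:=q(\Cour{x_1,x_2}|_S)$. The main obstacle is showing this is independent of all choices. Two types of ambiguity arise. First, replacing $x_i$ by another extension of $\tilde\sigma_i$: by p1) and c3) the difference in $\Cour{x_1,x_2}|_S$ involves terms of the form $(\a(x_1)f)x_2$ and $\a^*(\d\l x_1,fx_2\r)$ with $f|_S=0$; the first vanishes on $S$ because $\a(x_1)|_S\in TS$, while the second restricts to $\a^*(N^*S)\subset C^\perp$. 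Second, replacing $\tilde\sigma_1$ by $\tilde\sigma_1+\tau$ with $\tau\in\Gamma(C^\perp)$: it suffices to show $\Cour{\tilde\tau,x_2}|_S\in C^\perp$ for any extension $\tilde\tau\in\Gamma(\A)$ of $\tau$. Using c3), $\langle \tilde\tau,x_2\rangle|_S=0$ implies $\d\langle\tilde\tau,x_2\rangle|_S\in N^*S$, so modulo $C^\perp$ it suffices to show $\Cour{x_2,\tilde\tau}|_S\perp C$. For any test section $y\in\Gamma(\A)$ with $y|_S\in C$, axiom c2) gives $\a(x_2)\l \tilde\tau,y\r=\l \Cour{x_2,\tilde\tau},y\r+\l \tilde\tau,\Cour{x_2,y}\r$; restriction to $S$ kills the LHS since $\l\tilde\tau|_S,y|_S\r=0$, and kills the last term by r3) combined with $\tilde\tau|_S\in C^\perp$, forcing $\l \Cour{x_2,\tilde\tau}|_S,y|_S\r=0$ for all such $y$, as required.

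Once the bracket is well-defined, the Courant axioms c1)--c3) and the derivation property p1) for $\A_C$ follow by lifting sections and functions from $S$ to $M$, invoking the corresponding axioms in $\A$, restricting to $S$, and projecting under $q$; for c3) one uses again that $\a^*(N^*S)\subset C^\perp$ to identify $\a^*(\d\l x_1,x_2\r)|_S$ with $\a_C^*(\d\l \sigma_1,\sigma_2\r_C)$ modulo $C^\perp$. For the Courant morphism $R_\Phi$: m1) holds because $R_\Phi$ is parametrized by $C$, giving $\rk(R_\Phi)=\rk(C)=\hh(\rk(\A)+\rk(\A_C))$ and isotropy by construction of the inner product on $\A_C$; m2) is immediate since the anchor image at $(x,q(x))$ is $(\a(x),\a(x))$, tangent to the graph of $\Phi$ because $\a(x)\in TS$; m3) follows from the construction of the descended bracket, since sections $(x_i,\sigma_i)$ restricting to $R_\Phi$ satisfy $x_i|_S\in C$ and $\sigma_i|_S=q(x_i|_S)$, whence their product Courant bracket restricts to $(\Cour{x_1,x_2}|_S,\,q(\Cour{x_1,x_2}|_S))$, again a section of $R_\Phi$.
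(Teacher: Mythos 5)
Your proof is correct and takes essentially the same route as the paper: descend the inner product and anchor via r1)--r2), define the bracket by $\Cour{q(x|_S),q(y|_S)}=q(\Cour{x,y}|_S)$, and prove well-definedness by showing that brackets with sections restricting into $C^\perp$ land in $C^\perp$ along $S$, using c2) for the second slot and c3) to reduce the first slot to that case. Your additional checks (extension-independence, $\a^*(\on{ann}(TS))\subset C^\perp$, and the verification of the morphism axioms) merely spell out details the paper leaves implicit.
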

\begin{proof}
  By r1),r2) the inner product and the anchor map descend to $\A_C$.
We want to define the bracket on $\A_C$ by 
\[ \Cour{q(x|_S),q(y|_S)}=q(\Cour{x,y}|_S),\]
for any sections $x,y\in\Gamma(\A)$ that restrict to sections of
$C$. To see that this is well-defined, we must show that the 
right hand side vanishes if $q(x|_S)=0$ or if $q(y|_S)=0$. 
Equivalently, letting  $x,y,z\in \Gamma(\A)$ with
  $x|_S,\ y|_S,\ z|_S\in\Gamma(C)$, we must show that $ \l
  \Cour{x,y},z\r$ vanishes on $S$ if one of the two sections $x|_S,\ y|_S$
  takes values in $C^\perp$. (i) Suppose $y|_S\in \Gamma(C^\perp)$. Then
\[ \l \Cour{x,y},z\r=\a(x)\l y,z\r-\l y,\Cour{x,z}\r\]
vanishes on $S$, since $\l y,z\r|_S=0$ and $\a(x)$ is
tangent to $S$. (ii) Suppose $x|_S\in \Gamma(C^\perp)$. Then 
\[ \l \Cour{x,y},z\r = -\l \Cour{y,x},z\r +\l \d\l x,y\r,\a(z)\r\]
vanishes on $S$: The first term vanishes by (i), and the second term
vanishes since $\l x,y\r|_S=0$ and consequently $\d\l x,y\r|_S\in
(\Gamma(\on{ann}(TS))$. The Courant axioms for $\A_C$ follow from the
Courant axioms for $\A$.
\end{proof}

\begin{example}
  Suppose $\ker(\a)=C$ is a smooth subbundle of $\A$, as happens for
  example if the anchor map $\a$ is surjective. Then
  $C^\perp=\ran(\a^*)$, and $\A_C$ is a Courant algebroid with trivial
  anchor map. That is, $\A_C$ is simply a bundle of quadratic Lie
  algebras.
\end{example}

\begin{remark}
%
%
  Assume $S=M$, and let $C\subset\A$ be as in the Proposition. Write
  $R=R_\Phi$ where $\Phi=\on{id}_M$. Let $R^t\colon \A\da \A_C$ be the
  transpose of $R$, i.e.  $(y,x)\in R^t \Leftrightarrow (x,y)\in R$.
  Then $R^t\circ R$ is the identity morphism of $\A_C$, while the
  composition $T=R\circ R^t\colon \A\da \A$ satisfies $T\circ T=T$.
  Explicitly,
\[ x\sim_T x'\ \Leftrightarrow\ x,x'\in C,\ q(x)=q(x').\]
This is parallel to a construction in symplectic geometry, see 
Guillemin-Sternberg \cite{gui:lin}.  
\end{remark}

\subsection{Pull-backs of  Courant algebroids}\label{subsec:pull}
Let $\A\to M$ be a given Courant algebroid, with anchor map $\a\colon
\A\to TM$.
\begin{proposition}\label{prop:restrict}
Suppose $S\subset M$ is an embedded submanifold, and assume that 
 $\a$ is
transverse to $TS$, in the sense that
\begin{equation}\label{eq:transvert}
 TM|_S=TS+\ran(\a)|_S.\end{equation}
Then 
\begin{equation}
 \A_S:=\a^{-1}(TS)/\a^*(\on{ann}(TS\cap
\on{ran}(\a)))\end{equation}
is a Courant algebroid $\A_S\to S$ called the \emph{restriction of $\A$ 
to $S$}. One has $\rk(\A_S)=\rk(\A)-2\dim
M+2\dim S$. The inclusion $\Phi\colon S\hra M$ lifts to a Courant
morphism $\A_S\da \A$.
\end{proposition}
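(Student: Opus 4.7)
My plan is to realize $\A_S$ as the coisotropic reduction (Proposition \ref{prop:cois}) of the subbundle
\[ C:=\a^{-1}(TS)\subset \A|_S.\]
The main preliminary task is to check, using the transversality hypothesis \eqref{eq:transvert}, that $C$ and $C^\perp$ are honest subbundles of $\A|_S$ of the right ranks, and to identify $C^\perp$ with the denominator in the definition of $\A_S$.

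For the subbundle property of $C$, I would factor $\a$ as $\A|_S\xra{\a} TM|_S\to NS=TM|_S/TS$. Transversality says exactly that this composition is fiberwise surjective, hence of constant rank, so $C$ is its kernel and a subbundle, of rank $\rk\A-(\dim M-\dim S)$. For $C^\perp$, the standard linear algebra identity $(\a^{-1}(TS))^\perp=\a^*(\on{ann}(TS\cap\ran\a))$ (combined with the fact that $\a^*$ descends from $T^*M$ to $(\ran\a)^*$) gives both
\[ C^\perp=\a^*(\on{ann}(TS\cap\ran\a))=\a^*(\on{ann}(TS)).\]
Here the second equality holds because in $(\ran\a)^*$ the restrictions of $\on{ann}(TS)$ and $\on{ann}(TS\cap\ran\a)$ agree. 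Restricting $\a^*$ to the conormal bundle $\on{ann}(TS)$ has kernel $\on{ann}(TS)\cap\on{ann}(\ran\a)=\on{ann}(TS+\ran\a)=0$ by transversality, so $C^\perp$ is a subbundle of rank $\dim M-\dim S$.

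Next I would verify the three hypotheses of Proposition \ref{prop:cois}: \emph{(r1)} coisotropy, i.e.\ $C^\perp\subset C$, follows from $\a\circ\a^*=0$ (property p3), which gives $C^\perp\subset\ker\a\subset\a^{-1}(TS)=C$; \emph{(r2)} $\a(C)\subset TS$ is the definition of $C$, and $\a(C^\perp)=0$ was just observed; \emph{(r3)} bracket closure: given $x,y\in\Gamma(\A)$ defined near $S$ with $x|_S,y|_S\in\Gamma(C)$, property p2 gives $\a(\Cour{x,y})=[\a(x),\a(y)]$, and a direct computation in local coordinates adapted to $S$ shows that two vector fields on $M$ whose restrictions to $S$ lie in $TS$ have Lie bracket whose restriction to $S$ again lies in $TS$. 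Hence $\Cour{x,y}|_S\in\Gamma(C)$.

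Proposition \ref{prop:cois} then immediately yields the Courant algebroid $\A_S=C/C^\perp$ over $S$ and the lifted morphism $\A_S\dra \A$. The rank formula drops out:
\[\rk(\A_S)=\rk(C)-\rk(C^\perp)=\bigl(\rk\A-\dim M+\dim S\bigr)-\bigl(\dim M-\dim S\bigr)=\rk\A-2\dim M+2\dim S.\]
The only place where anything substantive happens is the identification of $C^\perp$ as a subbundle of the correct rank; the rest of the argument is a direct application of the coisotropic reduction framework already established. The subtlest point is the elementary but easily-overlooked fact that $\a^*(\on{ann}(TS\cap\ran\a))=\a^*(\on{ann}(TS))$, which is what guarantees constant rank for $C^\perp$ even when $\ran\a$ has varying dimension.
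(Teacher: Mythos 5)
Your proof is correct and follows essentially the same route as the paper: coisotropic reduction (Proposition \ref{prop:cois}) applied to $C=\a^{-1}(TS)$, with r1)--r3) verified via $C^\perp\subset\ran(\a^*)\subset\ker(\a)\subset C$, p2) plus tangency of brackets of vector fields tangent to $S$, the same identification $C^\perp=\a^*(\on{ann}(TS\cap\ran\a))$, and the same rank count. The only (correct) extra touch is your observation that $C^\perp=\a^*(\on{ann}(TS))$ is the injective image of the conormal bundle, which gives smoothness and $\rk(C^\perp)=\dim M-\dim S$ directly rather than by the paper's count $\rk(C^\perp)=\rk(\A)-\rk(C)$.
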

Of course, the restriction $\A_S$ as a Courant algebroid is different
from the restriction $\A|_S$ as a vector bundle.
\begin{proof}
  The transversality condition ensures that $C=\a^{-1}(TS)$ is a smooth
  subbundle. We
  verify the conditions from Proposition \ref{prop:cois}: r1) and r2)
  follow since $\ker(\a)\subset C$, hence
\[ C^\perp\subset
\ker(\a)^\perp=\ran(\a^*)\subset \ker(\a)\subset C.\] 
Condition r3) follows from p2), and since the Lie bracket of two
vector fields tangent to $S$ is again tangent to $S$.  To describe
$C^\perp$, note that $\a^*\mu$ is orthogonal to $C$ if and only if
$\mu$ annihilates all elements of $\a(C)=TS\cap \ran(\a)$.  Hence
$C^\perp= \a^*(\on{ann}(TS\cap \on{ran}(\a)))$. Since $C$ is the
kernel of a surjective map $\A|_S\oplus TS\to TM|_S$, one obtains
$\rk(C)=\rk(\A)-\dim M+\dim S$. Consequently $\rk(C^\perp)=\dim M-\dim
S$, and the formula for $\rk(\A_C)$ follows.
\end{proof}

\begin{remark}
  The transversality condition \eqref{eq:transvert} may be replaced by
  the weaker assumption that $C$ is a smooth subbundle. Note also that
  the transversality condition is automatic if $\ran(\a)=TM$, e.g. for
  exact Courant algebroids.  For this case, restriction of Courant
  algebroids is discussed in \cite[Lemma 3.7]{bur:red} and in
  \cite[Appendix]{gua:ge2}.
\end{remark}

Restriction to submanifolds generalizes to pull-back under maps: 
\begin{definition}
Suppose $\Phi\colon S\to M$ is a smooth map whose
differential $\d\Phi\colon TS\to TM$ is transverse to $\a\colon \A\to
TM$.  We define a \emph{pull-back Courant algebroid} $\Phi^!\A\to S$ by
restricting the direct product $\A\times \TS \to M\times S$ to the
graph $\on{Graph}(\Phi)\cong S$. (The shriek notation is used to
distinguish $\Phi^!A$ from the pull-back as a vector bundle.)
\end{definition}
Explicitly, we have the following description: 

\begin{proposition}
The pull-back Courant algebroid is a quotient $\Phi^!\A=C/C^\perp$ 
where 
\[\begin{split} C&=\{(x;v,\mu)\in \A\times \T S,\ (\d\Phi)(v)=\a(x)
  \}.\\ 
C^\perp&=\{(\a^*\lambda;0,\nu)|\  
(\d\Phi)^*\lambda+\nu\in \on{ann}((\d\Phi)^{-1}\ran(\a))\}.\end{split}\]
One has $\rk(\Phi^!\A)=\rk(\A)-2(\dim M-\dim S)$. 
\end{proposition}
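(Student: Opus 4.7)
The plan is to unwind the definition of $\Phi^!\A$ as the restriction of the product Courant algebroid $\A\times\TS\to M\times S$ along the submanifold $\on{Graph}(\Phi)\subset M\times S$, and then apply Proposition \ref{prop:restrict}. Writing $\mathsf{a}_{\on{tot}}=\a\oplus\mathsf{a}_{\TS}$ for the anchor of $\A\times\TS$, the first task is to check the transversality hypothesis \eqref{eq:transvert}. Identifying $T\on{Graph}(\Phi)$ with $\{(\d\Phi(v),v)\mid v\in TS\}$ and using $\ran(\mathsf{a}_{\on{tot}})=\ran(\a)\oplus TS$, every $(w,u)\in TM\oplus TS$ decomposes as $(\d\Phi(u),u)+(w-\d\Phi(u),0)$, and the assumed transversality $TM=\d\Phi(TS)+\ran(\a)$ lets us write $w-\d\Phi(u)\in\ran(\a)$; so $\mathsf{a}_{\on{tot}}$ is transverse to $T\on{Graph}(\Phi)$.

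Next I compute the co-isotropic subbundle $C=\mathsf{a}_{\on{tot}}^{-1}(T\on{Graph}(\Phi))$. An element $(x;v,\mu)$ of $\A\times\TS$ has anchor image $(\a(x),v)$, which lies in $T\on{Graph}(\Phi)$ exactly when $\a(x)=\d\Phi(v)$; this is the stated formula for $C$.

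For $C^\perp$, I first identify $\mathsf{a}_{\on{tot}}^*\colon T^*M\oplus T^*S\to \A\oplus TS\oplus T^*S$. Pairing $\mathsf{a}_{\on{tot}}^*(\lambda,\nu)$ with an arbitrary $(x;v,\mu)$ via the product inner product and comparing with $\langle(\lambda,\nu),(\a(x),v)\rangle=\iota_{\a(x)}\lambda+\iota_v\nu$ yields $\mathsf{a}_{\on{tot}}^*(\lambda,\nu)=(\a^*\lambda,0,\nu)$. Moreover, $T\on{Graph}(\Phi)\cap\ran(\mathsf{a}_{\on{tot}})$ consists of pairs $(\d\Phi(v),v)$ with $v\in(\d\Phi)^{-1}\ran(\a)$, so $(\lambda,\nu)$ annihilates it iff $\iota_v\bigl((\d\Phi)^*\lambda+\nu\bigr)=0$ for all such $v$, i.e.\ $(\d\Phi)^*\lambda+\nu\in\on{ann}((\d\Phi)^{-1}\ran(\a))$. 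Applying $\mathsf{a}_{\on{tot}}^*$ gives precisely the stated description of $C^\perp$.

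Finally, the rank formula follows directly from the rank statement in Proposition \ref{prop:restrict}:
\[
\rk(\Phi^!\A)=\rk(\A\times\TS)-2\dim(M\times S)+2\dim\on{Graph}(\Phi)
=\rk(\A)+2\dim S-2\dim M,
\]
which is $\rk(\A)-2(\dim M-\dim S)$. The only mildly delicate point is the bookkeeping for $\mathsf{a}_{\on{tot}}^*$ and the characterization of the annihilator, but once those are written out the identifications are forced; there is no real obstacle beyond a careful pairing computation.
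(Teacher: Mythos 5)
Your argument is correct and follows essentially the paper's own route: realize $\Phi^!\A$ as the restriction of $\A\times\TS$ to $\on{Graph}(\Phi)$ via Proposition \ref{prop:restrict}, observe that $C^\perp$ lies in the range of the dual of the total anchor so that its elements have the form $(\a^*\lambda;0,\nu)$, and extract the condition $(\d\Phi)^*\lambda+\nu\in\on{ann}((\d\Phi)^{-1}\ran(\a))$ by the same pairing computation; the rank count is likewise identical. One small slip in your (extra) transversality check: from the decomposition $(w,u)=(\d\Phi(u),u)+(w-\d\Phi(u),0)$ you cannot conclude $w-\d\Phi(u)\in\ran(\a)$ — transversality of $\d\Phi$ and $\a$ only gives $w-\d\Phi(u)=\d\Phi(v')+\a(x)$ for some $v',x$. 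The fix is immediate: write $w=\d\Phi(v)+\a(x)$ and decompose $(w,u)=(\d\Phi(v),v)+(\a(x),u-v)$, whose first term is tangent to the graph and whose second term lies in $\ran(\a)\oplus TS=\ran(\a_{\on{tot}})$; with that one-line repair (a verification the paper leaves implicit) your proof is complete.
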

\begin{proof}
This is just a special case of Proposition \ref{prop:restrict}. Let us
nevertheless give some details on the computation of $C^\perp$. 
As in the proof of Proposition \ref{prop:restrict}, $C^\perp$ is 
contained the range of the dual of the anchor map. 
Hence, elements of $C^\perp$ are of the form $(\a^*\lambda,(0,\nu))$
with $\lambda\in T^*M$ and $\nu\in T^*S$. Pairing with $(x;v,\mu)\in
C$ we obtain the condition 
\[\begin{split}
0&=\l (\a^*\lambda;0,\nu),\ (x;v,\mu)\r\\
&=\l\lambda,\a(x)\r+\l\nu,v\r\\
&=\l\lambda,\d\Phi(v)\r+\l\nu,v\r\\
&=\l (\d\Phi)^*\lambda+\nu,v\r.\end{split}\]
Thus, $(\a^*\lambda;0,\nu)\in C^\perp$ if and only if
$(\d\Phi)^*\lambda+\nu\in T^*S$ annihilates all $v\in TS$ with 
$\d\Phi(v)\in \ran(\a)$. 
\end{proof}
Note that for $\ran(\a)=TM$ (e.g. exact Courant algebroids) the
description of $C^\perp$ simplifies to the condition
$(\d\Phi)^*\lambda+\nu=0$.

\begin{proposition}
  If $\Phi\colon S\to M$ is an embedding, with $\a$ transverse to
  $TS$, then the pull-back $\Phi^!\A$ is canonically isomorphic to the
  restriction of $\A$ to $S$.
\end{proposition}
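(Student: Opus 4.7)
The plan is to exhibit a canonical fibrewise isomorphism
\[ \bar\psi \colon \A_S \to \Phi^!\A,\qquad [x] \mapsto [(x;\a(x),0)], \]
where $\d\Phi\colon TS\hookrightarrow TM|_S$ is used to view $\a(x)\in TS$ as a vector in $TS$. The embedding hypothesis makes this natural: for $x\in \a^{-1}(TS)$ there is a unique $v\in TS$ with $\d\Phi(v)=\a(x)$, so $(x;v,0)$ lies automatically in the subbundle $C$ defining $\Phi^!\A$.

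I would first check that $\bar\psi$ descends from $\a^{-1}(TS)$ to the quotient $\A_S=\a^{-1}(TS)/\a^*(\on{ann}(TS\cap\ran(\a)))$. If $\lambda\in\on{ann}(TS\cap\ran(\a))$, then $\a(\a^*\lambda)=0$ by p3), so $\bar\psi(\a^*\lambda)$ is the class of $(\a^*\lambda;0,0)$, and membership of this tuple in $C^\perp$ is exactly the condition $(\d\Phi)^*\lambda\in\on{ann}((\d\Phi)^{-1}\ran(\a))$, which matches $\lambda\in\on{ann}(TS\cap\ran(\a))$ under the identification $TS\subset TM|_S$. Injectivity is the same calculation run in reverse: if $(x;\a(x),0)\in C^\perp$, then $\a(x)=0$ and $x=\a^*\tilde\lambda$ with $\tilde\lambda|_{TS\cap\ran(\a)}=0$, so $[x]=0$ in $\A_S$. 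Since $\rk(\A_S)=\rk(\A)-2(\dim M-\dim S)=\rk(\Phi^!\A)$, injectivity upgrades to a fibrewise isomorphism of vector bundles over $S$.

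Next I would verify compatibility with the three pieces of Courant structure. The inner product is immediate because the $T^*S$-components of $\bar\psi([x])$ vanish, giving $\l(x_1;\a(x_1),0),(x_2;\a(x_2),0)\r=\l x_1,x_2\r$. The anchor of $[(x;\a(x),0)]$ in $\Phi^!\A$ is its $TS$-component $\a(x)$, matching that of $[x]$ in $\A_S$. For the Courant bracket, given $x_1,x_2\in \Gamma(\A)$ restricting to sections of $\a^{-1}(TS)$ on $S$, I would lift to sections $X_i=x_i\times(V_i,0)$ of $\A\times\TS$, with $V_i\in\mf{X}(S)$ being $\Phi$-related to $\a(x_i)|_S$; these restrict to sections of $C$ over $\on{Graph}(\Phi)\cong S$. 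The Dorfman bracket on pairs $(V_i,0)$ reduces to $([V_1,V_2],0)$, and using p2) the restriction to $S$ of $\Cour{X_1,X_2}=\Cour{x_1,x_2}\times([V_1,V_2],0)$ has the form $(\Cour{x_1,x_2}|_S;\,\a(\Cour{x_1,x_2})|_S,\,0)$, which is exactly $\bar\psi$ applied to the $\A_S$-bracket $[\Cour{x_1,x_2}|_S]$.

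The main obstacle is the bookkeeping around the $\nu$-component in the general description of $C^\perp$: a priori the simple formula $\bar\psi([x])=[(x;\a(x),0)]$ only touches the $\nu=0$ slice of $C^\perp$, and one must confirm that this slice already captures the full kernel of the quotient. The rank equality together with the injectivity check ensures this, and once the identification is in place the Courant bracket compatibility is a formal consequence of p2) and the vanishing of the 1-form components in the chosen lifts.
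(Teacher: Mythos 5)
Your proposal is correct and follows essentially the same route as the paper: the same canonical map $x\mapsto(x;\a(x),0)$ from $\a^{-1}(TS)$ into $C$, the same identification of its intersection with $C^\perp$, the same rank count to get a vector bundle isomorphism, and the same bracket computation using p2) on sections with vanishing $1$-form component. The extra bookkeeping you do around the $\nu$-component of $C^\perp$ and the choice of $\Phi$-related lifts $V_i$ just makes explicit what the paper leaves implicit.
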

\begin{proof}
Recall that $\A_S=\a^{-1}(TS)/\a^*(\on{ann}(TS\cap
\on{ran}(\a)))$. The inclusion 
\[ \psi\colon \a^{-1}(TS) \to (\A\times \T S)|_{\on{Graph}(\Phi)},\ \ x\mapsto (x;\a(x),0),\]
takes values in $C$, with $ \psi(\a^{-1}(TS))\cap C^\perp=\psi(\a^*(\on{ann}(TS\cap
\on{ran}(\a))))$. 
The resulting inclusion $\A_S\to \Phi^!\A$ is an isomorphism of vector
bundles by dimension count, and it clearly preserves inner products. It is an isomorphism of Courant algebroids
since for all sections $x_1,x_2\in \Gamma(\A)$, such that $x_1|_S,\ 
x_2|_S$ take values in $\A_S$, the Courant bracket $x=\Cour{x_1,x_2}$
satisfies
\[\begin{split}
\Cour{(x_1;\a(x_1),0),(x_2;\a(x_2),0)}|_{\on{Graph}(\Phi)}
&=(\Cour{x_1,x_2};[\a(x_1),\a(x_2)],0)|_{\on{Graph}(\Phi)}\\
&=(\Cour{x_1,x_2};\a(\Cour{x_1,x_2}),0)|_{\on{Graph}(\Phi)}\\
&=\psi(\Cour{x_1,x_2}|_S)
.\qedhere\end{split}\]
\end{proof}

\begin{proposition}
For any smooth map $\Phi\colon S\to M$, one has a canonical
isomorphism 
\[ \Phi^!(\TM)=\T S.\]
\end{proposition}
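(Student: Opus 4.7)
The plan is to use the explicit description $\Phi^!\A = C/C^\perp$ from the previous proposition, specialize to $\A = \TM$, and exhibit an isomorphism $\psi\colon \TS \to \Phi^!\TM$ directly. Since the anchor of $\TM$ is the projection onto $TM$, which is surjective, the transversality hypothesis is automatic and $(\d\Phi)^{-1}\ran(\a) = TS$. Consequently
\[ C = \{((w,\lambda);v,\mu) \in (\TM\times\TS)|_{\on{Graph}(\Phi)} : w=\d\Phi(v)\}, \]
\[ C^\perp = \{((0,\lambda);0,-(\d\Phi)^*\lambda) : \lambda \in T^*_{\Phi(s)}M,\ s\in S\}, \]
and the rank count already forces $\rk(\Phi^!\TM) = 2\dim S$. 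I would then define $\psi(v,\mu) = [((\d\Phi(v),0);v,\mu)]$. Surjectivity is clear, since after subtracting a $C^\perp$-element one has $[((\d\Phi(v),\lambda);v,\mu)] = [((\d\Phi(v),0);v,\mu+(\d\Phi)^*\lambda)]$; injectivity follows by checking that $((\d\Phi(v),0);v,\mu) \in C^\perp$ forces $v=0$ and $\mu=0$.

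Next I verify compatibility with the Courant structures. A direct computation using the inner product on $\TM\times\TS$ yields
\[ \langle \psi(v_1,\mu_1),\psi(v_2,\mu_2)\rangle = \iota_{v_2}\mu_1 + \iota_{v_1}\mu_2, \]
matching the inner product on $\TS$. The anchor on $\Phi^!\TM$ is induced from the anchor of $\TM\times\TS$ composed with the tangent projection along $\on{Graph}(\Phi)\cong S$, and it sends $\psi(v,\mu)$ to $v$, as required. For the bracket, given sections $(v_i,\mu_i) \in \Gamma(\TS)$, I extend $\d\Phi\circ v_i$ locally to a vector field $V_i \in \mf{X}(M)$, so that $((V_i,0);v_i,\mu_i)$ is a section of $\TM\times\TS$ whose restriction to the graph is $\psi(v_i,\mu_i)$. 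The product Courant bracket formula gives
\[ \Cour{((V_1,0);v_1,\mu_1),((V_2,0);v_2,\mu_2)} = (([V_1,V_2],0);[v_1,v_2],\L_{v_1}\mu_2 - \iota_{v_2}\d\mu_1), \]
and since $V_i$ is $\Phi$-related to $v_i$ along $\Phi(S)$ by construction, $[V_1,V_2]|_{\Phi(S)} = \d\Phi([v_1,v_2])$. Restricting the bracket to $\on{Graph}(\Phi)$ therefore produces $\psi$ applied to the standard bracket of $(v_i,\mu_i)$ on $\TS$.

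The main obstacle is the extension step when $\Phi$ fails to be an immersion or is non-injective, in which case a global $V \in \mf{X}(M)$ with $V|_{\Phi(S)} = \d\Phi\circ v$ need not exist. I would deal with this by observing that the bracket on $\Phi^!\TM = C/C^\perp$ is well-defined on classes of sections independently of the chosen extension to $\TM\times\TS$: any two extensions $V,V'$ agreeing with $\d\Phi\circ v$ along $\Phi(S)$ differ by a vector field vanishing on $\Phi(S)$, and the discrepancy in the resulting bracket lies in $C^\perp$. Hence a purely local extension (e.g.\ obtained via a partition of unity around each $s\in S$) suffices to compute $\psi(\Cour{(v_1,\mu_1),(v_2,\mu_2)}_{\TS})$ pointwise, completing the proof that $\psi$ is a Courant algebroid isomorphism.
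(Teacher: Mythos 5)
You are on the paper's own route: your description of $C$ and $C^\perp$, the map $\psi(v,\mu)=[(\d\Phi(v),0;v,\mu)]$, and the checks of inner product and anchor are exactly the printed proof, which embeds $\TS$ in $C$ as the subbundle $\lambda=0$ complementary to $C^\perp$. The genuine gap is in the bracket verification, and it is precisely the point you flagged: your repair addresses uniqueness (independence of the chosen extension), not existence. For a general smooth $\Phi$ --- and the proposition assumes nothing more, since the anchor of $\TM$ is onto and transversality is automatic --- there need not exist any $V\in\mf{X}(M)$, even on arbitrarily small open subsets of $M$, with $V(\Phi(s))=\d\Phi_s(v(s))$ for all nearby $s$: the required value at $m=\Phi(s)$ depends on $s$, not only on $m$. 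Concretely, take $S=M=\R$, $\Phi(s)=s^2$, $v=\partial_s$; near $s=0$ one would need $V(s^2)=2s$ for small $s$ of both signs, which is impossible, although $\Phi^!\TM$ is perfectly well defined there. Partitions of unity (on $S$ or on $M$) cannot rescue the product form $(V,0;v,\mu)$, because modulo $C^\perp$ one may only alter the $T^*M$-component of a representative, never its $TM$-component, which along the graph is forced to equal $\d\Phi_s(v(s))$.

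The fix is to drop the product form: the sections of $\A\times\TS=\TM\times\TS$ over $M\times S$ entering the reduction of Proposition \ref{prop:cois} need not have their $TM$-part pulled back from $M$. Choose any section $W_i$ of $\pr_M^*TM$ over $M\times S$ with $W_i(\Phi(s),s)=\d\Phi_s(v_i(s))$ --- such an extension always exists because $\on{Graph}(\Phi)$ is an embedded submanifold --- and set $\sigma_i=(W_i,0;v_i,\mu_i)$, where $v_i$ and $\mu_i$ are regarded on $M\times S$ via $\pr_S$. Then $\sigma_i$ restricts along the graph to $\psi(v_i,\mu_i)$, and since $W_i$ is tangent to the $M$-factor, $\iota_{W_i}$ annihilates forms pulled back from $S$, so that in $\T(M\times S)\cong \TM\times\TS$ one finds
\begin{equation*}
\Cour{\sigma_1,\sigma_2}=\bigl([W_1+v_1,\,W_2+v_2],\ \pr_S^*(\L_{v_1}\mu_2-\iota_{v_2}\d\mu_1)\bigr).
\end{equation*}
Along the graph both anchor images $(\d\Phi(v_i),v_i)$ are tangent to the graph, hence so is their Lie bracket, whose $TS$-component is $[v_1,v_2]$; the restriction of $\Cour{\sigma_1,\sigma_2}$ is therefore $(\d\Phi([v_1,v_2]),0;[v_1,v_2],\L_{v_1}\mu_2-\iota_{v_2}\d\mu_1)=\psi\bigl(\Cour{(v_1,\mu_1),(v_2,\mu_2)}\bigr)$, which is exactly the bracket compatibility you need. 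With this replacement your argument is complete, and it coincides with the paper's (whose one-line assertion that the inclusion $\TS\to C$ preserves Courant brackets is implicitly this computation).
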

\begin{proof}
If $\A=\T M$ the bundle $C\subset (\T M\times \T S)|_{\on{Graph}(\Phi)}$ has the description, 
\[ C=\{(\d\Phi(v),\lambda;\,v,\mu)|\ \lambda\in T^*M,\  v\in TS\}.\]
The bundle $\T S$ is embedded in $C$ as the subbundle defined by $\lambda=0$,
and it defines a complement to the subbundle $C^\perp$ given by the
conditions $v=0,\ \mu=-(\d\Phi)^*\lambda$. Since the inclusion $\T
S\to C$ preserves Courant brackets, this shows $C/C^\perp=\T S$ as
Courant algebroids.
\end{proof}

\begin{proposition}\label{prop:pullbacks}
  There is a canonical Courant morphism
\begin{equation}\label{eq:pphi} P_\Phi\colon \Phi^!\A\da \A\end{equation}
lifting $\Phi\colon S\to M$. Explicitly, 
\[ y\sim_{P_\Phi} x\ \Leftrightarrow\ \exists v\in TS\colon
\a(x)=\d\Phi(v),\ (x;v,0)\in C \mbox{ maps to }y.\]
\end{proposition}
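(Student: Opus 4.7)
The plan is to define $P_\Phi$ directly as the subbundle
\[ P_\Phi=\bigl\{\,(x,\ q(x;v,0))\ \big|\ s\in S,\ x\in \A_{\Phi(s)},\ v\in T_sS,\ \a(x)=\d\Phi(v)\,\bigr\}\subset(\A\times\ol{\Phi^!\A})|_{\on{Graph}(\Phi)},\]
where $q\colon C\to\Phi^!\A$ is the quotient map. Unwinding this description immediately yields the claimed relation $y\sim_{P_\Phi}x$, so the task is to verify the three Courant-morphism axioms.

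I will first address smoothness and rank. The auxiliary bundle $\tilde P=\{(x,v)\in\A\oplus TS|\ \a(x)=\d\Phi(v)\}|_{\on{Graph}(\Phi)}$ has rank $\rk(\A)+\dim S-\dim M$ by the transversality hypothesis on $\a$, and the map $(x,v)\mapsto(x,q(x;v,0))$ is injective, because the explicit description of $C^\perp$ forces $v=0$ whenever $(0;v,0)\in C^\perp$. Hence $P_\Phi$ is a smooth subbundle of exactly half the rank of $\A\times\ol{\Phi^!\A}$. The Lagrangian condition (m1) is then a short check: the pairing of $(x_1,q(x_1;v_1,0))$ and $(x_2,q(x_2;v_2,0))$ equals $\l x_1,x_2\r_\A$ minus $\l(x_1;v_1,0),(x_2;v_2,0)\r_{\A\times\TS}$, which again equals $\l x_1,x_2\r_\A$ since the $\TS$-contributions to the product inner product involve the vanishing $\mu$-components. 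Axiom (m2) is also immediate: the anchor sends $(x,q(x;v,0))$ to $(\a(x),v)\in TM\oplus TS$, and tangency to $\on{Graph}(\Phi)$ is precisely the defining equation $\a(x)=\d\Phi(v)$.

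The main obstacle I expect is the bracket-closure axiom (m3). The strategy is as follows. Given sections $x_1,x_2\in\Gamma(\A)$ and $v_1,v_2\in\Gamma(TS)$ with $\a(x_i)\circ\Phi=\d\Phi\circ v_i$, pull the $v_i$ back to sections $\wt v_i$ of $TS$ on $M\times S$, producing sections $(x_i,\wt v_i,0)$ of $\A\times\TS$ that restrict to $C$-sections over $\on{Graph}(\Phi)$. Their product Courant bracket is $(\Cour{x_1,x_2}_\A,[\wt v_1,\wt v_2],0)$, and property p2 together with the compatibility hypothesis ensures that its restriction to $\on{Graph}(\Phi)$ again lies in $C$, because
\[ \a(\Cour{x_1,x_2})\circ\Phi=[\a(x_1),\a(x_2)]\circ\Phi=\d\Phi\circ[v_1,v_2].\]
Applying the coisotropic-reduction recipe of Proposition \ref{prop:cois} yields $\Cour{q(x_1;v_1,0),q(x_2;v_2,0)}=q(\Cour{x_1,x_2};[v_1,v_2],0)$, so that $(\Cour{x_1,x_2},\Cour{q(x_1;v_1,0),q(x_2;v_2,0)})$ is again a section of $P_\Phi$. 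The only lingering subtlety is that sections of the above form locally span $P_\Phi$, which follows by a partition-of-unity argument from the fibrewise surjection $\tilde P\twoheadrightarrow P_\Phi$; once this is in hand, (m3) is established and $P_\Phi$ is the desired canonical Courant morphism lifting $\Phi$.
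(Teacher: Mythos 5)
Your verification of (m1) and (m2), and the smoothness/rank count via the fibred product $\tilde P$, are fine and agree with the paper's description of $P_\Phi$. The gap is in (m3), precisely at the point you flag as a ``lingering subtlety'': it is \emph{not} true in general that pairs $(x,v)\in\Gamma(\A)\times\Gamma(TS)$ with $\a(x)\circ\Phi=\d\Phi\circ v$ hit every point of $\tilde P$, so the sections $(x,q(x;v,0))$ need not span $P_\Phi$ pointwise, and no partition-of-unity argument can repair this: the obstruction is not about patching local data but about the nonexistence of even one such pair through a given point. Concretely, take $M=S=\R$, $\A=\T M$, $\Phi(u)=u^2$ (transversality holds since $\a$ is surjective). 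Relatedness of $\a(x)=g(t)\partial_t$ and $v=f(u)\partial_u$ means $g(u^2)=2uf(u)$ near $u=0$; the left side is even in $u$, so $f$ is odd away from $0$ and hence $f(0)=0$. Thus every related pair has $v(0)=0$, and the values of your special sections at $s=0$ span only a one-dimensional subspace of the two-dimensional fibre $P_\Phi|_0$ (the elements $(x,q(x;v,0))$ with $v\neq 0$, which are nonzero in $\Phi^!\A$ since $(0;v,0)\in C^\perp$ forces $v=0$, are never attained). So your bracket-closure check covers too small a family of sections, and (m3) is not established.

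This is exactly the difficulty the paper's proof avoids: it factors $\Phi=\Phi_1\circ\Phi_2$, with $\Phi_2\colon S\to M\times S$ the graph embedding and $\Phi_1$ the projection, and obtains $P_\Phi$ as the composition of the reduction morphism $\Phi^!\A\da\A\times\TS$ furnished by Proposition \ref{prop:cois} with the morphism $\A\times\TS\da\A$ (identity times $\TS\da 0$). In the reduction step the relevant sections are \emph{arbitrary} sections of $\A\times\TS$ whose restriction to $\on{Graph}(\Phi)$ lies in $C=\a^{-1}(T\on{Graph}(\Phi))$ --- these do span $C$ pointwise, since any section of the subbundle $C$ over the graph extends to the ambient bundle, with no product-form or off-graph relatedness constraint --- and the bracket condition there follows simply from p2) and the fact that brackets of vector fields tangent to a submanifold remain tangent. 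To fix your argument you should either adopt this composition route, or redo (m3) allowing general (non-product) sections of $\A\times\TS$ restricting to $C$ and then relate them to sections of $\A\times\ol{\Phi^!\A}$ restricting to $P_\Phi$; as written, the proof is incomplete.
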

Note that as a space, $P_\Phi$ is the fibered product of $\A$ and $TS$
over $TM$. It is a smooth vector bundle over $S\cong \on{Graph}(\Phi)$
since $\a$ is transverse
to $\d\Phi$ by assumption.
\begin{proof}
  We factor $\Phi=\Phi_1\circ \Phi_2$, where $\Phi_2\colon S\to
  M\times S$ is the inclusion as $\on{Graph}(\Phi)$, and $\Phi_1\colon
  M\times S\to M$ is projection to the first factor.  Since
  $\Phi^!\A=(\A\times \T S)|_{\on{Graph}(\Phi)}$, there is a canonical
  Courant morphism $R_{\Phi_2}\colon \Phi^!\A\da \A\times \T S$ as
  explained in Proposition \ref{prop:restrict}. The
  Lagrangian subbundle $R_{\Phi_2}$ consists of elements of the form
  $(x;v,\mu;y)$ with $(x;v,\mu)\in C$ mapping to $y\in C/C^\perp$.  On
  the other hand, the projection map $\Phi_1\colon M\times S\to M$
  lifts to a Courant morphism $R_{\Phi_1}\colon \A\times\T S\da \A$
  (given as the direct product of the identity morphism $\A\da \A$
  with the standard morphism $\T S\to \T (\pt)=\pt$). Thus
  $R_{\Phi_1}$ is given by elements of the form $(x;x;v,0)$ with $x\in
  \A$ and $v\in TS$.  Composition gives the Courant morphism
  $P_\Phi\colon \Phi^!\A\da \A$ as described above.
\end{proof}

Our construction of a pull-back Courant algebroid is similar to the
notion of \emph{pull-back Lie algebroid}, due to Higgins-Mackenzie \cite{hig:alg}. Suppose $E\to
M$ is a Lie algebroid with anchor map $\a\colon E\to TM$, and
$\Phi\colon S\to M$ is a smooth map such that $\a$ is transverse to
$\d\Phi$. One defines
\begin{equation}\label{eq:pullalg}
 \Phi^!E=\{(x,v)|\ \d\Phi(v)=\a(x)\}\subset (E\times
TS)|_{\on{Graph}(\Phi)}.\end{equation}
By the transversality assumption, this is a subbundle of rank
$\rk(\Phi^!E)=\rk(E)-\dim(M)+\dim(S)$.  Given two sections of $E\times
TS$ whose restriction to $\on{Graph}(\Phi)$ takes values in $\Phi^!E$,
then so does their Lie bracket. Furthermore, if one of the two
sections vanishes along $\on{Graph}(\Phi)$, then so does their Lie
bracket.  This defines a Lie bracket on $\Gamma(\Phi^!E)$, making
$\Phi^!E$ into a Lie algebroid. If $E$ is a Lagrangian sub-bundle of
$\A\to M$, and such that $\d\Phi$ is transverse to both $\a,\ \a|_E$,
then $\Phi^!E$ is embedded as a Lagrangian sub-bundle of $\Phi^!\A$ by
the map $\Phi^!E\to \Phi^!\A$, taking $(x,v)\in E\times TS$ with
$\a(x)=\d\Phi(v)$ to the equivalence class $[(x;v,0)]\in \Phi^!\A$ of
$(x;v,0)\in C$. Clearly, if $E$ is a Dirac structure, then the Courant
bracket on $\Phi^!\A$ restricts to the Lie algebroid bracket on
$\Phi^!E$, and in particular $\Phi^!E$ is a Dirac structure in
$\Phi^!\A$.

\subsection{Actions of Courant algebroids}\label{subsec:action}
The pull-back $\Phi^!\A$ of a Courant algebroid is different from the
pull-back as a vector bundle $\Phi^*\A$. In order to define a Courant algebroid
structure on $\Phi^*\A$, one needs the additional structure of an
action of $\A$ on the map $\Phi$, such that all stabilizers of the
action are co-isotropic.  Here actions of Courant algebroids may be
defined in analogy with the actions of Lie algebroids \cite{hig:alg}:
\begin{definition}
Suppose $\A\to M$ is a Courant algebroid. An \emph{action of $\A$ on a
manifold $S$} is a map $\Phi\colon S\to M$ together with an
\emph{action map} $\varrho\colon \Gamma(\A)\to
\mf{X}(S)$ satisfying
\[ \begin{split}
\d\Phi\circ \varrho(x)&=\a(x),\\ 
[\varrho(x),\varrho(y)]&=\varrho(\Cour{x,y}),\\ 
\varrho(fx)&=\Phi^*f \ \varrho(x).\end{split}\]
for all $x,y\in\Gamma(\A)$ and $f\in C^\infty(M)$.
\end{definition}
The last condition shows that $\varrho$ defines a vector bundle map
(denoted by the same letter)
\begin{equation}\label{eq:varrho} \varrho\colon \Phi^*\A\to TS.\end{equation}
For each $s\in S$, the kernel of the map $\varrho_s\colon
\A_{\Phi(s)}\to T_sS$ is called the \emph{stabilizer at $s$}.

\begin{theorem}
  Suppose the Courant algebroid $\A\to M$ acts on $S$ with
  co-isotropic stabilizers. Let $\Phi\colon S\to M$ and $\varrho\colon
  \A\to TS$ be the maps defining the action. Then the pull-back vector
  bundle $\Phi^*\A$ carries a unique structure of Courant algebroid
  with anchor map $\varrho$, such that the pull-back map on sections
  $\Phi^*\colon \Gamma(\A)\to \Gamma(\Phi^*\A)$ preserves inner
  products and Courant brackets. If $E\subset \A$ is a Dirac
  structure, then $\varrho|_E$ is an action of the Lie algebroid $E$
  on $S$, and the resulting Lie algebroid structure on $\Phi^*E$
  coincides with that as a Dirac structure in $\Phi^*\A$.
\end{theorem}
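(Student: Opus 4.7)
The plan is to exhibit $\Phi^*\A$ as a coisotropic reduction, applying Proposition~\ref{prop:cois} to the product Courant algebroid $\ca{B}=\A\times\ol{\TS}$ over $M\times S$ and the submanifold $\on{Graph}(\Phi)\cong S$. I take the subbundle
\[ C=\{(x;v,\mu)\in\A_{\Phi(s)}\oplus T_sS\oplus T^*_sS\ :\ v=\varrho_s(x)\}\ \subset\ \ca{B}\big|_{\on{Graph}(\Phi)}.\]
A pairing calculation, using $\l(x_1;v_1,\mu_1),(x_2;v_2,\mu_2)\r_{\ca{B}}=\l x_1,x_2\r_\A-\iota_{v_1}\mu_2-\iota_{v_2}\mu_1$, identifies $C^\perp=\{(\varrho^*\nu;0,\nu):\nu\in T^*S\}$. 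Since $\ran(\varrho^*)=(\ker\varrho)^\perp$, the inclusion $C^\perp\subset C$ amounts to $(\ker\varrho)^\perp\subset\ker\varrho$, which is precisely the coisotropic stabilizer hypothesis.

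The remaining hypotheses of Proposition~\ref{prop:cois} are checked as follows. From $\a\circ\Phi^*=\d\Phi\circ\varrho$ one gets $\a_{\ca{B}}(x;\varrho(x),\mu)=(\d\Phi(\varrho(x)),\varrho(x))\in T\on{Graph}(\Phi)$, and $\a_{\ca{B}}(\varrho^*\nu;0,\nu)=(\d\Phi(\varrho(\varrho^*\nu)),0)=0$, again by coisotropicity. For bracket closure r3), I use two families of sections of $\ca{B}$ over $M\times S$ whose restrictions to $\on{Graph}(\Phi)$ lie in $\Gamma(C)$: for $x\in\Gamma(\A)$, set $\tilde x=(\pr_1^*x,\,\pr_2^*\varrho(\Phi^*x),\,0)$, and for $\nu\in\Om^1(S)$, set $\tilde\nu=(0,0,\pr_2^*\nu)$. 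These locally generate $\Gamma(C)$ over $C^\infty(S)$, and the external-product structure of $\ca{B}$ together with the action identity $[\varrho(\Phi^*x_1),\varrho(\Phi^*x_2)]=\varrho(\Phi^*\Cour{x_1,x_2})$ give
\[ \Cour{\tilde x_1,\tilde x_2}_{\ca{B}}=\widetilde{\Cour{x_1,x_2}},\quad \Cour{\tilde x,\tilde\nu}_{\ca{B}}=(0,0,\pr_2^*\L_{\varrho(\Phi^*x)}\nu),\quad \Cour{\tilde\nu_1,\tilde\nu_2}_{\ca{B}}=0,\]
each of which restricts to a section of $C$. Proposition~\ref{prop:cois} then yields a Courant algebroid $\ca{B}_C=C/C^\perp$ over $\on{Graph}(\Phi)\cong S$.

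It remains to identify $\ca{B}_C\cong\Phi^*\A$ via $x\mapsto[(x;\varrho(x),0)]$. Injectivity holds because $(x;\varrho(x),0)\in C^\perp$ forces $x=0$; surjectivity holds because an arbitrary class $[(y;\varrho(y),\mu)]$ can, after subtracting $(\varrho^*\mu;0,\mu)\in C^\perp$, be represented by $(y-\varrho^*\mu;\varrho(y-\varrho^*\mu),0)$. Under this identification the inner product is preserved, the induced anchor becomes $\varrho$, and the formula for $\Cour{\tilde x_1,\tilde x_2}_{\ca{B}}$ yields $\Cour{\Phi^*x_1,\Phi^*x_2}=\Phi^*\Cour{x_1,x_2}_\A$. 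Uniqueness is immediate: pull-back sections $\Phi^*x$ locally generate $\Gamma(\Phi^*\A)$ over $C^\infty(S)$, and the axioms p1) and c3) determine the bracket on arbitrary sections from its values on these generators.

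For the Dirac sub-bundle assertion, $\Phi^*E$ is fiberwise Lagrangian because $E$ is, and $\Cour{\Phi^*e_1,\Phi^*e_2}=\Phi^*\Cour{e_1,e_2}\in\Gamma(\Phi^*E)$ for $e_1,e_2\in\Gamma(E)$ shows closure under the Courant bracket, so $\Phi^*E$ is Dirac. Its induced Lie algebroid bracket agrees with the Higgins--Mackenzie action-Lie-algebroid bracket of $(\Phi,\varrho|_E)$ on generating pull-back sections, hence everywhere. The main obstacle in this plan is the verification of r1): identifying $C^\perp$ and showing $C^\perp\subset C$ is precisely where the coisotropic stabilizer hypothesis enters, and without it the bracket on $\Gamma(\ca{B})$ fails to descend to a Courant bracket on sections of $\Phi^*\A$.
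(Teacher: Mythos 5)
Your proposal is correct and is essentially the paper's own proof: the same coisotropic reduction of the product Courant algebroid $\A\times\TS$ along $\on{Graph}(\Phi)$, with the same subbundle $C$ spanned by the elements $(x;\varrho(x),\mu)$, the same $C^\perp$ (your use of $\ol{\TS}$ merely flips the sign $-\varrho^*\nu\mapsto\varrho^*\nu$), and the same identification of $C/C^\perp$ with $\Phi^*\A$ via $x\mapsto[(x;\varrho(x),0)]$, including the Dirac-structure statement. The only cosmetic difference is that you verify $C^\perp\subset C$ directly from $\varrho\circ\varrho^*=0$, whereas the paper notes that $C$ contains the coisotropic subspace $\ker(\varrho_s)\times T_s^*S$ and gets $C^\perp$ by a rank count.
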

\begin{proof}
  Consider the direct product Courant algebroid $\A\times \T S$ over
  $M\times S$. For $\mu\in \Om^1(S)$ and $x\in \Gamma(\A)$, let 
\[ f(x,\mu)=(x;\varrho(x),\mu)\in \Gamma(\A\times \T S).\]
The image of $f(x,\mu)$ under the anchor map is the vector field
$(\a(x),\varrho(x))$, which is tangent to the graph of $\Phi$ since
$\d\Phi(\varrho(x))=\a(x)$. The Courant bracket between two sections
of this form is $\Cour{f(x_1,\mu_1),f(x_2,\mu_2)}=f(x,\mu)$ with 
$x=\Cour{x_1,x_2}$ and $\mu=\L_{\varrho(x_1)}\mu_2-\iota_{\varrho(x_2)}\d\mu_1$.
Let $C\subset (\A\times \T S)|_{\on{Graph}(\Phi)}$ be the subbundle
spanned by the restrictions of $f(x,\mu)$ to the graph of $\Phi$. 
This is a co-isotropic subbundle, since its fiber at $(\Phi(s),s)$ 
contains the co-isotropic subspace $\ker(\rho_s)\times T^*_sS$.
Its orthogonal is given by 
\[ C^\perp=\{(-\varrho^*\nu;0,\nu)|\ \nu\in T^*S\}.\]
(Indeed, it is easy to check that the elements on the right hand side
lie in $C^\perp$. Equality follows since $\rk(C)=\rk(\A)+\dim S$,
hence $\rk(C^\perp)=\rk(\A\times\T S)-\rk(C)=\dim S$.) Since the
co-isotropic subbundle satisfies all the conditions from Proposition
\ref{prop:cois}, the reduced Courant algebroid $\A_C$ over
$\on{Graph}(\Phi)\cong S$ is defined. The sections of the form
$f(x,0)$ span a complement to $C^\perp$ in $C$, and identify
$\A_C=\Phi^*\A$ as a vector bundle. This identification also preserves
inner products, since $\l f(x_1,0),f(x_2,0)\r=\l x_1,x_2\r$.
Furthermore, $\Cour{f(x_1,0),f(x_2,0)}=f(\Cour{x_1,x_2},0)$ shows that
the pull-back map on sections $\Gamma(\A)\to
\Gamma(\Phi^*\A)=\Gamma(\A_C)$ preserves Courant brackets. If
$E\subset \A$ is a Dirac structure, then it is obvious that the action
$\varrho$ restricts to a Lie algebroid action of $E$ (i.e. the map
$\Gamma(E)\to \Gamma(TS)$ defined by $\varrho$ preserves brackets).
Also, since the Lie algebroid bracket on $\Gamma(\Phi^*E)$ is determined by 
the Lie bracket on the subspace $\Phi^*\Gamma(E)$, it is immediate that
$\Phi^*E$ with this bracket is a Dirac structure in $\Phi^*\A$. 
\end{proof}

\begin{example}
  Suppose $\A\to M$ is a Courant algebroid, and $\Phi\colon S\hra M$
  is an embedded submanifold, such that $\ran(\a)$ is tangent to $S$.
  Then the map $\Gamma(\A)\to \Gamma(TS)$, given by restriction to $S$
  followed by $\a$, satisfies the axioms. Hence, $\Phi^*\A$ is a
  well-defined Courant algebroid.
\end{example}

\begin{example}\label{ex:important}
  Let $\g$ be a quadratic Lie algebra, viewed as a Courant algebroid
  over a point. Let $M$ be a manifold with a $\g$-action whose
  stabilizer algebras are coisotropic. Then the product $M\times\g$
  (viewed as the vector bundle pull-back of $\g\to \pt$ by the map
  $M\to \pt$) acquires the structure of a Courant algebroid. This
  example will be explored in detail in Section \ref{sec:cou}.
\end{example}

Since the Courant bracket on $\Phi^*\A$ was defined by reduction,
there is a Courant morphism $\Phi^*\A\da \A\times \T S$ lifting the
inclusion $S\hra M\times S$ as the graph of $\Phi$. Its composition
with $\A\times \T S\da \A$ (lifting the projection to the first
factor) is a Courant morphism 
\begin{equation}\label{eq:qphi}
 U_\Phi\colon \Phi^*\A\da \A
\end{equation} 
lifting $\Phi$.

\section{Manin pairs and Manin triples}\label{sec:man}
A pair $(\A,E)$ of a Courant algebroid $\A\to M$ together with a
Dirac structure is called a \emph{Manin pair} (over $M$). Given a
second Dirac structure $F\subset \A$ such that $\A=E\oplus F$, the triple
$(\A,E,F)$ is called a \emph{Manin triple} (over $M$). For $M=\pt$, this reduces to the
classical notion of a Manin triple $(\dd,\g_1,\g_2)$ as introduced by Drinfeld
\cite{dr:qu}: A split quadratic Lie algebra $\dd$ with 
two transverse Lagrangian subalgebras $\g_1,\g_2$. 
\begin{remark}\label{rem:lieb}
  As shown by Liu-Weinstein-Xu \cite{liu:ma}, Manin triples over $M$ are equivalent
  to \emph{Lie bialgebroids} $A\to M$. Indeed, for any Lie bialgebroid
  the direct sum $\A=A\oplus A^*$ carries a unique structure of a
  Courant algebroid such that $(\A,A,A^*)$ is a Manin triple. 
  The more general concept of a \emph{Manin quasi-triple} requires
  only that $E$ is integrable. It is equivalent to the notion of
  quasi-Lie bialgebroid, see Kosmann-Schwarzbach \cite{kos:jac,kos:qua},
  Roytenberg \cite{roy:qua}, and Ponte-Xu \cite{pon:ham}.
\end{remark}

\subsection{Backward images}
The following result was obtained in \cite[Proposition 2.10]{al:pur} 
for the case of exact Courant algebroids, with a very different
proof. 
\begin{proposition}\label{prop:natural}
Let $R_\Phi\colon \A\da \A'$ be a Courant morphism. Suppose $E'\subset
\A'$ is a Lagrangian subbundle, with the property  
\begin{equation}\label{eq:technical}
 x'\in E',\ 0\sim_{R_\Phi} x'\in E'\ \Rightarrow\ x'=0.
\end{equation}
Then the \emph{backward image}
\[ E:=E'\circ R_\Phi=\{x\in\A|\ \exists x'\in E'\colon x\sim_{R_\Phi} x'\}\]
is a Lagrangian subbundle of $\A$, and there is a unique bundle
homomorphism $\alpha\colon E\to \Phi^*E'$ such that $x\sim_{R_\Phi} \alpha(x)$
for all $x\in E$. The Courant tensors of $E,E'$ are related by 
\begin{equation}\label{eq:relation}
 \Upsilon^E=\alpha^*(\Phi^*\Upsilon^{E'}),
\end{equation}
where $\alpha^*$ is the map dual to $\alpha$, extended to the exterior
algebras.
\end{proposition}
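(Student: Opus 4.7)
My plan is to establish the three claims in sequence, with the cleanness hypothesis \eqref{eq:technical} doing most of the work. First I would analyse the subbundle $R_\Phi \cap (E' \times \A) \subset (\A' \oplus \ol{\A})|_{\on{Graph}(\Phi)}$. Condition \eqref{eq:technical} says exactly that its fibrewise intersection with $E' \times 0$ is zero, equivalently that the projection $\pi_\A : R_\Phi \cap (E' \times \A) \to \A$ along $\A'$ is fibrewise injective. By the definition of backward image, the image of $\pi_\A$ is $E$, so $\pi_\A$ identifies $R_\Phi \cap (E' \times \A)$ with $E$; composing $\pi_\A^{-1}$ with the projection onto the $\A'$-factor then produces a bundle map $\alpha : E \to \Phi^*E'$ satisfying $x \sim_{R_\Phi} \alpha(x)$. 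Uniqueness follows since if $x \sim_{R_\Phi} x'_1$ and $x \sim_{R_\Phi} x'_2$ with $x'_i \in E'$, linearity of $R_\Phi$ gives $0 \sim_{R_\Phi} (x'_1 - x'_2)$, forcing $x'_1 = x'_2$ by \eqref{eq:technical}.

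Second, I would deduce that $E$ is a smooth Lagrangian subbundle. Using $R_\Phi^\perp = R_\Phi$ together with $(E' \times \A)^\perp = E' \times 0$ in $\A' \oplus \ol{\A}$, one gets
\[
\bigl(R_\Phi + (E' \times \A)\bigr)^\perp \,=\, R_\Phi \cap (E' \times 0),
\]
which is zero by \eqref{eq:technical}. Hence $R_\Phi + (E' \times \A) = \A' \oplus \ol{\A}$, and a dimension count gives $\rk(R_\Phi \cap (E' \times \A)) = \tfrac{1}{2}\rk\A$. Constant rank makes $R_\Phi \cap (E' \times \A)$ into a smooth subbundle, and its identification with $E$ via $\pi_\A$ shows $E$ is smooth of half rank in $\A$. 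For isotropy: if $x_1, x_2 \in E$ and $x_i \sim_{R_\Phi} x'_i \in E'$, then $(x'_i, x_i) \in R_\Phi$ combined with the isotropy of $R_\Phi$ yields $\langle x_1, x_2\rangle = \langle x'_1, x'_2\rangle = 0$. Thus $E$ is Lagrangian.

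Third, for the Courant tensor identity \eqref{eq:relation} I would reduce to a pointwise statement at an arbitrary $m \in M$. With $x_i \in E_m$ and $x'_i := \alpha(x_i) \in E'_{\Phi(m)}$, the goal is to find local sections $\tilde x_i \in \Gamma(\A)$ near $m$ and $\tilde x'_i \in \Gamma(\A')$ near $\Phi(m)$ satisfying $\tilde x_i \sim_{R_\Phi} \tilde x'_i$, $\tilde x_i|_m = x_i$, and $\tilde x'_i|_{\Phi(m)} = x'_i$. Given such extensions, the Courant morphism properties recalled in the excerpt provide
\[
\langle \tilde x_1, \Cour{\tilde x_2, \tilde x_3}\rangle \,=\, \Phi^*\langle \tilde x'_1, \Cour{\tilde x'_2, \tilde x'_3}\rangle,
\]
and evaluating at $m$ — combined with the observation that isotropy of $E, E'$ makes both sides tensorial, so they depend only on the fibrewise values $x_i, x'_i$ — yields $\Upsilon^E(x_1,x_2,x_3)|_m = \Upsilon^{E'}(x'_1, x'_2, x'_3)|_{\Phi(m)}$, which is precisely $\Upsilon^E = \alpha^*(\Phi^*\Upsilon^{E'})$.

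The main obstacle will be constructing the compatible extension pair $(\tilde x_i, \tilde x'_i)$. Choosing $\tilde x'_i$ independently first can fail, because $\tilde x'_i(\Phi(n))$ need not lie in the first-component image of $R_\Phi$ for $n$ near $m$. I would therefore begin with a local section of the smooth subbundle $R_\Phi \cap (E' \times \A)$ over $\on{Graph}(\Phi)$ extending the fibre element $(x'_i, x_i)$; the remaining task is to decompose this section as a product $\tilde x'_i \times \tilde x_i$ of honest sections on open sets of $M'$ and $M$ separately. This I would carry out in local trivialisations of $\A$ and $\A'$, using that $\on{Graph}(\Phi)$ is an embedded submanifold of $M' \times M$ so that sections along the graph admit the tubular extensions needed to promote the $\A'$-components to genuine sections of $\A'$ over an open set in $M'$.
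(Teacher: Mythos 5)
Your first two steps are fine and essentially match the paper: the construction and uniqueness of $\alpha$, and the smoothness/Lagrangian property of $E$ via $(R_\Phi+(E'\times\A))^\perp=R_\Phi\cap(E'\times 0)=0$ plus a rank count, are a correct (if differently packaged) version of the paper's coisotropic-reduction argument \`a la Guillemin--Sternberg.

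The proof of \eqref{eq:relation}, however, has a genuine gap, and it sits exactly where you flag "the main obstacle". Your scheme requires, through each point $(\alpha(x_i),x_i)\in R_\Phi$, a pair of honest sections $\tilde x_i\in\Gamma(\A)$, $\tilde x'_i\in\Gamma(\A')$ with $\tilde x_i\sim_{R_\Phi}\tilde x'_i$; moreover, for your tensoriality step you would in fact need $\tilde x_i\in\Gamma(E)$ and $\tilde x'_i\in\Gamma(E')$, since the value of $\l \sigma_1,\Cour{\sigma_2,\sigma_3}\r$ at a point depends only on the point values of the $\sigma_i$ when they are sections \emph{of the Lagrangian subbundle}, not merely sections of the ambient algebroid whose value at that single point lies in it (otherwise derivative terms of the form $(\a(\sigma_2)f)\,\l\sigma_1,y\r$ survive). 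Such product-type extensions need not exist: a section of $R_\Phi\cap(E'\times\A)$ along $\on{Graph}(\Phi)$ has $\A'$-component only a section of $\Phi^*\A'$, and rewriting it as $\Phi^*\tilde x'$ forces it to be constant along fibres of $\Phi$ and to lie in $\ran(R_\Phi|_m)$ (equivalently in the image of $\alpha_m$) for all nearby $m$; this fails in general because $\Phi$ need not be injective and because $E_m\cap\{x\colon x\sim_{R_\Phi}0\}$, hence the image of $\alpha_m$, can jump with $m$. No tubular-neighbourhood extension repairs this: adjusting the $\A'$-component to pull-back form generically pushes the pair out of $R_\Phi$. The paper's proof is built precisely to avoid separate sections of $\A$ and $\A'$: it views $E'\times E$ as a Lagrangian subbundle of $\A'\times\ol{\A}$ over $M'\times M$, with Courant tensor $\Upsilon^{E'}-\Upsilon^{E}$, and chooses sections of $\A'\times\ol{\A}$ which restrict along $\on{Graph}(\Phi)$ to sections of $R_\Phi$ with the prescribed values $(x_i',x_i)$ (and which may be taken inside $E'\times E$, since $R_\Phi\cap(E'\times E)=\on{Graph}(\alpha)$ is a smooth subbundle over the closed embedded graph). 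Axiom m3) then makes $\Cour{\sigma_2,\sigma_3}$ restrict to a section of $R_\Phi$, so $\l\sigma_1,\Cour{\sigma_2,\sigma_3}\r$ vanishes along the graph because $R_\Phi$ is Lagrangian, and evaluation at $(\Phi(m),m)$ gives $\Upsilon^{E'}(x_1',x_2',x_3')=\Upsilon^{E}(x_1,x_2,x_3)$, i.e.\ \eqref{eq:relation}. To salvage your argument you should replace your compatible pairs of sections of $\A$ and $\A'$ by such sections of the product algebroid; as written, the existence step would fail.
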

\begin{proof}
  The fact that the backward image is a Lagrangian subbundle is
  parallel to Guillemin-Sternberg's construction in symplectic
  geometry \cite{gui:lin}, hence we will be brief. We have
\[ E=E'\circ R_\Phi\cong ((E'\times R_\Phi)\cap C)\big/((E'\times R_\Phi)\cap C^\perp)\]
where $C\subset (\ol{\A}'\times \A'\times\ol{\A})|_{\diag(M')\times M}$
is the co-isotropic subbundle given as 
\[ C=\{(x',x',x)|\ x\in \A,\ x'\in \A'\}.\]
$C^\perp$ is similarly given as the set of all $(x',x',0)$.  The
condition $x'\in E',\, 0\sim_{R_\Phi}x' \Rightarrow x'=0$ amounts to the
transversality property $(E'\times R_\Phi)\cap C^\perp=0$, ensuring
that $E$ is a smooth subbundle. It is easy to see that $E$ is
isotropic, and hence Lagrangian by dimension count.  The map $\alpha$
associates to each $x\in E_m$ the unique $x'\in E_{\Phi(m)}$ with
$x\sim_{R_\Phi}x'$. To prove \eqref{eq:relation} we must 
show that
\begin{equation}\label{eq:courantdiff}
 \Upsilon^{E'}(x_1',x_2',x_3')-\Upsilon^E(x_1,x_2,x_3)=0\end{equation}
for all $x_i\in E_m,\ x_i'=\alpha(x_i)\in E'_{\Phi(m)}$. 
Think of $E'\times E$ as a
Lagrangian subbundle of $\A'\times\ol{\A}$. Its Courant tensor is 
\[ \Upsilon^{E'\times E}=\Upsilon^{E'}-\Upsilon^E\]
as an element of $(\wedge^3 E')^*+\wedge^3 E^*\subset 
\wedge^3(E'\times E)^*$. Let
$\sig_1,\sig_2,\sig_3$ be three sections of $\A'\times\ol{\A}$ 
restricting to sections of $R_\Phi$. Then 
\[  \Upsilon^{E'\times E}(\sig_1,\sig_2,\sig_3)=\l
\sig_1,\Cour{\sig_2,\sig_3}\r \]
vanishes along the graph of $\Phi$: Indeed $\Cour{\sig_2,\sig_3}$ restricts to a section of
$R_\Phi$, hence its inner product with $\sig_1$ restricts to $0$. 
Choosing the $\sig_i$ such that $\sig_i|_{(\Phi(m),m)}=(x_i',x_i)$ 
this gives \eqref{eq:courantdiff}. 
\end{proof}

As a special case, we see that if $(\A',E')$ is a Manin pair, and $E$
is the backward image of $E'$ under a Courant morphism $R_\Phi\colon
\A\to \A'$ satisfying \eqref{eq:technical}, then $(\A,E)$ is a Manin pair.

\begin{example}\label{ex:pullie}
Let $\A\to M$ be a Courant algebroid and $\Phi\colon S\to M$ a
  smooth map, with $\d\Phi$ transverse to the anchor map $\a$. Let
  $P_\Phi\colon \Phi^!\A\da \A$ be the Courant morphism defined in
  Section \ref{subsec:pull}, and suppose $E\subset \A$ is a
  Lagrangian subbundle, with the property $x\in E,\ 0\sim_{P_\Phi}
  x\Rightarrow x=0$. Then
\[ E\circ P_\Phi=\Phi^! E\]
where $\Phi^!E$ is defined by \eqref{eq:pullalg}.

Similarly, given a Courant algebroid action $(\Phi,\varrho)$ of $\A$
as in Section \ref{subsec:action}, the backward image of a Lagrangian
sub-bundle $E$ under the Courant morphism $U_\Phi$ is just the pull-back
bundle $\Phi^*E$. 
\end{example}

%
%

\subsection{The bivector associated to a Lagrangian splitting}
Let $\A\to M$ be a Courant algebroid. By a \emph{Lagrangian splitting} of
$\A$, we mean a direct sum decomposition
\[ \A=E\oplus F\]
where $E,F$ are Lagrangian subbundles. 
Given a Lagrangian splitting $\A=E\oplus F$, let $\pr_E,\pr_F$ be the
projections onto the two summands. Thus $x=\pr_E(x)+\pr_F(x)$ for all 
$x\in \A$. Note $\l x, x\r=2\l \pr_E(x),\pr_F(x)\r$, so that
$\pr_E(x)$ and $\pr_F(x)$ are orthogonal if $x$ is isotropic. This
applies in particular to elements of the form $x=\a^*(\mu)$.
Define a bi-vector field $\pi\in \mf{X}^2(M)$ by the identity 
\[ \iota_\mu\pi=\a(\pr_F(\a^*(\mu)),\ \ \mu\in T^*M.\]
This is well-defined since 
$\iota_\mu\a(\pr_F(\a^*(\mu))=\l\a^*(\mu),\pr_F(\a^*(\mu)\r=0$.
Since $\a(\a^*(\mu))=0$, the formula for $\pi$ may also be written 
$ \iota_\mu\pi=-\a(\pr_E(\a^*(\mu))$. If $e_i$ is a local frame for 
$E$, and $f^i$ the dual frame for $F$, i.e. 
$\l e_i,f^j\r=\delta_i^j$, we find, 
\begin{equation}\label{eq:piformula}
 \pi=\hh \sum_i \a(e_i)\wedge \a(f^i).
\end{equation}
Note that $\pi$ changes sign if the roles of $E,F$ are reversed.  
\begin{example}\label{ex:pi}
If $\pi$ is any bivector on $M$, with graph $\on{Gr}_\pi\subset \TM$,
the splitting $\TM=TM\oplus \on{Gr}_\pi$ is Lagrangian splitting. The 
bivector associated to this splitting is just $\pi$ itself. 
\end{example}
The
following Proposition gives an alternative description of the bivector
field $\pi$.
\begin{proposition}\label{prop:prel}
Let $\A=E\oplus F$ be a Lagrangian splitting with associated bivector
$\pi$. Then $\on{Gr}_{-\pi}\subset \TM$ 
is the backward image of $E\times F$ under
the diagonal morphism  $R_\diag$ from Proposition \ref{prop:diagonal}. 
Furthermore, $0\sim_{R_\diag}(x,y)\in E\times F$ implies $x=0,\ y=0$.
\end{proposition}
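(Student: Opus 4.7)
The plan is to unwind both claims directly from the definitions. By Proposition \ref{prop:diagonal}, the relation $(v,\mu) \sim_{R_\diag} (x,y)$ means precisely $v = \a(x)$ and $x - y = \a^*\mu$. So the backward image $(E\times F) \circ R_\diag$ consists of those $(v,\mu) \in \TM$ for which there exist $x \in E$, $y \in F$ with $v = \a(x)$ and $x - y = \a^*\mu$.

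For the first claim, I would show both inclusions. Given $\mu \in T^*M$, decompose $\a^*(\mu) = \pr_E(\a^*\mu) + \pr_F(\a^*\mu)$ along $\A = E \oplus F$, and set $x = \pr_E(\a^*\mu) \in E$, $y = -\pr_F(\a^*\mu) \in F$. Then $x - y = \a^*\mu$, and $v := \a(x) = \a(\pr_E(\a^*\mu)) = -\iota_\mu\pi$ by the formula $\iota_\mu\pi = -\a(\pr_E(\a^*\mu))$ noted just before the statement. Hence $(-\iota_\mu\pi, \mu) = (\iota_\mu(-\pi),\mu)$ lies in the backward image, giving $\on{Gr}_{-\pi} \subseteq (E\times F)\circ R_\diag$. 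Conversely, if $(v,\mu)$ lies in the backward image, the equation $x - y = \a^*\mu$ together with the direct sum decomposition forces $x = \pr_E(\a^*\mu)$, hence $v = \a(x) = -\iota_\mu\pi$, so $(v,\mu) \in \on{Gr}_{-\pi}$. Since both sides have the same rank, this identifies them as subbundles.

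For the second claim, the statement $0 \sim_{R_\diag} (x,y)$ with $(x,y) \in E \times F$ means $\a(x) = 0$ and $x - y = \a^*(0) = 0$. Thus $x = y$, and since the sum $E \oplus F$ is direct, $x = y = 0$. This verifies the transversality hypothesis \eqref{eq:technical} of Proposition \ref{prop:natural} for the Lagrangian subbundle $E \times F \subset \A \times \ol{\A}$ and the morphism $R_\diag$, and in particular confirms a posteriori that the backward image is a smooth Lagrangian subbundle.

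There is no real obstacle here; the only thing to be careful about is the sign in the formula for $\pi$, which is why the backward image comes out to be the graph of $-\pi$ rather than $+\pi$. I would double-check the computation $\a(\pr_E(\a^*\mu)) = -\iota_\mu\pi$, which follows from $\iota_\mu\pi = \a(\pr_F(\a^*\mu))$ together with $\a \circ \a^* = 0$ (property p3), so that $\a(\pr_E(\a^*\mu)) + \a(\pr_F(\a^*\mu)) = \a(\a^*\mu) = 0$.
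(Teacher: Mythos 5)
Your proof is correct and follows essentially the same route as the paper: unwind the definition of $R_\diag$, decompose $\a^*\mu$ along $\A=E\oplus F$ to identify the backward image with $\on{Gr}_{-\pi}$, and check the condition \eqref{eq:technical} by setting $\mu=0$ and using $E\cap F=0$. The only cosmetic difference is that the paper first invokes Proposition \ref{prop:natural} to know the backward image is a Lagrangian subbundle transverse to $TM$ (hence the graph of some bivector) before identifying that bivector, whereas you verify both inclusions directly, which is equally valid and makes the rank remark superfluous.
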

\begin{proof}
By definition of the diagonal morphism, we have 
$(v,\mu)\sim_{R_\diag}(x,y)$ if and only if 
$v=\a(x),\ x-y=\a^*(\mu)$. 
In particular, for $\mu=0$ and $x\in E,\ y\in F$ we obtain $x=y=0$.
Hence, Proposition \ref{prop:natural} shows that the backward image of
$E\times F$ under $R_\diag$ is a smooth Lagrangian subbundle of
$\TM$, transverse to $TM$. The backward image is thus of the form
$\on{Gr}_{-\pi}$ for \emph{some} bivector $\pi$. The relation
\[ (-\iota_\mu\pi,\ \mu)\sim_{R_\diag} (x,y)\]
with $x\in E$ and $y\in F$ means by definition of $R_\diag$ that 
$\a^*\mu=x-y$, thus $y=\pr_F \a^*\mu$, and $-\iota_\mu\pi=\a(y)$. 
Thus $\iota_\mu\pi=
\a(\pr_F(\a^*(\mu)))$ proving the Formula \eqref{eq:piformula}. 
\end{proof}

\subsection{Rank of the bivector}
The rank of the map $\pi^\sharp\colon T^*M\to TM,\ \mu\mapsto
\iota_\mu\pi$ at $m\in M$ is called the rank of $\pi$ at $m$. If  
$\pi$ is integrable (i.e. Poisson), the range of $\pi_m^\sharp$ is the
tangent space to the symplectic leaf, and so $\rk(\pi_m)$ is its
dimension. By definition of $\pi$, 
\begin{equation}\label{eq:range}
 \ran\pi^\sharp=\a(\pr_F \ran\a^*)=\a(\pr_E\ran\a^*).
\end{equation}
For each $m$, the subspace $L_m=\ran(\a^*_m)+(\ker(\a_m)\cap F_m)$ is
Lagrangian, as one verifies by taking its orthogonal. We have:
\begin{lemma}
At any point $m\in M$, the rank of the bivector $\pi$ is given by the
formula, 
\begin{equation}\label{eq:rk}
 \rk(\pi_m)=\dim(\a_m(F_m))-\dim(L_m\cap E_m).\end{equation}
\end{lemma}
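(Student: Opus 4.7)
The plan is to factor $\pi^\sharp_m\colon T^*_mM \to T_mM$ through $E_m$, compute its rank by rank-nullity, and then identify the resulting intersection term with $L_m \cap E_m$.

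First I would decompose the anchor as $\a_m = \a_E \oplus \a_F$, with $\a_E := \a_m|_{E_m}$ and $\a_F := \a_m|_{F_m}$. Since the inner product pairs the two Lagrangian summands non-degenerately, it yields canonical identifications $F_m \cong E_m^*$ and $E_m \cong F_m^*$; under these, the two components of $\a_m^*$ relative to the splitting $\A_m = E_m \oplus F_m$ are exactly the transposes $\pr_{E_m}\a_m^* = \a_F^*\colon T^*_mM \to E_m$ and $\pr_{F_m}\a_m^* = \a_E^*\colon T^*_mM \to F_m$. Substituting this into the defining formula $\iota_\mu\pi = \a_m(\pr_{F_m}\a^*_m\mu)$, and invoking $\a\circ\a^* = 0$ (property p3), yields two equivalent factorizations
\[
\pi^\sharp_m \;=\; \a_F\circ\a_E^* \;=\; -\,\a_E\circ\a_F^*.
\]
Applying rank-nullity to the right-hand factorization gives
\[
\rk(\pi_m) \;=\; \dim\im(\a_F^*) \,-\, \dim\bigl(\im(\a_F^*)\cap\ker(\a_E)\bigr) \;=\; \dim\a_m(F_m) \,-\, \dim\bigl(\im(\a_F^*)\cap\ker(\a_E)\bigr),
\]
using $\dim\im\a_F^* = \dim\im\a_F = \dim\a_m(F_m)$.

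The crux of the proof is then the identification
\[
L_m \cap E_m \;=\; \im(\a_F^*)\cap\ker(\a_E),
\]
which I would establish by double inclusion. Write $J := \ker(\a_m)\cap F_m$, so that $L_m = \ran(\a_m^*) + J$. For the inclusion $\supset$: if $y = \a_F^*\mu \in \ker\a_E$, then $\pi^\sharp\mu = -\a_E\a_F^*\mu = 0$, so $\a(\a_E^*\mu) = \a_F(\a_E^*\mu) = \pi^\sharp\mu = 0$, placing $\a_E^*\mu \in J$; then the identity $\a_F^*\mu = \a^*\mu - \a_E^*\mu$ exhibits $y$ as an element of $\ran(\a_m^*) + J = L_m$. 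For $\subset$: any $y \in L_m \cap E_m$ has a decomposition $y = \a^*\mu + j$ with $j \in J$; separating $E$- and $F$-components in $y = \a_F^*\mu + \a_E^*\mu + j$ forces $j = -\a_E^*\mu$ and $y = \a_F^*\mu$, and the requirement $j \in \ker(\a_m)$ translates into $\pi^\sharp\mu = 0$, i.e.\ $\a_E(\a_F^*\mu) = 0$.

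The only subtle point is the identification $\pr_{E_m}\circ\a_m^* = \a_F^*$ (and its partner) in the first paragraph; once those are in place, the remainder is bookkeeping with the $E$- and $F$-components of $\a^*\mu$, and I don't foresee any serious obstacle beyond this translation.
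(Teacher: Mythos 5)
Your proof is correct and follows essentially the same route as the paper's: your factorization $\pi^\sharp=-\a_E\circ\a_F^*$ is just the paper's identity $\ran\pi^\sharp=\a(\pr_E\ran\a^*)$ in disguise (since $\im\a_F^*=\pr_E(\ran\a_m^*)$), and both arguments then apply rank--nullity and identify the kernel term with $L_m\cap E_m$. The only cosmetic difference is that you obtain $\dim\im\a_F^*=\dim\a_m(F_m)$ from equality of rank with the transpose, where the paper counts dimensions via orthogonal complements.
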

\begin{proof}
Formula \eqref{eq:range} shows that $\pi_m$ has rank 
\[ \rk(\pi_m)=\dim(\pr_E(\ran(\a^*_m)))-\dim(\ker(\a_m)\cap
\pr_E(\ran(\a^*_m))).\]
But $\pr_E(\ran(\a^*_m))^\perp=((F_m+\ran(\a^*_m))\cap E_m)^\perp=(F_m\cap
\ker(\a_m))+E_m$ has dimension $\dim E_m+\dim(F_m\cap\ker(\a_m))$. Hence
$\dim(\pr_E(\ran(\a^*_m)))=\dim F_m-\dim(F_m\cap\ker(\a_m))=\dim(\a_m(F_m))$.
Similarly, one observes that 
\[ \ker(\a_m)\cap \pr_E(\ran(\a^*_m))=(\ran(\a^*_m)+F_m\cap
\ker(\a_m))\cap E_m.\] 
\end{proof}
Equation \eqref{eq:range} shows in particular that
$\ran(\pi^\sharp)\subset \a(E)\cap \a(F)$.
In nice cases, this is an equality:
\begin{proposition}\label{prop:leaves}
Let $\A=E\oplus F$ be a Lagrangian splitting, with associated bivector
$\pi\in\mf{X}^2(M)$. Then
\begin{equation}\label{eq:equal} 
\ran(\pi^\sharp)= \a(E)\cap \a(F),\end{equation}
if and only if
\begin{equation}\label{eq:condition}
 \ker(\a)=\on{ran}(\a^*)+(\ker(\a)\cap E)+(\ker(\a)\cap F).
\end{equation}
\end{proposition}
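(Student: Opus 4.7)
The plan is to argue pointwise, since both sides of \eqref{eq:equal} depend only on linear data at a fixed $m \in M$. First I would rewrite the two subspaces in parallel form: using the formula $\iota_\mu\pi = \a(\pr_F(\a^*\mu))$, one has $\ran(\pi^\sharp) = \a(F \cap (E + \ran\a^*))$, while unwinding definitions gives $\a(E) \cap \a(F) = \a(F \cap (E + \ker\a))$. Since $\ran\a^* \subset \ker\a$ by property p3), the inclusion $\ran(\pi^\sharp) \subset \a(E) \cap \a(F)$ is automatic (as was already noted after \eqref{eq:range}). What remains is to determine when the reverse inclusion holds; this reduces to asking when an element of $\ker\a$ can be adjusted, modulo $\ker\a \cap E$ and $\ker\a \cap F$, to land in $\ran\a^*$---which is precisely the content of \eqref{eq:condition}.

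For the direction \eqref{eq:condition} $\Rightarrow$ \eqref{eq:equal}, I would take $v \in \a(E) \cap \a(F)$ and choose $x \in E$, $y \in F$ with $\a(x) = \a(y) = v$, so that $x - y \in \ker\a$. Applying \eqref{eq:condition} gives $x - y = \a^*\mu + u + w$ with $u \in \ker\a \cap E$ and $w \in \ker\a \cap F$. The key move is then to adjust: setting $x' := x - u \in E$ and $y' := y + w \in F$, one still has $\a(x') = \a(y') = v$ but now $x' - y' = \a^*\mu$. Reading off $\pr_F(\a^*(-\mu)) = y'$, this exhibits $v = \iota_{-\mu}\pi$ as an element of $\ran(\pi^\sharp)$.

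For the converse, I would start with $w \in \ker\a$, decompose $w = p + q$ along the splitting $\A = E \oplus F$, and observe that $v := \a(p) = -\a(q)$ lies in $\a(E) \cap \a(F) = \ran(\pi^\sharp)$ by hypothesis. Choosing $\mu$ with $\iota_\mu\pi = v$ and writing $\a^*\mu = \tilde p + \tilde q$ with $\tilde p \in E$, $\tilde q \in F$, one automatically has $\a(\tilde q) = v$ and $\a(\tilde p) = -v$. Consequently $p + \tilde p \in \ker\a \cap E$ and $q + \tilde q \in \ker\a \cap F$, and assembling these gives $w = \a^*(-\mu) + (p + \tilde p) + (q + \tilde q)$, verifying \eqref{eq:condition}.

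The whole argument is linear algebra carried out in a single fiber, using only the formula for $\pi$ and property p3). The only thing requiring genuine care is sign bookkeeping in the adjustment step of the forward direction---making sure that $u, w \in \ker\a$ lets one absorb them into $x, y$ without disturbing the anchor values---but this is routine rather than a real obstacle.
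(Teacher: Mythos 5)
Your proof is correct and is essentially the paper's own argument: the explicit adjustment of $x,y$ by elements of $\ker(\a)\cap E$ and $\ker(\a)\cap F$ in the forward direction, and the recovery of a suitable $\mu$ from $v\in\ran(\pi^\sharp)$ in the converse, are exactly the paper's steps, which it merely phrases through the relation $(v,\mu)\sim_{R_{\diag}}(x,y)$ (i.e.\ $v=\a(x)$, $x-y=\a^*(\mu)$) and Proposition \ref{prop:prel}, while you unwind the same equations directly from the formula $\iota_\mu\pi=\a(\pr_F(\a^*\mu))$. No gaps.
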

\begin{proof}
Suppose
  Condition \eqref{eq:condition} is satisfied. Given $v\in \a(E)\cap
  \a(F)$, let $x\in E,\ y\in F$ with $\a(x)=\a(y)=v$. Then
  $x-y\in \ker(\a)$, and \eqref{eq:condition} allows us to modify
  $x,y$ to arrange $x-y\in \on{ran}(\a^*)$. We may thus write
  $x-y=\a^*(\mu)$. But $v=\a(x),\ x-y=\a^*(\mu)$ means that
  $(v,\mu)\sim_{R_\diag}(x,y)$.  Thus
  $v=-\iota_\mu\pi\in\ran\pi^\sharp$, which proves \eqref{eq:equal}.  
  Conversely, assume \eqref{eq:equal} and let $w\in
  \ker(\a)$ be given. Write $w=x-y$ with $x\in E,\ y\in F$, and put
  $v=\a(x)=\a(y)$. 
  By \eqref{eq:equal} there exists $\mu\in T^*M$ with
  $v=-\iota_\mu\pi$. By Proposition \ref{prop:prel}, we have
  $(v,\mu)\sim_{R_\diag} (\ti{x},\ti{y})$, for some $\ti{x}\in E$,
  $\ti{y}\in F$. Thus $\ti{x}-\ti{y}=\a^*(\mu)$ and $\a(\ti{x})=\a(\ti{y})=v$. This gives
  the desired decomposition
\[ w=x-y=\a^*(\mu)+(x-\ti{x})-(y-\ti{y})\]
with $x-\ti{x}\in E\cap \ker(\a)$ and $y-\ti{y}\in F\cap \ker(\a)$. 
\end{proof}
 
\begin{remark}
For an exact Courant algebroid, $\ker(\a)=\ran(\a^*)$ and so
Condition \eqref{eq:condition} is automatic. 
\end{remark}

\subsection{Integrability of the bivector}

\begin{theorem}\label{th:splittings}
Let $\A=E\oplus F$ be a Lagrangian splitting, with associated bivector
$\pi\in\mf{X}^2(M)$, and let
\[ \Upsilon^E\in \Gamma(\wedge^3 F),\ \ \Upsilon^F\in \Gamma(\wedge^3
E)\]
be the Courant tensors, where we are using the isomorphisms $E^*\cong F,\ F^*\cong E$
given by the pairing between $E,F$. Then 
\begin{equation}\label{eq:main}
 \hh [\pi,\pi]=\a(\Upsilon^E)+\a(\Upsilon^F).
\end{equation}
In particular, if $(\A,E,F)$ is a Manin triple over $M$, then $\pi$ is a Poisson
structure. 
The symplectic leaves of that Poisson structure are contained in 
the the connected components of the intersections of the leaves of the Dirac structures 
$E$ and $F$. Under Condition \eqref{eq:condition} they are equal to these
components. 
\end{theorem}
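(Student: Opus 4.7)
My strategy is to push the relation $\Upsilon^{\on{Gr}_\pi}=\hh[\pi,\pi]$ of \eqref{eq:courpo} through the Diagonal morphism $R_\diag\colon \T M\da \A\times\ol{\A}$. Proposition \ref{prop:prel} already identifies $\on{Gr}_{-\pi}$ as the backward image of the Lagrangian subbundle $E\times F\subset \A\times\ol{\A}$ under $R_\diag$, and verifies the non-degeneracy hypothesis \eqref{eq:technical}. Therefore Proposition \ref{prop:natural} applies directly and produces a bundle map $\alpha\colon \on{Gr}_{-\pi}\to \diag^*(E\times F)$ satisfying
\[
\Upsilon^{\on{Gr}_{-\pi}}=\alpha^*\bigl(\diag^*\Upsilon^{E\times F}\bigr).
\]
Since $[-\pi,-\pi]=[\pi,\pi]$, establishing \eqref{eq:main} reduces to unpacking the right-hand side.

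The first step is to compute the Courant tensor of $E\times F$ inside $\A\times\ol{\A}$. The product Courant bracket factors along the two summands, while the pairing on $\ol{\A}$ contributes a minus sign, so all mixed components vanish and
\[
\Upsilon^{E\times F}=\Upsilon^E\oplus(-\Upsilon^F)\in \wedge^3 E^*\oplus \wedge^3 F^*.
\]
The Lagrangian pairings from the splitting provide $E^*\cong F$ and $F^*\cong E$. The second step is to make $\alpha$ explicit: Proposition \ref{prop:prel} shows it sends $(-\iota_\mu\pi,\mu)$ to $(\pr_E\a^*\mu,\,-\pr_F\a^*\mu)$. Using the identification $\on{Gr}_{-\pi}^*\cong TM$ from the $\T M$-pairing (so that $w\in TM$ pairs with $(v,\mu)$ by $\iota_w\mu$), a short dualization computation shows that $\alpha^*$ acts by $y\in F\mapsto \a(y)$ on the first summand and by $x\in E\mapsto -\a(x)$ on the second. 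Extending to $\wedge^3$, the factor $(-1)^3$ on the $E$-side absorbs the minus sign in front of $\Upsilon^F$, yielding $\a(\Upsilon^E)+\a(\Upsilon^F)$ and establishing \eqref{eq:main}.

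For the Manin triple case, both Courant tensors vanish, so $[\pi,\pi]=0$ and $\pi$ is Poisson. The containment of symplectic leaves in intersections is immediate from \eqref{eq:range}: since $\ran(\pi^\sharp)\subset \a(E)\cap\a(F)$, every symplectic leaf of $\pi$ is tangent to both the $E$- and $F$-foliations, hence is contained in the connected component through its base-point of their intersection. Under condition \eqref{eq:condition}, Proposition \ref{prop:leaves} upgrades this containment to an equality of distributions, which forces equality on connected components.

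The delicate part of the proof is the sign bookkeeping. The minus sign from the opposite pairing on $\ol{\A}$, the minus sign in the formula for $\alpha$ coming from $\pr_F\a^*\mu$, and the sign $(-1)^3$ from the third exterior power all need to conspire so that $\Upsilon^E$ and $\Upsilon^F$ enter \eqref{eq:main} additively rather than with opposite signs. Everything else in the argument is either a direct appeal to the earlier propositions or standard Stefan-Sussmann reasoning for the leaf statement.
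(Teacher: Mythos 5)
Your proposal is correct and follows essentially the same route as the paper's own proof: identify $\on{Gr}_{-\pi}$ as the backward image of $E\times F$ under the diagonal morphism (Proposition \ref{prop:prel}), apply Proposition \ref{prop:natural} with $\Upsilon^{E\times F}=\Upsilon^E\oplus(-\Upsilon^F)$, compute $\alpha^*(y,x)=\a(y)-\a(x)$ so the signs combine additively, and deduce the leaf statement from Proposition \ref{prop:leaves}. Your sign bookkeeping matches the paper's computation, so there is nothing to add.
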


\begin{proof}
Since $\on{Gr}_{-\pi}$ is the backward image of $E\times F\subset\A\times\ol{\A}$, Proposition
\ref{prop:natural} shows that its Courant tensor is given by 
\begin{equation}\label{eq:preliminary}
 \Upsilon^{\on{Gr}_{-\pi}}=\alpha^*(\pr_1^*\Upsilon^E-\pr_2^*\Upsilon^F).
\end{equation}
Here $\pr_1^*\colon \wedge F^*\to \wedge(E\times F)^*$ and
$\pr_2^*\colon \wedge F^*\to \wedge(E\times F)^*$ are pull-backs under
the two projections. (Note that the Courant tensor of $F$, viewed as a
subbundle of $\ol{\A}$, is $-\Upsilon^F$.)
The map $\alpha\colon \on{Gr}_{-\pi}\to E\times F$
was computed in the proof of Proposition \ref{prop:prel}: 
\[ \alpha(-\iota_\mu\pi,\mu)=\big(\pr_E(\a^*(\mu)),-\pr_F(\a^*(\mu))\big).\]
To calculate the dual map
\[ \alpha^*\colon F\oplus E\cong E^*\oplus
F^*\to \on{Gr}_{-\pi}^*\cong TM,\] 
let $x\in E, y\in F$ and $\mu\in T^*M$. We have
\[ \begin{split}
\l \alpha^*(y,x),(-\iota_\mu\pi,\mu)\r&=\l (y,x),\
\alpha((-\iota_\mu\pi,\mu))\r\\
&=\l y,\pr_E(\a^*(\mu))\r-\l x,\pr_F(\a^*(\mu))\r\\
&=\l\mu,\a(y)-\a(x)\r.
\end{split}\]
Hence, $\alpha^*(y,x)=\a(y)-\a(x),\ \ x\in E, y\in F$. It follows that
\[ 
\alpha^*(\pr_1^*\Upsilon^E-\pr_2^*\Upsilon^F)=
\a(\Upsilon^E)+\a(\Upsilon^F).\]
On the other hand, as remarked in Section \ref{subsec:dirac} we also
have $\Upsilon^{\on{Gr}_{-\pi}}=\hh [\pi,\pi] $ using the
identification $\on{Gr}_{-\pi}^*\cong TM$. Hence
\eqref{eq:preliminary} translates into \eqref{eq:main}.  If both $E,F$
are Dirac structures the tensors $\Upsilon^E,\Upsilon^F$ vanish, and
\eqref{eq:main} shows that $[\pi,\pi]=0$. The description of the
symplectic leaves follows from Proposition \ref{prop:leaves}.
\end{proof} 

The fact that pairs of transverse Dirac structures define Poisson
structures is due to Mackenzie-Xu \cite{mac:lie}. Their paper
expresses this result in terms of Lie bialgebroids (cf. Remark
\ref{rem:lieb}).

\subsection{Relations of Lagrangian splittings}
Suppose $\A=E\oplus F$ and $\A'=E'\oplus F'$ are Lagrangian splittings
of Courant algebroids over $M,M'$, with associated bivectors
$\pi,\pi'$. Assume also that $R_\Phi\colon \A\da \A'$ is a Courant
morphism. Our goal in this Section is to formulate a sufficient
condition on the two splittings such that the map $\Phi\colon M\to M'$
is a bivector map,
\[ \pi\sim_\Phi\pi',\]
i.e. $(\d\Phi)_m(\pi_m)=\pi'_{\Phi(m)}$. 

Since the condition will only involve linear algebra we will
temporarily just deal with vector spaces $W$ with split bilinear form
$\l\cdot,\cdot\r$.  For instance, if $V$ is any vector space then
$\V=V\oplus V^*$ carries a natural split bilinear form given by the
pairing.  Suppose $W_1\subset W$ is a co-isotropic subspace, and
$W_0=W_1^\perp$ its orthogonal. Then $W_\red=W_1/W_0$ is again a
vector space with split bilinear form. For any Lagrangian subspace
$E\subset W$, the quotient $E_\red=E_1/E_0$ with $E_i=E\cap W_i$ is a
Lagrangian subspace of $E_\red$.

A bivector $\Pi\in\wedge^2(W)$ descends to a bivector $\Pi_\red$ on
$W_1/W_0$ if and only if it lies in the subspace
$\wedge^2(W_1)+W_0\wedge W$, or equivalently
\begin{equation}\label{eq:picond}
 w\in W_1\Rightarrow \iota(w)\Pi\in W_1.
\end{equation}
Here $\iota(w)\colon\wedge W\to \wedge W$
is the derivation extension of $w_1\mapsto \l w_1,w\r$. Note that 
\eqref{eq:picond} implies $w\in W_0\Rightarrow \iota(w)\Pi\in W_0$. 
Letting $w_\red\in W_\red$ be the image of $w\in W_1$, the bivector 
$\Pi_\red\in\wedge^2 W_\red$ is then given by 
\[ \iota(w_\red)\Pi_\red=(\iota(w)\Pi)_\red.\]
\begin{lemma}\label{lem:pilem}
  Suppose that $W=E\oplus F$ is a Lagrangian splitting, and let
  $\Pi\in\wedge^2 W$ be the bivector defined by
  $\iota_w\Pi=\hh(\pr_F(w)-\pr_E(w))$, i.e.
\begin{equation}\label{eq:piinthesky}
 \Pi=\hh \sum_i e_i\wedge f^i
\end{equation}
for dual bases $e_i,f^i$ of $E,F$. Then $\Pi$ descends to a bivector
$\Pi_\red$ on $W_\red$ if and only if $W_0=E_0\oplus F_0$, or
equivalently $W_1=E_1\oplus F_1$. Furthermore, in this
case $E_\red,\ F_\red$ are transverse, and $\Pi_\red$ is given by a 
formula similar to \eqref{eq:piinthesky} with dual bases for
$E_\red,\ F_\red$. 
\end{lemma}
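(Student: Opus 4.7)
The plan is to translate the descent criterion \eqref{eq:picond} into a linear-algebraic condition on the splitting $W = E \oplus F$, establish the claimed symmetry between $W_1$ and $W_0$ by a direct orthogonality computation, and finally read off the formula for $\Pi_\red$.

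The first step is essentially bookkeeping: from $\Pi = \hh \sum_i e_i \wedge f^i$ and $\langle e_i, f^j\rangle = \delta_i^j$ one computes $\iota(w)\Pi = \hh(\pr_F(w) - \pr_E(w))$ for every $w \in W$. Since $w = \pr_E(w) + \pr_F(w)$ already lies in $W_1$, the requirement $\iota(w)\Pi \in W_1$ for all $w \in W_1$ is equivalent to $\pr_E(w), \pr_F(w) \in W_1$, i.e., $W_1 = E_1 + F_1$, the sum being automatically direct since $E \cap F = 0$.

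The heart of the argument is the equivalence $W_1 = E_1 \oplus F_1 \Longleftrightarrow W_0 = E_0 \oplus F_0$. Assuming the left-hand side, I would take orthogonals: $W_0 = W_1^\perp = E_1^\perp \cap F_1^\perp$. Using that $E$ is Lagrangian, for $w = e' + f'$ with $e' \in E, f' \in F$ one has $\langle w, e\rangle = \langle f', e\rangle$ for every $e \in E_1$, whence $E_1^\perp = E \oplus A$ with $A := \{f \in F|\ \langle f, E_1\rangle = 0\}$; symmetrically $F_1^\perp = F \oplus B$ with $B := \{e \in E|\ \langle e, F_1\rangle = 0\}$. Intersecting and using $E \cap F = 0$ gives $W_0 = B \oplus A$, and since $\langle e, W_1\rangle = \langle e, F_1\rangle$ for $e \in E$ (by $E$ Lagrangian together with $W_1 = E_1 + F_1$), we get $B = E \cap W_0 = E_0$ and likewise $A = F_0$. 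The converse is the mirror argument, using $W_0^\perp = W_1$ and the analogous identities $E_1 = \{e \in E|\ \langle e, F_0\rangle = 0\}$, $F_1 = \{f \in F|\ \langle f, E_0\rangle = 0\}$.

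For transversality: if $[e] = [f]$ in $W_\red$ with $e \in E_1, f \in F_1$, then $e - f \in W_0 = E_0 \oplus F_0$, and uniqueness of the $E \oplus F$ decomposition forces $e \in E_0, f \in F_0$, so both classes vanish; the dimension count $\dim E_\red + \dim F_\red = \dim W_1 - \dim W_0 = \dim W_\red$ then yields $W_\red = E_\red \oplus F_\red$. Finally, for any dual bases $\bar e_i, \bar f^j$ of $E_\red, F_\red$, set $\Pi' := \hh \sum_i \bar e_i \wedge \bar f^i$; the bookkeeping of the first paragraph applied in $W_\red$ gives $\iota(\bar w)\Pi' = \hh(\pr_{F_\red}(\bar w) - \pr_{E_\red}(\bar w))$. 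On the other hand, lifting $\bar w$ to $w \in W_1$ and using $\iota(\bar w)\Pi_\red = (\iota(w)\Pi)_\red$ together with $[\pr_E(w)] = \pr_{E_\red}(\bar w), [\pr_F(w)] = \pr_{F_\red}(\bar w)$ (by uniqueness of the reduced splitting), the same expression appears. Non-degeneracy of the reduced pairing forces $\Pi_\red = \Pi'$. The only step requiring real care is the orthogonality computation establishing the $W_1 \leftrightarrow W_0$ symmetry; the rest is a formal unwinding of definitions.
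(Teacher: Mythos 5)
Your proof is correct and follows essentially the same route as the paper: you invoke the descent criterion \eqref{eq:picond} to reduce the question to $W_1=E_1\oplus F_1$, establish the equivalence with $W_0=E_0\oplus F_0$ by orthogonal-complement manipulations using that $E,F$ are Lagrangian, and verify the formula for $\Pi_\red$ by contracting with reduced elements exactly as in the paper. The only differences are cosmetic: you organize the $W_1\leftrightarrow W_0$ equivalence through explicit computations of $E_1^\perp,F_1^\perp$ (resp.\ $E_0^\perp,F_0^\perp$) rather than the paper's single identity $W_0=(E+W_0)\cap(F+W_0)$, and you spell out the transversality $E_\red\cap F_\red=0$ with a dimension count, which the paper leaves implicit.
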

\begin{proof}
Let $\pr_E,\ \pr_F$ denote the projections from $W$ to $E,F$. Thus 
$w=\pr_E(w)+\pr_F(w)$, while on the other hand
$\iota(w)\Pi=\hh(\pr_F(w)-\pr_E(w))$. 
Hence, the condition $w\in W_1\Rightarrow \iota(w)\Pi\in W_1$ holds if
and only if $w\in W_1\Rightarrow \pr_E(w),\pr_F(w)\in W_1$, that is
$W_1=E_1\oplus F_1$. Taking orthogonals, this is equivalent to
$W_0=(E+W_0)\cap (F+W_0)$. We claim that this in turn is equivalent to
$W_0=E_0\oplus F_0$. The implication $\Leftarrow$ is clear. For the
opposite implication $\Rightarrow$ let $w\in W_0$, and write $w=e+f$
with $e=\pr_E(w)$ and $f=\pr_F(w)$. Then $f=w-e\in (E+W_0)\cap
F= (E+W_0)\cap (F+W_0)\cap F=W_0\cap F=F_0$, and similarly $e\in E_0$.

Suppose then that $W_0=E_0\oplus F_0$. For $w\in W_1$, the
decomposition $w=x+y$ with $x=\pr_E(w)\in E_1,\ y=\pr_F(w)\in F_1$
descends to the decomposition $w_\red=x_\red+y_\red$ with $x\in
E_\red$ and $y\in F_\red$. Hence
\[ \iota(w_\red)\Pi_\red=
(\iota(w)\Pi)_\red=\hh(y-x)_\red=\hh (y_\red-x_\red)\]
which shows that $\Pi_\red$ is the bivector for the decomposition
$W_\red=E_\red\oplus F_\red$. 
\end{proof}
\begin{remark}
  The property $E_\red\cap F_\red=0$ is not automatic. Suppose e.g.
  that $w$ is an isotropic vector not contained in $E,F$, and let
  $W_0$ be the 1-dimensional subspace spanned by $w$.  Then
  $x=\pr_E(w)\in E_1$ and $y=-\pr_F(w)\in F_1$ descend to 
  non-zero elements $x_\red\in E_\red,\ y_\red\in F_\red$, 
  with $x_\red-y_\red=w_\red=0$. On the other hand $W_0=E_0\oplus
  F_0$ is not a necessary condition to ensure $E_\red\cap F_\red=0$, as
  one can see by taking $W_0$ to be any Lagrangian subspace transverse
  to $E,F$. (In this case the transversality condition is trivial since
  $W_\red=0$, while $E_0+F_0=0$.)
\end{remark}

A morphism of vector spaces $W,W'$ with split bilinear forms is a
Lagrangian subspace $R\subset W'\times \ol{W}$, where $\ol{W}$ is the
vector space $W$ with opposite bilinear form. As before we write
$R\colon W\da W'$, and 
$w\sim_R w'$ if $(w,w')\in R$. Let 
\begin{equation}\label{eq:wdef1}
\begin{split}
\ker(R)&=\{w\in W|\ w\sim_R 0\},\\ 
\ran(R)&=\{w\in W'|\ \exists\, w\in
W\colon\  w\sim_R w'\}.
\end{split}\end{equation}
The \emph{transpose} of the morphism $R\colon W\da W'$ is the morphism 
$R^t\colon W'\da W$, given by $w'\sim_{R^t} w\ \Leftrightarrow\ 
w\sim_R w'$.
\begin{lemma}\label{lem:isom}
We have $\ker(R)=\ran(R^t)^\perp$ and $\ran(R)=\ker(R^t)^\perp$. 
Furthermore, 
%
\begin{equation}\label{eq:dimensions}
\begin{split}
 \dim\ker(R)+\dim\ran(R)&=\dim R.\\
\end{split}
\end{equation}
The morphism $R\colon W\da W'$ gives a well-defined linear isomorphism
\[ R_\red\colon \ran(R^t)/\ker(R)\to \ran(R)/\ker(R^t)\]
with the property 
\[ w\sim_R w'\ \Rightarrow \ R_\red(w_\red)=w'_\red.\] 
Here $w_\red,w'_\red$ are the images of $w\in \ran(R^t)$ and 
$w'\in \ran(R)$ in the two quotient spaces. 
\end{lemma}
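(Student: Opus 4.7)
The plan is to handle the three assertions of the lemma in turn, leveraging the fact that $R \subset W' \times \overline{W}$ is Lagrangian for the split form $\langle (w_1',w_1),(w_2',w_2)\rangle = \langle w_1',w_2'\rangle - \langle w_1,w_2\rangle$.

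First I would establish the duality $\ker(R) = \ran(R^t)^\perp$ by a direct computation. Unwinding definitions, $w \in \ker(R)$ iff $(0,w) \in R$. Since $R$ is Lagrangian, this is equivalent to $\langle (0,w),(w',v)\rangle = 0$ for every $(w',v) \in R$, which simplifies to $\langle w,v\rangle = 0$ for every $v \in \ran(R^t)$. Applying this same reasoning to $R^t \colon W' \da W$ in place of $R$ yields $\ker(R^t) = \ran(R)^\perp$, and taking orthogonals gives $\ran(R) = \ker(R^t)^\perp$ since the form on $W'$ is non-degenerate.

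Next, for the dimension formula, I would apply rank-nullity to the projection $\operatorname{pr}_{W'} \colon R \to W'$. By definition its image is $\ran(R)$, and its kernel is $\{(0,w) \in R\}$, which is identified with $\ker(R)$ via the second projection. Hence $\dim R = \dim\ker(R) + \dim\ran(R)$.

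Finally, to construct $R_\red$, I would send $w_\red$ (for $w \in \ran(R^t)$, with some chosen $w' \in W'$ satisfying $w \sim_R w'$) to $w'_\red \in \ran(R)/\ker(R^t)$. Well-definedness has two parts: changing $w$ by an element of $\ker(R)$ allows $w'$ to be left unchanged by linearity of $R$, and changing the choice of $w'$ (for fixed $w$) replaces it by another element of the same coset, since any two such $w'$ differ by an element $w'-w''$ satisfying $(w'-w'',0) \in R$, i.e.\ in $\ker(R^t)$. Linearity of $R_\red$ is inherited from the linearity of $R$. Surjectivity is immediate from the definition of $\ran(R)$. For injectivity, if $R_\red(w_\red) = 0$ then $w \sim_R w'$ with $w' \in \ker(R^t)$, i.e.\ $0 \sim_R w'$, so subtracting gives $w \sim_R 0$ and hence $w \in \ker(R)$. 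No step is genuinely hard; the main point to be careful about is bookkeeping of which inner product sign convention goes with which factor, which is why I would do the $\ker(R) = \ran(R^t)^\perp$ computation explicitly and derive the other identity by applying it to $R^t$ rather than repeating the calculation.
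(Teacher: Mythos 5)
Your proof is correct, and its skeleton matches the paper's: the same rank--nullity argument for \eqref{eq:dimensions} applied to the projection of $R$ onto $W'$, and the same construction of $R_\red$ by choosing a related element and checking independence of the choices. The genuine divergence is in the orthogonality identities. You use the full Lagrangian condition $R=R^\perp$ directly: $w\in\ker(R)$ means $(0,w)\in R$, which you characterize in one stroke as $(0,w)\perp R$, i.e. $w\perp\ran(R^t)$ (the maximality direction $R^\perp\subset R$ is exactly what gives the reverse inclusion), and then you obtain $\ran(R)=\ker(R^t)^\perp$ by running the same argument for $R^t$ and taking double orthogonals. The paper instead uses only the isotropy of $R$ (related pairs have equal pairings) to get the inclusions $\ker(R)\subset\ran(R^t)^\perp$ and $\ran(R)\subset\ker(R^t)^\perp$, and upgrades them to equalities by the dimension bookkeeping of \eqref{eq:useful} and \eqref{eq:ok}, which rests on $\dim R=\hh(\dim W+\dim W')$; as a by-product it gets $\dim W_\red=\dim W'_\red$, and that equality (together with surjectivity, which is immediate) is what lets it call $R_\red$ an isomorphism without a separate injectivity argument. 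You avoid that counting and instead prove injectivity of $R_\red$ explicitly (if $w\sim_R w'$ with $w'\in\ker(R^t)$, subtract $(w',0)\in R$ to conclude $w\in\ker(R)$), which is arguably cleaner and more self-contained; the paper's route has the advantage of producing along the way the dimension identities that it states and reuses (e.g. \eqref{eq:dimensions} is invoked later for $\ran(Q_\Mult)$), and of isolating exactly where isotropy versus maximality of $R$ enters.
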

\begin{proof}
We will write 
\[ W_0=\ker(R),\ W_1=\ran(R^t),\ W_0'=\ker(R^t),\ W_1'=\ran(R)\]
  and $W_\red=W_1/W_0,\ W'_\red=W_1'/W_0'$.  The dimension formula
  \eqref{eq:dimensions} follows since the spaces $W_0$ and $W_1'$ are
  the kernel and range of the projection of $R$ onto $W'$. Taking the
  sum and the difference with a similar equation for $R^t$, we obtain 
\begin{equation}\label{eq:useful}
\begin{split}
& (\dim W_0+\dim W_1)+(\dim W_0'+\dim W_1')=\dim W+\dim W',\\
& \dim W_1-\dim W_0=\dim W_1'-\dim W_0'.
\end{split}\end{equation}
The second equation shows that $\dim W_\red=\dim W'_\red$.

Recall $w_i\sim_R w_i'$
  for $i=1,2$ implies $\l w_1,w_2\r=\l w_1',w_2'\r$. If $w_1\in W_0$ so that 
$w_1\sim 0$, and $w_2\in W_1$ so that $w_2\sim w_2'$ for some $w_2'\in W'$, 
this shows $\l w_1,w_2\r=\l 0,w_2'\r=0$. Thus $W_0\subset W_1^\perp$, 
and similarly $W_1'\subset (W_0')^\perp$. To prove equality
we observe that 
\begin{equation}\label{eq:ok}
\dim W_0+\dim W_1=\dim W,\ \ \dim W_0'+\dim W_1'=\dim W'.\end{equation}
Indeed we already know the inequality $\le$ in these equations, and
the equality follows by the first equation in \eqref{eq:useful}.
Finally, if $w\sim_R w'$, and $w-\ti{w}\in W_0$, $w'-\ti{w}'\in W_0'$,
then $\ti{w}\sim_R \ti{w}'$.  This shows that the map $R_\red\colon
W_\red\to W'_\red,\ w_\red\mapsto w'_\red$ is a well-defined isomorphism.
\end{proof}

\begin{definition}\label{def:lagrel}
  Given a morphism $R\colon W\da W'$ and Lagrangian subspaces
  $E\subset W,\ E'\subset W'$, we write
\begin{equation}\label{eq:lagrel}
 E\sim_R E'\end{equation}
if the isomorphism $R_\red$ from Lemma \ref{lem:isom} takes $E_\red$
to $E'_\red$. Given Lagrangian splittings $W=E\oplus F$ and
$W'=E'\oplus F'$, we write 
\begin{equation}\label{eq:lagrel2}
 (E,F)\sim_R (E',F')
\end{equation}
if $E\sim_{R} E',\ \ F\sim_{R} F'$, and in addition 
\begin{enumerate}
\item $\ker(R)$ is the direct sum of its intersections with
      $E$ and with $F$, 
\item $\ran(R)$ is the direct sum of its intersections with
      $E'$ and with $F'$.
\end{enumerate}
\end{definition}

\begin{remark}
  The relation \eqref{eq:lagrel} follows if $E=E'\circ R$ (b-Dirac
  morphism) or $E'=R\circ E$ (f-Dirac morphism). Conversely, if
  $\ker(R)=0$ the relation \eqref{eq:lagrel} implies $E=E'\circ R$,
  while for $\ran(R)=W'$ it implies $E'=R\circ E$. In general,
  \eqref{eq:lagrel} implies that $R$ is an f-Dirac morphism followed
  by a b-Dirac morphism:
\[ (W,E)\da (W'_\red,E'_\red)\da (W',E')\] 
with $W'_\red=\ran(R)/\ran(R)^\perp$. Note however that 
$E\sim_R E'$ and $E'\sim_S E''$ do not imply $E\sim_{S\circ
R}E''$ in general.
\end{remark}

\begin{proposition}
  Let $W=E\oplus F$, $W'=E'\oplus F'$ be Lagrangian splittings,
  defining bivectors $\Pi,\Pi'$, and $R\colon W\da W'$ a morphism with
  $(E,F)\sim_R (E',F')$. Then the induced map $R_\red\colon W_\red\to
  W'_\red$ takes $\Pi_\red$ to $\Pi'_\red$.
\end{proposition}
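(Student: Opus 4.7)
The plan is to reduce everything to linear algebra by combining Lemmas \ref{lem:pilem} and \ref{lem:isom}: first check that both bivectors descend to the reduced spaces as the bivectors of the reduced splittings, then observe that $R_\red$ is a linear isometry carrying one reduced splitting to the other, and finally conclude from the explicit formula \eqref{eq:piinthesky}.

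First, adopt the notation $W_0=\ker(R),\ W_1=\ran(R^t),\ W_0'=\ker(R^t),\ W_1'=\ran(R)$, so that by Lemma \ref{lem:isom} we have $W_0=W_1^\perp,\ W_0'=(W_1')^\perp$, and $W_\red=W_1/W_0,\ W'_\red=W_1'/W_0'$. Condition (a) of Definition \ref{def:lagrel} gives $W_0=E_0\oplus F_0$, so Lemma \ref{lem:pilem} applies and $\Pi$ descends to a bivector $\Pi_\red$ on $W_\red$; moreover, the last sentence of Lemma \ref{lem:pilem} identifies $\Pi_\red$ as the bivector associated to the reduced splitting $W_\red=E_\red\oplus F_\red$. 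Condition (b) gives $W_1'=E_1'\oplus F_1'$, and dually (i.e.\ applying the same lemma to the coisotropic pair $W_0'\subset W_1'$ of $W'$) yields that $\Pi'$ descends to $\Pi'_\red$, the bivector of the splitting $W'_\red=E'_\red\oplus F'_\red$.

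Next I verify that $R_\red$ is an isometric isomorphism. Since $R\subset W'\times \ol{W}$ is Lagrangian, any $(w_i',w_i)\in R$ satisfy $\l w_1',w_2'\r=\l w_1,w_2\r$. Picking representatives $w_i\in W_1$ with $w_i\sim_R w_i'$, the induced bilinear forms on the quotients satisfy
\[
\l w_{1,\red},w_{2,\red}\r_{W_\red}=\l w_1,w_2\r_W=\l w_1',w_2'\r_{W'}=\l R_\red(w_{1,\red}),R_\red(w_{2,\red})\r_{W'_\red},
\]
so $R_\red$ preserves the split bilinear forms. By hypothesis $R_\red$ also carries $E_\red$ onto $E'_\red$ and $F_\red$ onto $F'_\red$.

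Finally, choose a basis $e_i$ of $E_\red$ with dual basis $f^i$ of $F_\red$ (under the pairing $E_\red\times F_\red\to \R$). Because $R_\red$ is an isometry, the vectors $R_\red(e_i)$ and $R_\red(f^i)$ form dual bases of $E'_\red$ and $F'_\red$. Applying formula \eqref{eq:piinthesky} from Lemma \ref{lem:pilem} to both splittings gives
\[
(R_\red)_*\Pi_\red=\tfrac12\sum_i R_\red(e_i)\wedge R_\red(f^i)=\Pi'_\red,
\]
which is the desired identity. The only step that is not purely formal is the isometry property of $R_\red$, but this is an immediate consequence of the Lagrangian condition on $R$, so no real obstacle should arise.
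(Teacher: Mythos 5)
Your proof is correct and follows essentially the same route as the paper: apply Lemma \ref{lem:pilem} (via conditions (a), (b) of Definition \ref{def:lagrel}) to identify $\Pi_\red,\Pi'_\red$ as the bivectors of the reduced splittings, then use $R_\red(E_\red)=E'_\red$, $R_\red(F_\red)=F'_\red$. The only difference is that you spell out the isometry of $R_\red$ (a consequence of $R$ being Lagrangian, already noted in the proof of Lemma \ref{lem:isom}), which the paper leaves implicit.
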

\begin{proof}
This follows since $\Pi_\red,\Pi'_\red$ are the bivectors defined by the splittings
$W_\red=E_\red\oplus F_\red$ and $W_\red'=E_\red'\oplus F_\red'$, and
since $R_\red(E_\red)=E_\red',\ R_\red(F_\red)=F_\red'$. 
\end{proof}

We now apply these results to Courant morphisms $R_\Phi\colon \A\da
\A'$ over manifolds $\Phi\colon M\to M'$. For Lagrangian splittings
$\A=E\oplus F$ and $\A'=E'\oplus F'$ we will write
\[ (E,F)\sim_{R_\Phi} (E',F')\]
if this relation holds pointwise, at any $m\in M$.  The main result of
this Section is as follows:


\begin{theorem}
  Let $\A=E\oplus F$ and $\A'=E'\oplus F'$ be Lagrangian splittings of
  Courant algebroids $\A,\ \A'$, defining bivectors
  $\pi\in\mf{X}^2(M),\ \pi'\in\mf{X}^2(M')$.
  Suppose 
\[ R_\Phi\colon \A\da \A'\] is a Courant morphism. Then 
\[ (E,F)\sim_{R_\Phi} (E',F')\  \Rightarrow\  
\pi\sim_\Phi \pi'.\]
\end{theorem}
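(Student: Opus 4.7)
The claim is pointwise, so fix $m \in M$, set $m' = \Phi(m)$, and write $W = \A_m$, $W' = \A'_{m'}$, $R = (R_\Phi)_m \subseteq W' \times \ol W$. Let $\Pi_m \in \wedge^2 W$ and $\Pi'_{m'} \in \wedge^2 W'$ be the canonical bivectors of Lemma \ref{lem:pilem} for the two Lagrangian splittings, so that formula \eqref{eq:piformula} reads $\pi_m = \wedge^2 \a_m(\Pi_m)$ and $\pi'_{m'} = \wedge^2 \a'_{m'}(\Pi'_{m'})$. By hypothesis $(E_m, F_m) \sim_R (E'_{m'}, F'_{m'})$, so the preceding Proposition yields $\wedge^2 R_\red(\Pi_{m,\red}) = \Pi'_{m',\red}$ in $\wedge^2 W'_\red$. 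The plan is to transfer this identity to the tangent bundles via the anchor maps.

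The crucial ingredient is the tangent-to-graph axiom m2), which gives $\d\Phi_m(\a_m(x)) = \a'_{m'}(x')$ whenever $x \sim_R x'$. Applying this to $x \in \ker R$ (so $x \sim_R 0$) yields $\a_m(\ker R) \subseteq \ker \d\Phi_m$, and applying it to $x' \in \ker R^t$ (so $0 \sim_R x'$) yields $\a'_{m'}(\ker R^t) = 0$. Consequently $\wedge^2 \d\Phi_m \circ \wedge^2 \a_m$ annihilates $\ker R \wedge W$ and $\wedge^2 \a'_{m'}$ annihilates $\ker R^t \wedge W'$; thus both $\wedge^2 \d\Phi_m(\pi_m)$ and $\pi'_{m'}$ are determined by the reductions $\Pi_{m,\red}$ and $\Pi'_{m',\red}$ alone.

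To finish, exploit the direct sum conditions in Definition \ref{def:lagrel}. Condition (i), via Lemma \ref{lem:pilem}, gives $\ran R^t = (E \cap \ran R^t) \oplus (F \cap \ran R^t)$, so one can choose lifts $\ti e_i \in E \cap \ran R^t$ and $\ti f^i \in F \cap \ran R^t$ whose classes $[\ti e_i], [\ti f^i]$ are dual bases of $E_\red, F_\red$ in $W_\red$ (and hence are dual in $W$, since the pairing descends faithfully to $W_\red$). Because $R_\red$ is an isometry intertwining the two splittings, it carries $[\ti e_i], [\ti f^i]$ to dual bases of $E'_\red, F'_\red$; by condition (ii) applied on the primed side, these may be lifted to $\ti e'_i \in E' \cap \ran R$ and $\ti f'^i \in F' \cap \ran R$ with $\ti e_i \sim_R \ti e'_i$ and $\ti f^i \sim_R \ti f'^i$. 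Then $\hh \sum \ti e_i \wedge \ti f^i$ differs from $\Pi_m$ by an element of $\ker R \wedge W$, and $\hh \sum \ti e'_i \wedge \ti f'^i$ differs from $\Pi'_{m'}$ by an element of $\ker R^t \wedge W'$, so
\[
\wedge^2 \d\Phi_m(\pi_m) = \hh \sum_i \d\Phi_m(\a_m(\ti e_i)) \wedge \d\Phi_m(\a_m(\ti f^i)) = \hh \sum_i \a'_{m'}(\ti e'_i) \wedge \a'_{m'}(\ti f'^i) = \pi'_{m'},
\]
which is the desired relation $\pi \sim_\Phi \pi'$.

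The main technical point is the selection of compatible dual-basis lifts on the two sides of $R$; this is precisely what the two direct sum conditions in Definition \ref{def:lagrel} guarantee, and the rest is routine manipulation with the anchor maps using the tangent-to-graph axiom.
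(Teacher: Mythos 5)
Your proof is correct and follows essentially the same route as the paper's: fix $m\in M$, use axiom m2) to get $\d\Phi_m\circ\a_m=0$ on $\ker(R_\Phi)_m$ and $\a'_{\Phi(m)}=0$ on $\ker(R_\Phi^t)_{\Phi(m)}$, and use conditions (a),(b) of Definition \ref{def:lagrel} together with Lemma \ref{lem:pilem} and Lemma \ref{lem:isom} to match the reduced bivectors under $R_\red$ before pushing them to the tangent spaces. The only divergence is the final bookkeeping --- the paper ends with a commutative diagram of descended anchor maps, whereas you pick explicit $R_\Phi$-related dual-basis lifts $\ti e_i\sim_{R_\Phi}\ti e'_i$, $\ti f^i\sim_{R_\Phi}\ti f'^i$ --- and the one containment you assert without justification, namely that $\Pi_m-\hh\sum_i\ti e_i\wedge\ti f^i$ lies in $\ker(R)\wedge \A_m$ (and likewise its primed analogue in $\ker(R^t)\wedge\A'_{\Phi(m)}$), is indeed a correct consequence of the direct-sum condition $\ker(R)=(E_m\cap\ker R)\oplus(F_m\cap\ker R)$, by essentially the computation in Lemma \ref{lem:pilem}, so there is no gap.
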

\begin{proof}
  We have to show $\pi_{\Phi(m)}=(d\Phi)_m(\pi_m)$ for all $m\in M$. To
  simplify notation, we omit base points in the following discussion.
Let $\Pi,\Pi'$ be the bivectors defined by the splittings, so that
$\pi=\a(\Pi)$ and $\pi'=\a'(\Pi')$. We define 
\[ \A_\red=\ker(R_\Phi)^\perp/\ker(R_\Phi),\ \ \A'_\red=\ran(R_\Phi)/\ran(R_\Phi)^\perp,\]
so that $R_\Phi$ descends to an isomorphism $\A_\red\to \A'_\red$
taking $\Pi_\red$ to $\Pi'_\red$.  Since $x\sim_{R_\Phi} x'$ implies
$(\d\Phi)(\a(x))=\a'(x')$, we see that $\a'(\ker(R_\Phi)^\perp)=0$ and
$\a'(\ran(R_\Phi))\subset \on{ran}(\d\Phi)$.  Similarly
$\a(\ker(R_\Phi))\subset \ker(\d\Phi)$. Hence $\a,\a'$ descend to
define the vertical maps of the following commutative diagram,
\[\begin{CD} \A_\red @>>{\cong}> \A'_\red\\
@VV{\a_\red}V @VV{\a'_\red}V \\
TM/\ker(\d\Phi) @>>{\cong}>  \on{ran}(\d\Phi)
\end{CD}\]
The bottom map takes $\pi_\red=\a_\red(\Pi_\red)$ to
$\pi'_\red=\a'_\red(\Pi'_\red)$.  On the other hand, Lemma
\ref{lem:pilem} shows that $\pi'$ is the image of $\pi'_\red$ under
the inclusion $\on{ran}(\d\Phi)\to TM'$, while $\pi_\red$ is the image
of $\pi$ under the projection $TM\to TM/\ker(\d\Phi)$. This proves
that $\d\Phi\colon TM\to TM'$ takes $\pi$ to $\pi'$.
\end{proof}

\section{Courant algebroids of the form $M\times\g$}\label{sec:cou}
An \emph{action of a Lie algebra $\g$} on a manifold $M$ is a Lie
algebra homomorphism $\g\to \mf{X}(M)$. The range of the action map
$\a\colon M\times\g\to TM$ defines an integrable generalized
distribution; its leaves are called the \emph{$\g$-orbits}. The action
map makes $M\times\g$ into a Lie algebroid over $M$, with anchor map
$\a$ and Lie bracket $[\cdot,\cdot]_\mf{X}$ extending the Lie bracket
on constant sections. Explicitly,
\begin{equation}\label{eq:liealgebroid}
 [x_1,x_2]_{\mf{X}}=[x_1,x_2]_\g+\L_{\a(x_1)}x_2-\L_{\a(x_2)}x_1
\end{equation}
where $[x_1,x_2]_\g$ is the pointwise bracket. 
\subsection{$\g$-actions}\label{subsec:cou}
Let $\g$ be a quadratic Lie algebra, with inner product
$\l\cdot,\cdot\r$. Given a $\g$-action on a manifold $M$, all of
whose stabilizer are co-isotropic, Example \ref{ex:important}
describes a Courant algebroid structure on the product $\A_M=M\times\g$,
with anchor map $\a\colon M\times\g\to TM$ given by the action. 
Recall that the inner product $\l\cdot,\cdot\r$ and Courant bracket
$\Cour{\cdot,\cdot}$ on $\A_M$ extend the inner product and Lie
bracket on constant sections
$\g\subset \Gamma(\A_M)$. 

\begin{lemma}
The Courant bracket and the Lie algebroid bracket on
$\Gamma(\A_M)=C^\infty(M,\g)$ are related as follows: 
\begin{equation}\label{eq:formula}
 \Cour{x,y}=[x,y]_\X+\a^*\l \d x,y\r, \ \ x,y\in C^\infty(M,\g).
\end{equation}
\end{lemma}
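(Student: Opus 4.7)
The plan is to verify the identity by reducing to pairs of constant sections in $\g \subset C^\infty(M,\g)$, on which both sides manifestly agree with the Lie bracket on $\g$. Since the Courant bracket, the Lie algebroid bracket $[\cdot,\cdot]_{\X}$, and the term $\a^*\l \d\cdot\,,\cdot\r$ are all $\R$-bilinear, it suffices to check \eqref{eq:formula} for sections of the form $x = f x_0$ and $y = g y_0$ with $f,g \in C^\infty(M)$ and $x_0,y_0 \in \g$ constant. For two constant sections one has $\d x_0 = 0$, while the Lie derivative terms in \eqref{eq:liealgebroid} vanish, so both sides collapse to $[x_0,y_0]_\g$, which equals $\Cour{x_0,y_0}$ by construction of the Courant bracket on $\A_M$.

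Next I would extend the identity in the second slot. Fix a constant $x_0 \in \g$. By axiom p1 the map $y \mapsto \Cour{x_0, y}$ is a derivation over the vector field $\a(x_0)$; and since $\d x_0 = 0$, the right-hand side of \eqref{eq:formula} at $x = x_0$ reduces to $y \mapsto [x_0, y]_{\X}$, which is likewise a derivation over $\a(x_0)$ by the Lie algebroid Leibniz rule for $\A_M$. Two derivations over the same vector field that agree on constants agree on every section, so $\Cour{x_0, y} = [x_0, y]_{\X}$ for all $y \in \Gamma(\A_M)$ and all constant $x_0$.

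It then remains to extend in the first slot. For $x = f x_0$ I apply axiom c3 followed by p1 to obtain
\[
\Cour{f x_0, y} = -\Cour{y, f x_0} + \a^*\d\l f x_0, y\r
= -f\Cour{y, x_0} - (\a(y) f)\, x_0 + f\,\a^*\d\l x_0, y\r + \l x_0, y\r\,\a^*\d f,
\]
using $C^\infty$-linearity of $\a^*$ and the Leibniz rule for $\d$. Axiom c3 combined with the previous step gives $\Cour{y, x_0} = -[x_0, y]_{\X} + \a^*\d\l x_0, y\r$, and substituting this cancels two terms to leave
\[
\Cour{f x_0, y} = f[x_0, y]_{\X} - (\a(y) f)\, x_0 + \l x_0, y\r\,\a^*\d f.
\]
On the other hand, using $\d(f x_0) = (\d f)\, x_0$ together with the skew-symmetry and Leibniz rule for $[\cdot,\cdot]_{\X}$, the right-hand side of \eqref{eq:formula} at $x = f x_0$ expands to the same expression.

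The verification is otherwise routine; the one point of care is the scalar term $\l x_0, y\r\,\a^*\d f$, which appears on the left from the symmetric part of the Courant bracket (axiom c3) and on the right from $\a^*\l(\d f)\, x_0, y\r$. Its correct matching is what makes the identity work, and I expect this to be the only delicate bookkeeping step of the argument.
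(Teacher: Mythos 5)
Your proof is correct and rests on exactly the same mechanism as the paper's (very terse) argument: the bracket is pinned down by its values on constant sections together with c3) and p1), and the right-hand side matches on constants. You have simply written out explicitly the extension from constants that the paper leaves implicit, and your bookkeeping of the term $\l x_0,y\r\,\a^*\d f$ is accurate.
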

\begin{proof}
  The Courant bracket is uniquely determined by its property of
  extending the Lie bracket on constant sections.  It hence suffices
  to note that the right hand side restricts to the pointwise bracket
  on constant sections, and that it satisfies c3) and p1) from
  \ref{subsec:def}.
\end{proof}

\subsection{Examples}
\begin{example} \label{ex:g}
  Let $\g$ be a quadratic Lie algebra, and $\ol{\g}$ the same Lie algebra with the opposite
  bilinear form. Then $\dd=\g\oplus \ol{\g}$ is a quadratic
  Lie algebra.  Let $G$ be a Lie group with Lie
  algebra $\g$, and denote by $u^L$ resp. $u^R$ the left-invariant
  resp.  right-invariant vector fields on $G$ defined by $u\in\g$. 
The action of $D=G\times G$ on $G$, given by $(a,b).g=agb^{-1}$,
defines an action of $\dd$, 
\[ \a\colon\dd\to TG,\ (u,v)\mapsto v^L-u^R\]
The stabilizers for the infinitesimal action are co-isotropic: In
fact,
\[ \ker(\a_g)=\dd_g=\{(u,v)|\, \ u=\Ad_g(v)\}\] 
is Lagrangian. Thus $\A_G=G\times\dd$ is a Courant algebroid. This is
extensively studied in \cite{al:pur}, where an explicit isomorphism
$\A_G\cong \TG^{(\eta)}$ is constructed (cf. Definition
\ref{def:standard}), with
$\eta=\f{1}{12}\theta^L\cdot[\theta^L,\theta^L]\in \Om^3(G)$ the
Cartan 3-form (where $\theta^L\in\Om^1(G,\g)$ is the left-invariant
Maurer-Cartan form.
\end{example}

\begin{example}[De Concini-Procesi compactification]
  Let $G$ be a complex semisimple Lie group of adjoint type, and
  $\dd=\g\oplus \ol{\g}$ as above. Let $M$ be the complex manifold
  given as its de Concini-Procesi `wonderful compactification'
  \cite{ev:won}. The action of $D=G\times G$ on $G$ 
(given by $(a,b).g=agb^{-1}$) extends to an
  action on $M$, and $\A_M=M\times\dd$ is a Courant algebroid. (The
  fact that elements in $\ran(\a^*)$ are isotropic follows by
  continuity.)
\end{example}

\begin{example}[Homogeneous spaces $G/H$]\label{ex:homogeneous}
  Let $\g$ be a quadratic Lie algebra, with corresponding Lie group
  $G$, and suppose that $H\subset G$ is a closed subgroup whose Lie
  algebra $\h$ is coisotropic. Then the stabilizer algebras for the
  induced $G$-action on $G/H$ are coisotropic. (The stabilizer algebra
  at the coset of $g\in G$ is $\Ad_g(\h)$.) Hence we obtain a Courant
  algebroid $\A_{G/H}=G/H\times\g$. In case $\h$ is a Lagrangian
  subalgebra, this example is discussed by P. {\v{S}}evera in
  \cite[\#2]{sev:let} and by Alekseev-Xu in \cite{al:der}, see also
  \cite{bur:cou}.
\end{example}

\begin{example}[The variety of Lagrangian subalgebras]
  A quadratic Lie algebra $\dd$ will be called \emph{split quadratic}
  if its bilinear form is split, i.e. if there exist Lagrangian
  subspaces.  For instance the Lie algebra $\g\oplus\ol{\g}$ from
  Example \ref{ex:g} is split quadratic, as is the semi-direct product
  $\k^*\rtimes\k$ for any Lie algebra $\k$. Given a split quadratic
  Lie algebra $\dd$ let $\on{Lag}(\dd)$ be the manifold of Lagrangian
  subspaces of $\dd$, and $M\subset \on{Lag}(\dd)$ the subset of
  Lagrangian Lie subalgebras. Then $M$ is not a manifold, but (working
  over $\C$) it is a (complex) variety.  Let $D$ be a Lie group
  exponentiating $\dd$, with its natural action on $M$. Then the
  stabilizer algebra at $m\in M$ contains the Lagrangian subalgebra
  labelled by $m$, and hence is co-isotropic.  Consequently
  $\A_M=M\times\dd$ is a Courant algebroid. A similar argument applies
  more generally to the \emph{variety of co-isotropic subalgebras} of
  any given dimension.
\end{example}

These examples are related: For instance, as shown by Evens-Lu
\cite{ev:on} the de Concini-Procesi compactification of $G$ is one of
the irreducible components of the variety of Lagrangian subalgebras of
$\dd=\g\oplus \ol{\g}$.  Example \ref{ex:g} is a special case of
\ref{ex:homogeneous}, taking the quotient of $D=G\times G$ by the
diagonal subgroup.

\begin{example}
Given a manifold $N$ with an action of $\g$, define a section $\gamma$
of the bundle $S^2(TN)$ by $\gamma(\mu,\nu)=\l\a^*(\mu),\a^*(\nu)\r$.
Then $M=\gamma^{-1}(0)$ is $\g$-invariant, and (assuming for
simplicity that $\gamma^{-1}(0)$ is smooth) $\A_M=M\times\g$ is a Courant
algebroid.
\end{example}

\subsection{Courant morphisms}\label{subsec{courmo}}
We will now consider morphisms between Courant algebroids of the form
$M\times\g$. 
\begin{proposition}\label{prop:generalpicture}
  Suppose $\g,\g'$ are quadratic Lie algebras, acting on $M,M'$ 
with co-isotropic stabilizers. Let $\A_M=M\times\g,\ \A_{M'}=M'\times\g'$ be
  the resulting Courant algebroids, with anchor map the action maps
  $\a,\a'$. Assume $\Phi\colon M\to M'$ is a
  smooth map and $R\subset \g'\oplus\ol{\g}$ is a Lagrangian
  subalgebra, with the property
\begin{equation}\label{eq:equivariance}
 x\sim_R x'\ \Rightarrow\ \a(x)\sim_\Phi \a'(x').
\end{equation}
Then $R_\Phi=\on{Graph}(\Phi)\times R$ defines a Courant morphism
\[ R_\Phi\colon \A_M \da \A_{M'},\ \ (m,x)\sim_{R_\Phi} (\Phi(m),x').\]
If $\g=\dd,\ \g'=\dd'$ are split quadratic, then 
Lagrangian splittings $\dd=\mf{e}\oplus \mf{f},\
\dd'=\mf{e}'\oplus \mf{f}'$ with 
$(\mf{e},\mf{f})\sim_R (\mf{e}',\mf{f}')$ give 
$R_\Phi$-related Lagrangian splittings of the Courant algebroids, 
\begin{equation}\label{eq:rs}
 (E,F)\sim_{R_\Phi} (E',F')\end{equation}
with $E=M\times\mf{e},\ F=M\times\mf{f},\ E'=M\times\mf{e}',\ 
F'=M\times\mf{f}'$. Hence the corresponding bivector fields are
$\Phi$-related: $\pi\sim_\Phi\pi'$.
\end{proposition}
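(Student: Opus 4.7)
The plan is to identify the product $\A_{M'}\times\ol{\A_M}$ as the action Courant algebroid for the natural $(\g'\oplus\ol{\g})$-action on $M'\times M$, and then recognize $R_\Phi$ as the restriction to $\on{Graph}(\Phi)$ of the Dirac structure associated to the Lagrangian subalgebra $R\subset\g'\oplus\ol\g$.

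First, I would note that $\g'\oplus\ol{\g}$ acts on $M'\times M$ via the product of the given actions, and that its stabilizers (direct sums of the stabilizers in $\g'$ and $\g$) are co-isotropic in $\g'\oplus\ol\g$. The construction of Section \ref{sec:cou} then equips $(M'\times M)\times(\g'\oplus\ol\g)$ with the structure of a Courant algebroid; using \eqref{eq:formula} and the fact that the inner product and bracket on constant sections agree with those of $\g'\oplus\ol\g$, this Courant algebroid is canonically identified with $\A_{M'}\times\ol{\A_M}$. Next, I would verify that the constant sub-bundle $(M'\times M)\times R$ is a Dirac structure: it is Lagrangian since $R$ is Lagrangian in $\g'\oplus\ol\g$, and by \eqref{eq:formula}, the Courant bracket of two $R$-valued sections has its $\a^*$-term killed by $\langle x,y\rangle=0$, while the Lie-algebroid term $[x,y]_{\mf X}$ preserves $R$ because $R$ is pointwise a Lie subalgebra. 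The leaves of this Dirac structure are generated by the vector fields $(\a'(x'),\a(x))$ for $(x',x)\in R$, and hypothesis \eqref{eq:equivariance} is precisely the condition that these vector fields be tangent to $\on{Graph}(\Phi)$. By Example \ref{ex:examples}(3) (restriction of a Dirac structure to a submanifold tangent to its foliation), the restriction $\on{Graph}(\Phi)\times R = R_\Phi$ is then a Courant morphism.

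For the compatibility of the splittings, the fiber of $R_\Phi$ at every point is canonically identified with $R\subset\g'\oplus\ol\g$, and the fibers of $E,F,E',F'$ are constant equal to $\mf e,\mf f,\mf e',\mf f'$. The pointwise conditions of Definition \ref{def:lagrel} defining $(E,F)\sim_{R_\Phi}(E',F')$ therefore reduce fiberwise to exactly the hypothesis $(\mf e,\mf f)\sim_R(\mf e',\mf f')$, so \eqref{eq:rs} holds. The conclusion $\pi\sim_\Phi\pi'$ then follows immediately from the final theorem of Section \ref{sec:man} applied to the Courant morphism $R_\Phi$ and the related splittings. The main bookkeeping concern is the identification of the product Courant algebroid with an action Courant algebroid, but both structures are determined by their values on constant sections by \eqref{eq:formula}, so this is essentially automatic; the substantive content is that the entire morphism comes by restriction from a Dirac structure on $M'\times M$.
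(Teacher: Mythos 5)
Your proposal is correct and follows essentially the same route as the paper: the paper likewise observes that $(M'\times M)\times R$ is a Dirac structure in the product Courant algebroid (which is exactly the action Courant algebroid for $\g'\oplus\ol{\g}$ on $M'\times M$), uses \eqref{eq:equivariance} to get tangency of its foliation to $\on{Graph}(\Phi)$ so that Example \ref{ex:examples}(c) applies, and disposes of the splitting statement by noting that \eqref{eq:rs} is a pointwise condition. Your extra verifications (co-isotropy of product stabilizers, the bracket computation via \eqref{eq:formula}) merely spell out what the paper invokes implicitly from Sections \ref{subsec:action} and \ref{sec:split}.
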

\begin{proof}
  Since $R$ is a Lagrangian subalgebra, the direct product $(M'\times
  M)\times R\subset \A_M\times\ol{\A_{M'}}$ is a Dirac structure.
  Condition \eqref{eq:equivariance} guarantees that $(\a\times\a')(R)$
  is tangent to the graph of $\Phi$.  The statements about Lagrangian
  splittings are obvious since the relation \eqref{eq:rs} is defined
  pointwise.
\end{proof}

We stress that the relation $R\colon \g\da \g'$ need not
come from a Lie algebra homomorphism.  

\begin{remark}
  The Proposition \ref{prop:generalpicture} may be generalized to the
  set-up from Section \ref{subsec:pull}, replacing $R\colon \g\da
  \g'$ with a Courant morphism $\A\da \A'$.
\end{remark}

\begin{example}[The Courant algebroid $\A_G$]\label{ex:courmult}
Let $\g$ be a quadratic Lie algebra, with invariant inner product
$B_\g$.  The space $\dd=\g\oplus \ol{\g}$ may be viewed as a pair
groupoid over $\g$, with source and target map $s(x,y)=y,\ t(x,y)=x$
and with groupoid multiplication 
\[ z=z'\circ z''\ \  \mbox{ if } \ \ s(z)=s(z''),\
t(z)=t(z'),\ s(z')=t(z'').\]
Let  
\[ R\colon \dd\oplus \dd\da \dd,\ (z',z'')\sim_R z'\circ z''.\]
In more detail, $R\subset \dd\oplus \ol{\dd\oplus\dd}$ is the subspace
of elements $(z,z',z'')$ such that $z=z'\circ z''$.  Then $R$ is a
Lagrangian subalgebra, with $\ran(R)=\dd$ and $\ker(R)=\{(z',z'')|\ 
z'\circ z''=0\}$. Let $G$ be a Lie group integrating $\g$, with the
$\dd$-action $\a\colon \dd \to \mf{X}(G)$ and Courant algebroid $\A_G$
from Example \ref{ex:g}.  Let $\Mult\colon G\times G\to G$ be group
multiplication. One easily checks (cf. \cite[Proposition 3.8]{al:pur})
that
\[ z=z'\circ z''\ \ \Rightarrow \ \ (\a(z'),\a(z''))\sim_{\Mult}
\a(z).\]
Hence the conditions from Proposition \ref{prop:generalpicture} hold,
and we obtain a Courant morphism
\begin{equation}
 R_{\Mult}\colon \A_G\times\A_G\da \A_G.
\end{equation}
There is also a Courant morphism $U_\Inv\colon \A_G\da \ol{\A_G}$ lifting the
inversion map $\on{Inv}\colon G\to G,\ g\mapsto g^{-1}$.  It is given
by the direct product of $\on{Graph}(\on{Inv})\times U$, where 
\[ U\colon\dd\to \ol{\dd},\ \  z\sim_U z^{-1}\] 
with $z^{-1}=(v,u)$ the groupoid inverse of $z=(u,v)$.  The
associativity property of group multiplication, and the property
$\on{Inv}\circ \on{Mult}=\Mult\circ (\Inv\times\Inv)$ lift to the
Courant morphisms. Thus $G$ with the Courant algebroid $\A_G$ could justifiably be called a
\emph{Courant Lie group}.
%
\end{example}

\begin{example}[The action map $G\times M\to M$] \label{ex:action}
  Let $\g$ be a quadratic Lie algebra, $G$ a corresponding Lie group
  and $\A_G$ the Courant algebroid from Example \ref{ex:g}. Suppose
  $G$ acts on a manifold $M$, with co-isotropic stabilizer algebras.
  We claim that the action map $\Phi\colon G\times M\to M$ lifts to a
  morphism of Courant algebroids
\[ S_\Phi\colon \A_G\times\A_M\da \A_M.\]
The relation 
\[ S\colon \dd\oplus\g\da \g,\ \ ((z,z'),z')\sim_S z.\]
defines a Lagrangian subalgebra of $\g\oplus
\ol{\dd\oplus\g}$, and it has the required property
\eqref{eq:equivariance}. The action property $\Phi\circ
(\Mult_G\times\on{id}_M)=\Phi\ \circ\ (\on{id}_G\times\Phi)$ lifts to
a similar property of the Courant morphisms, so that one might call
this a \emph{Courant Lie group action}.
\end{example}

\begin{example}\label{ex:dg}
Let $\g$ be a quadratic Lie algebra, with corresponding Lie group
$G$. Suppose $H\subset G$ is a closed subgroup whose Lie algebra
$\h\subset\g$ is Lagrangian, and let $\A_{G/H}=G/H\times\g$ be the resulting
Courant algebroid. Then the quotient map $\Psi\colon G\to G/H$ lifts to a
Courant morphism
\[ T_\Psi\colon \A_G \da \A_{G/H},\]
by taking 
\[ T\colon \dd\da \g,\ \ (z,u)\sim_T z, \ z\in \g,\ u\in\h.\]
If the subalgebra $\h\subset\g$ is only co-isotropic, but contains a
Lagrangian subalgebra $\k\subset\h$, one obtains a Courant morphism
$T_\Psi$ by replacing $u\in\h$ with $u\in\k$ in the definition of
$T$.
\end{example}

\section{Lagrangian splittings}\label{sec:split}
Throughout this Section, we assume that $\dd$ is a split quadratic Lie
algebra. 

\subsection{Lagrangian splittings of $\A_M$}
Let $\A_M=M\times\dd$ be the Courant algebroid defined by a
$\dd$-action with co-isotropic stabilizers. For any Lagrangian
subspace $\mf{l}\subset\dd$, the subbundle $E=M\times\mf{l}$ is
Lagrangian. Let $\Upsilon^{\mf{l}}\in\wedge^3\mf{l}^*$ be defined by
$\Upsilon^{\mf{l}}(u_1,u_2,u_3)=\l u_1,[u_2,u_3]\r$, $u_i\in \mf{l}$.
Then $\Upsilon^{\mf{l}}$ vanishes if and only if $\mf{l}$ is a Lie
subalgebra.

\begin{proposition}
  The Courant tensor of the Lagrangian subbundle $E=M\times\mf{l}$ is 
  $\Upsilon^{\mf{l}}$ viewed as a constant section. In particular, it
  vanishes if and only if $\mf{l}$ is a Lagrangian subalgebra.
\end{proposition}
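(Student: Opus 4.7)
The plan is to exploit tensoriality of the Courant tensor and reduce the computation to constant sections, where the formula (\ref{eq:formula}) collapses to the pointwise Lie bracket on $\dd$.

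First, recall that $\Upsilon^E \in \Gamma(\wedge^3 E^*)$ is $C^\infty(M)$-linear in all three arguments, so its value at a point $m \in M$ depends only on the values of the sections at $m$. Since $E = M \times \mathfrak{l}$ is a trivial bundle with fiber $\mathfrak{l}$, and at each $m$ every element of $\mathfrak{l}$ extends to a constant section of $E$, it suffices to compute $\Upsilon^E(u_1, u_2, u_3)$ when $u_1, u_2, u_3 \in \mathfrak{l}$ are regarded as constant sections.

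Next, I would apply formula (\ref{eq:formula}): for constant $u_2, u_3 \in \mathfrak{l} \subset C^\infty(M, \dd)$, the term $\a^*\langle \d u_2, u_3\rangle$ vanishes since $\d u_2 = 0$, and from (\ref{eq:liealgebroid}) the Lie algebroid bracket reduces to $[u_2, u_3]_{\X} = [u_2, u_3]_{\dd}$ (the Lie derivatives $\L_{\a(u_2)} u_3$ and $\L_{\a(u_3)} u_2$ vanish on constant sections). Hence $\Cour{u_2, u_3} = [u_2, u_3]_{\dd}$ as a constant section of $\A_M$. Pairing with $u_1$ gives
\[
\Upsilon^E(u_1, u_2, u_3) = \langle u_1, \Cour{u_2, u_3}\rangle = \langle u_1, [u_2, u_3]_{\dd}\rangle = \Upsilon^{\mathfrak{l}}(u_1, u_2, u_3),
\]
which is the constant function on $M$ with the stated value. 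By tensoriality this identifies $\Upsilon^E$ with $\Upsilon^{\mathfrak{l}}$ viewed as a constant section of $\wedge^3 E^*$.

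The final statement, that $\Upsilon^E$ vanishes iff $\mathfrak{l}$ is a Lagrangian subalgebra, then follows because vanishing of $\Upsilon^{\mathfrak{l}}$ means $[u_2, u_3]_{\dd}$ is orthogonal to all of $\mathfrak{l}$, which by $\mathfrak{l} = \mathfrak{l}^\perp$ is equivalent to $[u_2, u_3]_{\dd} \in \mathfrak{l}$. I do not anticipate a real obstacle here; the only subtlety is checking that constant sections suffice, which is immediate from $C^\infty(M)$-linearity of $\Upsilon^E$ and the triviality of the bundle $E$.
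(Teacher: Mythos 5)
Your proposal is correct and follows essentially the same route as the paper: the paper's proof is simply the observation that the Courant bracket on constant sections coincides with the pointwise Lie bracket on $\dd$ (which is built into the construction of $\A_M$ and recorded in the formula $\Cour{x,y}=[x,y]_\X+\a^*\l \d x,y\r$), combined with tensoriality of $\Upsilon^E$. Your expanded verification of these two points and of the final equivalence via $\mf{l}=\mf{l}^\perp$ is accurate and adds nothing essentially new.
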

\begin{proof}
  This is immediate from the definition of $\Upsilon^E$, since the
  Courant bracket on constant sections coincides with the pointwise
  Lie bracket.
\end{proof}

Hence, if $(\dd,\g)$ is a Manin pair (i.e. $\g$ is a Lagrangian Lie
subalgebra), then $(\A_M,\ M\times\g)$ is a Manin pair over $M$. As a special case of Theorem
\ref{th:splittings} we obtain: 

\begin{theorem}\label{th:easy}
Suppose $\dd$ acts on $M$ with co-isotropic stabilizers.
  Let $\dd=\mf{e}\oplus \mf{f}$ be a decomposition into two
  Lagrangian subspaces, and let $E=M\times\mf{e},\ F=M\times\mf{f}$. 
Define a bi-vector field on $M$ by
\[ \pi=\hh \sum_i \a(e_i)\wedge \a(f^i)\]
where $e_i,f^i$ are dual bases of $\mf{e},\mf{f}$. Then 
\[\hh[\pi,\pi]=\mf{a}(\Upsilon^{\mf{e}})+\mf{a}(\Upsilon^{\mf{f}}).\]
The rank of $\pi$ at $m\in M$ is given by 
\begin{equation}\label{eq:rk1} \rk(\pi_m)=\dim(\a_m(\g_2))-\dim(\mf{l}_m\cap \g_1)\end{equation}
with the Lagrangian subalgebra $\mf{l}_m=\ran(\a_m^*)+\ker(\a_m)\cap
\g_2\subset \dd$.  If $\mf{e}=\g_1$ and $\mf{f}=\g_2$ are Lagrangian
subalgebras, then $(\A_M,\ E,\ F)$ is a Manin triple over $M$ and
hence $\pi$ is a Poisson structure. The symplectic leaves of $\pi$ are
contained in the intersections of the $\g_1$-orbits and $\g_2$-orbits,
with equality if and only if
$\ker(\a)=\ran(\a^*)+(\ker(\a_1)\oplus\ker(\a_2))$. Here
$\a_i\colon\g_i\to TM$ are the restrictions of the action.
\end{theorem}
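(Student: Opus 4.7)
The plan is to recognize this theorem as a direct specialization of the general results from Section \ref{sec:man} to the Courant algebroid $\A_M = M\times\dd$, so the bulk of the work has already been done. The first step is to identify the bivector: in the splitting $\A_M = E \oplus F$ with $E = M\times\mf{e}$ and $F = M\times\mf{f}$, if $e_i$ is a basis of $\mf{e}$ and $f^i$ the dual basis of $\mf{f}$ (dual via the pairing of $\dd$), then the constant sections $e_i, f^i$ form dual local frames of $E, F$. Plugging these into the formula \eqref{eq:piformula} yields exactly $\pi = \hh \sum_i \a(e_i)\wedge \a(f^i)$.

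Next, I would apply Theorem \ref{th:splittings} to this splitting. That theorem gives $\hh [\pi,\pi] = \a(\Upsilon^E) + \a(\Upsilon^F)$. By the Proposition immediately preceding the present theorem, $\Upsilon^E$ is the constant section corresponding to $\Upsilon^{\mf{e}} \in \wedge^3\mf{e}^*$, and similarly for $F$. Identifying constant sections with their values, the Schouten bracket formula $\hh[\pi,\pi] = \a(\Upsilon^{\mf{e}}) + \a(\Upsilon^{\mf{f}})$ is immediate. When $\mf{e}, \mf{f}$ are Lie subalgebras, both Courant tensors vanish by that same Proposition, so $E, F$ are Dirac structures, $(\A_M, E, F)$ is a Manin triple, and $[\pi,\pi]=0$, i.e.\ $\pi$ is Poisson.

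For the rank formula, I would directly invoke Lemma \ref{rk}. Since $F = M\times\g_2$, we have $\a_m(F_m) = \a_m(\g_2)$; and $L_m = \ran(\a_m^*) + (\ker(\a_m)\cap F_m)$ equals precisely $\mf{l}_m = \ran(\a_m^*) + \ker(\a_m)\cap\g_2$, so $L_m \cap E_m = \mf{l}_m \cap \g_1$, and the formula \eqref{eq:rk1} drops out. Finally, for the description of symplectic leaves, I would note that $\a(E) = \a_1(\g_1)$ and $\a(F) = \a_2(\g_2)$ are the orbit distributions, so the inclusion $\ran(\pi^\sharp) \subset \a(E)\cap\a(F)$ from \eqref{eq:range} says the symplectic leaves lie inside connected components of intersections of $\g_1$- and $\g_2$-orbits. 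Proposition \ref{prop:leaves} converts this into an equality precisely under condition \eqref{eq:condition}; since $\g_1\cap\g_2 = 0$, we have $(\ker(\a)\cap E) + (\ker(\a)\cap F) = \ker(\a_1)\oplus\ker(\a_2)$, recovering the stated criterion.

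The main obstacle, if any, is essentially bookkeeping: carefully matching the identifications $E^* \cong F$ (used in viewing $\Upsilon^E$ as a section of $\wedge^3 F$) with the constant-section identification, and verifying that $\mf{l}_m$ really is a (Lagrangian) subalgebra under the hypothesis that $\g_2$ is one and $\ker(\a_m)$ is co-isotropic; this uses $\ran(\a_m^*) = \ker(\a_m)^\perp \subset \ker(\a_m)$ together with the subalgebra property of $\ker(\a_m)\cap\g_2$.
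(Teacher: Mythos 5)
Your proposal is correct and follows exactly the route the paper intends: the theorem is stated there as a direct specialization of Theorem \ref{th:splittings}, combined with the Proposition on constant Lagrangian subbundles, the rank Lemma \eqref{eq:rk}, and Proposition \ref{prop:leaves}, which is precisely how you assemble it. The only quibble is cosmetic (your citation ``Lemma \ref{rk}'' should point to the rank Lemma containing \eqref{eq:rk}); the mathematical content matches the paper's argument.
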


As remarked in the introduction, the Poisson structure $\pi$ on $M$ 
given by Theorem \ref{th:easy}, in the case that $\g_i$ are
Lagrangian sub-algebras, is due to Lu-Yakimov \cite{lu:reg}.

\begin{remark}
  Note that for $\g_2$ a Lagrangian subalgebra, $m\mapsto
  \mf{l}_m=\ran(\a_m^*)+\ker(\a_m)\cap \g_2$ gives a map from $M$ into
  the variety of Lagrangian subalgebras of $\dd$ -- a generalization
  of the \emph{Drinfeld map}.  In general, this map need not be
  smooth.
\end{remark}

\begin{remark}
  If the stabilizer algebras for the $\dd$-action are Lagrangian, then
  necessarily $\ker(\a)=\ran(\a^*)$. In this case, if
  $(\dd,\g_1,\g_2)$ is a Manin triple, the symplectic
  leaves of the Manin triple $(\A_M,\ E,\ F)$
  are the intersections of $\g_1$-orbits with $\g_2$-orbits.
\end{remark}

\begin{example}
If $\dd=\g\oplus\ol{\g}$ as in Example \ref{ex:g}, the diagonal 
$\g_\Delta\subset\dd=\g\oplus\ol{\g}$ is a Lagrangian subalgebra, and
the anti-diagonal $\g_{-\Delta}=\{(x,-x)|\ x\in\g\}$ is a Lagrangian
complement. The anti-diagonal is not a subalgebra unless $\g$ is
Abelian. In fact, letting $\Xi\in\wedge^3\g$ be the structure
constants tensor for $\g$, with normalization 
$\Xi(x_1,x_2,x_3)=\f{1}{4}B(x_1,[x_2,x_3])$, 
the Courant tensor of the anti-diagonal is 
\[\Upsilon^{\g_{-\Delta}}(\ti{x}_1,\ti{x}_2,\ti{x}_3)=\Xi(x_1,x_2,x_3),\
\ \ 
x_i\in\g,\ \ \ti{x}_i=\hh(x_i,-x_i) .\]
Given a $\dd$-action on a manifold $M$, with co-isotropic stabilizers,
the resulting Lagrangian splitting of the Courant algebroid $\A_M=M\times\dd$
defines a bi-vector field $\pi\in \mf{X}^2(M)$ with 
\[ \hh[\pi,\pi]=\a(\Xi).\]
Up to an irrelevant constant this is the definition of a
\emph{quasi-Poisson manifold} as in \cite{al:qu}. In particular, the
group $G$, and the variety of Lagrangian subalgebras in
$\dd=\g\oplus\ol{\g}$ carry natural quasi-Poisson structures.
\end{example}

\begin{example}
  Suppose $\g$ is complex semi-simple, with triangular decomposition
  $\g=\n_-\oplus \h\oplus \n_+$, and let $\dd=\g\oplus\ol{\g}$.  Then
  another choice of a complement to the diagonal $\g_\Delta$ is
\[ \mf{l}=\h_{-\Delta}+(\n_-\oplus \n_+)\subset \g\oplus \ol{\g}.\] 
Since $\mf{l}$ is a Lie subalgebra, the Manin triple
$(\dd,\g_\Delta,\mf{l})$ defines a Poisson structure on $M$.  The
Poisson structure on $G$ obtained in this way is due to
Semenov-Tian-Shansky \cite{se:dr}. Its extension to a Poisson
structure on the variety of Lagrangian subalgebras (and hence in
particular the de Concini-Procesi compactification) is due to Evens-Lu
\cite{ev:on}.
\end{example}

\begin{example}
  Let $(\dd,\g_1,\g_2)$ be a Manin triple, and $D$ a Lie group
  integrating $\dd$. Given a closed subgroup $Q\subset D$ whose Lie
  algebra $\mf{q}$ is co-isotropic in $D$ (cf. Example
  \ref{ex:homogeneous}). The resulting Poisson structure
  $\pi\in\mf{X}^2(D/Q)$ was studied in detail in the work of
  Lu-Yakimov \cite{lu:reg}. At the coset $[d]=dQ\in D/Q$, we have
  $\ker(\a_{[d]})=\Ad_d(\mf{q})$, hence
  $\ran(\a_{[d]}^*)=\Ad_d(\mf{q}^\perp)$. Furthermore, as shown in
  \cite[Proposition 2.5]{lu:reg} the space
  $\mf{l}_d=\Ad_d(\mf{q}^\perp)+ \Ad_d(\mf{q})\cap\g_2$ is the
  Lagrangian subalgebra given by the Drinfeld homomorphism for the
  Poisson homogeneous space $D/Q$.  Hence \eqref{eq:rk1} reduces to
  the following formula \cite[(2.11)]{lu:reg}
\[
\on{rank}(\pi_{[d]})=\dim(\a_{[d]}(\g_2))-\dim(\mf{l}_d\cap\g_1).\]
%
\end{example}

\subsection{Lagrangian splittings of $\A_D$}\label{sec:splitdd}
Suppose $\dd$ is a split quadratic Lie algebra, and
$D$ a Lie group integrating $\dd$. Then $\A_D=D\times(\dd\oplus \ol{\dd})$
is a Courant algebroid, as a special case of Example \ref{ex:g} (with
$\dd$ playing the role of $\g$ in that example). If $(\dd,\g_1,\g_2)$
is a Manin triple, we obtain two Manin triples for $\dd\oplus\ol{\dd}$:
\[ (\dd\oplus\ol{\dd},\ \mf{e}_+,\ \mf{f}_+),\ \ \ \ 
(\dd\oplus\ol{\dd},\ \mf{e}_-,\ \mf{f}_-)\]
where
\[ \mf{e}_+=\g_1\oplus \g_2,\ \ \mf{f}_+=\g_2\oplus\g_1,\ \
\mf{e}_-=\g_1\oplus \g_1,\ \ \mf{f}_-=\g_2\oplus\g_2.\] 
Letting $E_\pm=D\times \mf{e}_\pm,\ \ F_\pm=D\times\mf{f}_\pm$ both
$(D,E_+,F_+)$ and $(D,E_-,F_-)$ are Manin triples over $D$, defining
Poisson structures $\pi_+,\ \pi_-\in\mf{X}^2(D)$.  Since the Courant
algebroid $D\times(\dd\oplus\ol{\dd})$ is exact, Theorem \ref{th:easy}
gives a simple description of the symplectic leaves: Let $G_1,G_2$ be
subgroups of $D$ integrating $\g_1,\g_2$, then the symplectic leaves
of $\pi_+$ are the components of the intersections of the $G_1-G_2$
double cosets with the $G_2-G_1$ double cosets, while the leaves of
$\pi_-$ are the components of the intersections of the $G_1-G_1$
double cosets with the $G_2-G_2$ double cosets.  Note that the
symplectic leaf of the group unit $e\in D$ for the Poisson structure
$\pi_+$ is an open neighbourhood of $e$, whereas relative to the
Poisson structure $\pi_-$ the leaf of $e$ is a point. Hence only
$\pi_-$ is a possible candidate for a Poisson Lie group structure on
$D$.
Under the morphism $R\colon \dd\oplus \dd\da \dd$, we find 
\[ \begin{split}
(\mf{e}_-\times \mf{e}_-,\mf{f}_-\times \mf{f}_-)&\sim_{R} (\mf{e}_-,\mf{f}_-),\\
(\mf{e}_+\times \mf{f}_+,\mf{f}_+\times \mf{e}_+)&\sim_{R} (\mf{e}_-,\mf{f}_-),\\
(\mf{e}_+\times \mf{f}_-,\mf{f}_+\times \mf{e}_-)&\sim_{R} (\mf{e}_+,\mf{f}_+),\\
(\mf{e}_-\times \mf{e}_+,\mf{f}_-\times \mf{f}_+)&\sim_{R} (\mf{e}_+,\mf{f}_+)\\
\end{split}\]
Note in particular that in each case, $\ker(R)$ is a direct sum of
its intersections with the two Lagrangian subspaces, as required in
Definition \ref{def:lagrel}.

Hence, we obtain similar relations $(E_-\times E_-,F_-\times
F_-)\sim_{R_\Mult} (E_-,F_-)$ etc., and therefore the following
relations of the Poisson bivector fields: 
\[\begin{split}
\pi_-\times\pi_-&\sim_{\Mult}\pi_-\\
\pi_+\times(-\pi_+)&\sim_{\Mult}\pi_-\\
\pi_+\times(-\pi_-)&\sim_{\Mult}\pi_+\\
\pi_-\times\pi_+&\sim_{\Mult}\pi_+
\end{split}\]
In particular $(D,\pi_-)$ is a Poisson Lie group, and its action on
$(D,\pi_+)$ is a Poisson action.
\begin{remark}
  The Poisson Lie group $(D,\pi_-)$ is the well-known \emph{Drinfeld
    double} (of the Poisson Lie group $G_1$, see below), while
  $(D,\pi_+)$ is known as the \emph{Heisenberg double}. One has the
  explicit formulas $\pi_\pm=\mf{r}^R\pm \mf{r}^L$, where the r-matrix
  $\mf{r}\in \wedge^2\dd$ is given in dual bases of $\g_1,\g_2$ by
  $\mf{r}=\hh\sum_i e_i\wedge f^i$, and the superscripts $L,R$
  indicate the extensions to left, right-invariant vector fields. The
  description of the symplectic leaves of $(D,\pi_+)$, as
  intersections of double cosets, was obtained by Alekseev-Malkin
  \cite{al:sym} using a direct calculation.
\end{remark}

\begin{remark}
  Given a Manin triple $(\dd,\g_1,\g_2)$, one may also consider the
  Manin triples $(\dd\oplus\ol{\dd},\dd_\Delta,\mf{e}_+)$ and
  $(\dd\oplus\ol{\dd},\dd_\Delta,\mf{f}_+)$. These define yet
  other Poisson structures on $D$, whose symplectic leaves are the
  intersections of conjugacy classes in $D$ with $G_2-G_1$ double
  cosets, respectively the $G_1-G_2$ double cosets.
\end{remark}

Consider now a $D$-manifold $M$ as in Example \ref{ex:action}. 
Let $\pi\in\mf{X}^2(M)$ be the Poisson bivector defined by the Manin
triple $(\dd,\g_1,\g_2)$. Under the 
morphism $S\colon (\dd\oplus\ol{\dd})\oplus\dd\da \dd$ from that
example, 
\[\begin{split}
 (\mf{e}_+\times \g_2, \mf{f}_+\times\g_1)&\sim_S (\g_1,\g_2),\\ 
(\mf{e}_-\times \g_1, \mf{f}_-\times\g_2)&\sim_S (\g_1,\g_2).
\end{split}\]    
It follows that relative to the action map $\Phi\colon D\times M\to
M$,
\[ \begin{split}\pi_+\times(-\pi)&\sim_\Phi \pi,\\ \pi_-\times \pi&\sim\pi.\end{split}\]
In particular, the $(D,\pi_-)$-action is a Poisson Lie group action.
Finally, let $G\subset D$ be a closed subgroup such that
$\g\subset\dd$ is a Lagrangian subalgebra, and consider the quotient
map $\Psi\colon D\to D/G$ and the morphism $T\colon \dd\oplus \ol{\dd}\da
\dd$ as in Example \ref{ex:dg}. Let $D/G$ carry
the Poisson structure $\pi$ defined by a Manin triple $(\dd,\g_1,\g_2)$. 
Assuming that 
\begin{equation}\label{eq:need}
 \g=(\g\cap \g_1)\oplus (\g\cap\g_2)\end{equation}
(which is automatic if $\g=\g_1$ or $\g=\g_2$) one finds 
\[ \begin{split}(\mf{e}_+,\mf{f}_+)&\sim_T (\g_1,\g_2)\\
(\mf{e}_-,\mf{f}_-)&\sim_T (\g_1,\g_2)
\end{split}\]
and consequently $\pi_+\sim_\Psi\pi,\ \pi_-\sim_\Psi \pi$. Here
\eqref{eq:need} ensures that $\ker(T)=0\oplus\g$ is the direct sum of
its intersections with $\mf{e}_+,\,\mf{f}_+$, respectively with
$\mf{e}_-,\mf{f}_-$.

\section{Poisson Lie groups}\label{sec:poi}
In the last Section, we explained how the choice of a Manin triple
$(\dd,\g_1,\g_2)$ defines Poisson structures $\pi_\pm$ on the Lie
group $D$ corresponding to $\dd$, where $\pi_-$ is multiplicative.
Let $\Phi\colon G_1\to D$ be a map exponentiating the inclusion
$\g_1\to\dd$ to the level of Lie groups. In this Section, we show that
the pull-back Courant algebroid $\Phi^!\A_D$
has the form $G_1\times\ol{\dd}$ for a suitable $\dd$-action on $G_1$
(the \emph{dressing action}).  We fix the following notation: For any
Lie group $G$ and any $\xi\in C^\infty(G,\g)$, we denote by
$\xi^L\in\mf{X}(G)$ the vector field corresponding to $\xi$ under left
trivialization $G\times\g\cong TG$, and by $\xi^R$ the vector field
corresponding to $\xi$ under right trivialization. If $\xi\in\g$
(viewed as a constant function $G\to \g$), this agrees with our
earlier notation for left-invariant and right-invariant vector fields.

\subsection{The Courant algebroid $G_1\times\dd$}
Let $p_1,p_2\colon\dd\to \g_i$ be the projection to the two summands of
the Manin triple  $(\dd,\g_1,\g_2)$. 

\begin{theorem}\label{th:restrict}
Each of the following two maps $\dd\to \mf{X}(G_1)$
\[ \zeta\mapsto p_1(\Ad_{\Phi(g)}\zeta)^R,\ \ \ 
\zeta\mapsto -p_1(\Ad_{\Phi(g^{-1})}\zeta)^L\]
defines a Lie algebra action of $\dd$, with co-isotropic
stabilizers. Let $\A^R=G_1\times \dd,\ \A^L=G_1\times\dd$ be the
Courant algebroids for these actions. 
Then 
\[ \A^L\cong \ol{\A^R}\cong \Phi^!\A_D.\]
\end{theorem}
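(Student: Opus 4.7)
The plan is to proceed in three stages. First, verify that each of the two formulas defines a Lie algebra action of $\dd$ on $G_1$ with Lagrangian (hence co-isotropic) stabilizers, so that by Example \ref{ex:important} the quadruples $\A^R$ and $\A^L$ are genuine Courant algebroids. Second, spell out $\Phi^!\A_D=C/C^\perp$ in a right trivialization and write down explicit vector bundle maps $\Psi^L\colon \A^L\to\Phi^!\A_D$ and $\Psi^R\colon\A^R\to\overline{\Phi^!\A_D}$. Third, verify these are isomorphisms of Courant algebroids; only the bracket compatibility requires genuine work.

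For the first stage, I would observe that since $\Ad_{\Phi(g)}$ preserves $\g_1$ for $g\in G_1$, the action $\zeta\mapsto p_1(\Ad_{\Phi(g)}\zeta)^R$ restricts on $\g_1\subset\dd$ to $\xi\mapsto(\Ad_g\xi)^R=\xi^L$, i.e.\ the standard action by right translations, while on $\g_2$ it is the classical right dressing action of Lu--Weinstein. The Jacobi identity $[\varrho^R(\zeta_1),\varrho^R(\zeta_2)]=\varrho^R([\zeta_1,\zeta_2]_\dd)$ is then a direct verification using $\Ad$-invariance of $\langle\cdot,\cdot\rangle$ and the Manin-triple relations between $\g_1,\g_2$. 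The stabilizer at $g$ is
\[
\ker(\varrho^R_g)=\{\zeta\in\dd\mid\Ad_{\Phi(g)}\zeta\in\g_2\}=\Ad_{\Phi(g^{-1})}\g_2,
\]
the $\Ad$-conjugate of the Lagrangian subalgebra $\g_2$, hence itself Lagrangian. The same arguments handle $\varrho^L$.

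For the second stage, using right trivialization to identify $T_{\Phi(g)}D\cong\dd$ (and, via the inner product, $T^*_{\Phi(g)}D\cong\dd$), the anchor of $\A_D$ becomes $(u,w)\mapsto\Ad_{\Phi(g)}w-u$, and its adjoint is $\a^*\tilde\lambda=(-\tilde\lambda,-\Ad_{\Phi(g^{-1})}\tilde\lambda)$. The pull-back description in Section \ref{subsec:pull} then gives
\[
C|_g=\bigl\{\bigl((u,w);\xi^R_g,\mu\bigr)\mid\Ad_{\Phi(g)}w-u=\xi\in\g_1,\ \mu\in T^*_gG_1\bigr\},
\]
with $C^\perp|_g$ spanned (as $\tilde\lambda$ runs over $\dd$) by $\bigl((-\tilde\lambda,-\Ad_{\Phi(g^{-1})}\tilde\lambda);\,0,\,-(d\Phi)^*\tilde\lambda\bigr)$. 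I then set
\[
\Psi^R_g(\zeta)=\bigl[\bigl(p_2(\Ad_{\Phi(g)}\zeta),\zeta\bigr);\,p_1(\Ad_{\Phi(g)}\zeta)^R_g,\,0\bigr],\ \Psi^L_g(\zeta)=\bigl[\bigl(\zeta,p_2(\Ad_{\Phi(g^{-1})}\zeta)\bigr);\,-p_1(\Ad_{\Phi(g^{-1})}\zeta)^L_g,\,0\bigr],
\]
both well-defined because $\Ad_{\Phi(g)}\zeta-p_2(\Ad_{\Phi(g)}\zeta)=p_1(\Ad_{\Phi(g)}\zeta)\in\g_1$ and its $g^{-1}$ analogue hold. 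A dimension count combined with the fact that $\Psi^R$ and $\Psi^L$ each recover $\zeta$ from one of their $\dd$-components shows these are fibrewise bijections. Using $\langle p_2(\cdot),p_2(\cdot)\rangle_\dd=0$ (because $\g_2$ is Lagrangian), short computations yield $\langle\Psi^L(\zeta_1),\Psi^L(\zeta_2)\rangle=\langle\zeta_1,\zeta_2\rangle_\dd$ and $\langle\Psi^R(\zeta_1),\Psi^R(\zeta_2)\rangle=-\langle\zeta_1,\zeta_2\rangle_\dd$, so $\Psi^L$ is an isometry and $\Psi^R$ an anti-isometry, matching the two isomorphisms in the statement. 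Anchor compatibility is immediate from the formulas together with the definitions of $\varrho^R,\varrho^L$.

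The hardest step is bracket compatibility. Because inner products and anchors have already been matched, and because the Courant bracket on $M\times\dd$ is determined by its values on constant sections through the derivation identity \eqref{eq:formula}, it suffices to check $\Psi([\zeta_1,\zeta_2]_\dd)=\Cour{\Psi(\zeta_1),\Psi(\zeta_2)}$ for constant $\zeta_i\in\dd$. My approach is to extend $\Psi^R(\zeta)$ to a section of $\A_D\times\T G_1$ near $\on{Graph}(\Phi)$ by substituting a general $d\in D$ for $\Phi(g)$ in the formula, compute the Courant bracket in $\A_D\times\T G_1$ using \eqref{eq:formula} applied to $\A_D$ (Example \ref{ex:g}), and then reduce modulo $C^\perp$. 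The delicate point is that differentiating $d\mapsto p_2(\Ad_d\zeta)$ along the anchor of $\A_D$ produces correction terms; the Maurer--Cartan equation for $\Ad$ together with $\Ad$-invariance of the bilinear form must be used to show that these corrections either reassemble into $\Psi^R([\zeta_1,\zeta_2]_\dd)$ or lie in $C^\perp$. The remaining identity $\A^L\cong\overline{\A^R}$ is then a formal consequence of the two isomorphisms with $\Phi^!\A_D$.
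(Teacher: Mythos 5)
Your construction is correct, and in fact your maps $\Psi^R,\Psi^L$ are exactly the sections $\phi^R(\zeta)=(p_2(\Ad_d\zeta),\zeta)$ and $\phi^L(\zeta)=(\zeta,p_2(\Ad_{d^{-1}}\zeta))$ on which the paper's proof rests; what differs is the organization, and the difference matters for how much you have to compute. The paper does not verify the dressing actions or the bracket identity by hand: it reduces $\A_D$ over $D$ itself along the coisotropic subbundle $C^R=\{x:\ \a(x)\in\g_1^R\}$ (coisotropic because it contains the Lagrangian subbundle $\ker\a$), identifies $(C^R)^\perp$ as the span of $\a^*\l\theta^R,\xi\r=-(\xi,\Ad_{d^{-1}}\xi)$ with $\xi\in\g_1$, and trivializes a complement by $\phi^R$. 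Then (i) the bracket $\Cour{\phi^R(\zeta),\phi^R(\ti\zeta)}$ is pinned down \emph{without} differentiating $d\mapsto p_2(\Ad_d\zeta)$: by \eqref{eq:formula} its second component is the constant $[\zeta,\ti\zeta]$, its first component is manifestly $\g_2$-valued (the $\a^*$-term drops out because $\g_2$ is isotropic), and since the bracket must again be a section of $C^R$ it is forced to equal $\phi^R([\zeta,\ti\zeta])$; (ii) the Lie-algebra-action property of $\zeta\mapsto p_1(\Ad_d\zeta)^R$ then falls out of p2), and coisotropy of stabilizers is automatic for the anchor of any Courant algebroid, since $\ker(\a)^\perp=\ran(\a^*)\subset\ker(\a)$; (iii) the passage to $G_1$ and $\Phi^!\A_D$ is immediate because $\d\Phi(TG_1)$ is precisely the distribution $\g_1^R$. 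Your route instead checks the dressing-action property directly on $G_1$ and then builds the isomorphism with $\Phi^!\A_D=C/C^\perp$ over $G_1$; your descriptions of $C$, $C^\perp$, the injectivity and rank count, the pairing computations showing $\Psi^L$ is an isometry and $\Psi^R$ an anti-isometry, and the reduction of the bracket check to constant sections are all correct and consistent with the paper.

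The one caveat is that your two computational cores are asserted rather than carried out: the homomorphism property of $\zeta\mapsto p_1(\Ad_{\Phi(g)}\zeta)^R$ in Stage 1, and the bracket compatibility in Stage 3. Both do go through, but be precise about what they use: the obstruction to $[\varrho(\zeta_1),\varrho(\zeta_2)]=\varrho([\zeta_1,\zeta_2])$ is the term $p_1([p_2(a_1),p_2(a_2)])$ with $a_i=\Ad_d\zeta_i$, which vanishes because $\g_2$ is a \emph{subalgebra}, not because of $\Ad$-invariance of the pairing; likewise in Stage 3 the first $\A_D$-component of the bracket equals $[p_2(a_1),p_2(a_2)]+p_2([p_1(a_1),a_2])-p_2([p_1(a_2),a_1])$, which matches $p_2(\Ad_d[\zeta_1,\zeta_2])$ using that both $\g_1$ and $\g_2$ are subalgebras. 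If you write your version out, it is worth importing the paper's trick from (i): since your bracket is automatically a section of $C$ with constant $\ol\dd$-component $[\zeta_1,\zeta_2]$ and $\g_2$-valued first component, the ``correction terms'' you flag as the delicate point never need to be computed.
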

\begin{proof}
  Let $C^R$ be the subbundle of $\A_D$
  consisting of elements $x$ with $\a(x)$ tangent to the
  foliation $\g_1^R\subset TD$ spanned by the right-invariant vector
  fields $\xi^R,\ \xi\in\g_1$.  Then $C^R$ contains the Lagrangian
  subbundle $\ker\a$, and hence is co-isotropic. We will show that
  $C^R/(C^R)^\perp$ is a Courant algebroid $D\times\ol{\dd}$ defined
  by the following $\dd$-action $\zeta\mapsto p_1(\Ad_d\zeta)^R\in
  \mf{X}(D)$. The isomorphism $\ol{\A^R}\cong \Phi^!\A_D$ and the description of the resulting action on
  $G_1$ are then immediate. Since  $\a(x)=\a(u,v)=v^L-u^R=(\Ad_d
  v-u)^R$ for $x=(u,v)$, the fiber of $C^R$ at $d\in D$ is given by 
\[ C^R|_d=\{(u,v)\in\dd\oplus\ol{\dd}|\  p_2(\Ad_d v-u)=0\}.\]
We read off that $C^R$ has rank $\dim\dd+\dim\g_1$, and hence
$(C^R)^\perp$ has rank $\dim\g_1$.  If $x\in C^R$, i.e.
$\iota_{\a(x)}\theta^R\in\g_1$ where $\theta^R\in \Om^1(D,\dd)$ is the
right-invariant Maurer Cartan form, it follows that for all
$\xi\in\g_1$
\[ \l x,\a^*\l\theta^R,\xi\r\r=\l \iota_{\a(x)}\theta^R,\xi\r=0.\]
Hence $(C^R)^\perp$ contains all $\a^*\l\theta^R,\xi\r=-(\xi,\Ad_{d^{-1}}\xi)$ with $\xi\in\g_1$,
and for dimensional reasons such elements span all of
$(C^R)^\perp|_d$. Hence, $(C^R)^\perp|_d$ consists of all those
elements $(u,v)\in C^R|_d$ for which $u\in\g_1$ and $\Ad_dv-u=0$. 
The set of elements $(u,v)\in C^R|_d$ with $u\in\g_2$ is hence a
complement to $(C^R)^\perp|_d$ in $C^R|_d$. Note that for any element
of this form, $u=p_2(u)=p_2(\Ad_d v)$.  We conclude that the trivial bundle $D\times \ol{\dd}$, embedded in
$\A_D=D\times(\dd\oplus\ol{\dd})$ by the map
\begin{equation}\label{eq:dress1}
 \zeta\mapsto
\phi^R(\zeta)=(p_2(\Ad_d\zeta),\zeta),
\end{equation} 
is a complement to $(C^R)^\perp$ in $C^R$. 
To compute the Courant bracket of two such sections let
$\zeta,\ti{\zeta}\in\dd$ and put $u=p_2(\Ad_d\zeta),\ 
\ti{u}=p_2(\Ad_d\ti{\zeta})\in C^\infty(D,\dd)$. We have
\[ \begin{split}
  \Cour{\phi^R(\zeta),\phi^R(\ti{\zeta})}&=[\phi^R(\zeta),\phi^R(\ti{\zeta})]_\mf{X}
  +\a^*\l\d\phi^R(\zeta),\phi^R(\ti{\zeta)}\r\\
  &=\Big([u,\ti{u}]+\L_{\a(\phi^R(\zeta))}\ti{u}-\L_{\a(\phi^R(\ti{\zeta}))}u,\ 
  [\zeta,\ti{\zeta}]\Big) +\a^* \l \d u,\ti{u}\r.
\end{split}
\]
But $\l \d u,\ti{u}\r=0$ since $u,\ti{u}$ take values in the isotropic
subalgebra $\g_2$. Hence the Courant bracket takes on the form
$(?,[\zeta,\ti{\zeta}])$ where
`$?$' indicates some function with values in $\g_2$. Since this
Courant bracket is a section of $C^R$, we know without calculation
that $?= p_2(\Ad_d[\zeta,\ti{\zeta}])$. We have thus shown
\begin{equation}\label{eq:hom}
 \Cour{\phi^R(\zeta),\phi^R(\ti{\zeta})}=\phi^R([\zeta,\ti{\zeta}]).\end{equation}
Finally, the anchor map for $C^R/(C^R)^\perp\cong D\times\ol{\dd}$ is
obtained from
\[ \a(\phi^R(\zeta))=\zeta^L-(p_2(\Ad_d\zeta))^R
=p_1(\Ad_d\zeta)^R.\]
This gives the desired description of $C^R/(C^R)^\perp$. Similarly, to
prove $\A^L=\Phi^!\A_D$ we define $C^L$ to be
the set of all $x$ with $\a(x)\in \g_1^L$.  Arguing as above, one
finds that $C^L/(C^L)^\perp$ is isomorphic to $D\times\dd$, where the
isomorphism is defined by sections
$\phi^L(\zeta)=(\zeta,p_2(\Ad_{d^{-1}}\zeta))$.  Again, we find that
$\phi^L$ is a Lie algebra homomorphism relative to the Courant
bracket.  The anchor map for $D\times\dd$ is given by $\zeta\mapsto
-p_1(\Ad_{d^{-1}}\zeta)^L$.
\end{proof}

It is worthwhile to summarize the main properties of the sections
$\phi^L,\phi^R$:
\begin{proposition}
The two sections
\[ \phi^L,\phi^R\colon\dd\to \Gamma(\A_D)=C^\infty(D,\dd\oplus\ol{\dd})\]
given by $\phi^L(\zeta)=(\zeta,p_2(\Ad_{d^{-1}}\zeta))$ and
$\phi^R(\zeta)=(p_2(\Ad_d\zeta),\zeta)$ are both homomorphisms
relative to the Courant bracket. They satisfy
$\l\phi^L(\zeta),\phi^L(\zeta)\r=-\l\phi^R(\zeta),\phi^R(\zeta)\r=\l\zeta,\zeta\r$,
and their images under the anchor map define $\dd$-actions on $D$, 
\begin{equation}\label{eq:dress}
 \zeta\mapsto -p_1(\Ad_{d^{-1}}\zeta)^L\ \  \mbox{ resp.}\ \
 \zeta\mapsto p_1(\Ad_d\zeta)^R.
\end{equation}
One has, for all $\zeta,\xi\in\dd$, 
\[
\begin{split}
\Cour{\phi^L(\zeta),\a^*\l\theta^L,\xi\r}&=-\a^*\l\theta^L,\,[p_1(\Ad_{d^{-1}}\zeta),\xi]\r,\
\\ 
\Cour{\phi^R(\zeta),\a^*\l\theta^R,\xi\r}&=-\a^*\l\theta^R,\, [p_1(Ad_d\zeta),\xi]\r.\end{split}
\]
\end{proposition}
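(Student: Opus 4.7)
The proposition collects four distinct claims about $\phi^L$ and $\phi^R$. Three of them --- the anchor formulas, the bracket–homomorphism property, and the $\dd$-action property --- already appear inside the proof of Theorem~\ref{th:restrict}; my plan is simply to repackage those computations and then settle the inner product identities and the two commutator formulas with $\a^*\langle\theta^{L/R},\xi\rangle$ by separate short verifications.

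First I would record that the homomorphism property for $\phi^R$ is equation~\eqref{eq:hom} in the proof of Theorem~\ref{th:restrict}. There the formula \eqref{eq:formula} is applied to two sections of the form $\phi^R(\zeta), \phi^R(\tilde\zeta)$, the pointwise bracket in $\dd\oplus\ol\dd$ is component-wise and in particular has second component $[\zeta,\tilde\zeta]$, and the correction $\a^*\langle du,\tilde u\rangle$ vanishes because $u,\tilde u = p_2(\Ad_d\cdot)$ take values in the Lagrangian subalgebra $\g_2$; since the Courant bracket is \emph{a priori} a section of $C^R$, the result must be $\phi^R([\zeta,\tilde\zeta])$. The analogous computation for $\phi^L$ with $p_2(\Ad_{d^{-1}}\cdot)$ is sketched in the final paragraph of that same proof. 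From this and property~p2, the $\dd$-action claim follows at once: $\a\circ\phi^{L/R}$ is a Lie algebra homomorphism into $\mf{X}(D)$, and the anchor formulas $\a(\phi^L(\zeta))=-p_1(\Ad_{d^{-1}}\zeta)^L$, $\a(\phi^R(\zeta))=p_1(\Ad_d\zeta)^R$ are precisely those obtained at the end of the proof of Theorem~\ref{th:restrict} via $(R_d)_*\zeta=(L_d)_*\Ad_{d^{-1}}\zeta$. The inner product identities reduce to one line: the $\dd\oplus\ol\dd$-pairing of $\phi^L(\zeta)=(\zeta,p_2\Ad_{d^{-1}}\zeta)$ with itself is $\langle\zeta,\zeta\rangle - \langle p_2\Ad_{d^{-1}}\zeta, p_2\Ad_{d^{-1}}\zeta\rangle$, whose second term vanishes by Lagrangianity of $\g_2$, and the $\phi^R$ case is identical up to the opposite overall sign coming from the opposite bilinear form in the second slot.

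For the two bracket identities I would invoke property~p4, namely $\Cour{\phi^L(\zeta),\a^*\mu}=\a^*(\L_{\a(\phi^L(\zeta))}\mu)$ with $\mu=\langle\theta^L,\xi\rangle$. Setting $u=-p_1(\Ad_{d^{-1}}\zeta)\in C^\infty(D,\g_1)$ so that $\a(\phi^L(\zeta))=u^L$, I expand $\L_{u^L}\langle\theta^L,\xi\rangle$ via Cartan's formula. For the interior-product-of-differential term I use the Maurer–Cartan equation $d\theta^L=-\tfrac12[\theta^L,\theta^L]$, which gives $\iota_{u^L}d\theta^L=-[u,\theta^L]$; pairing with $\xi$ and moving the bracket across via invariance $\langle[u,\theta^L],\xi\rangle=-\langle\theta^L,[u,\xi]\rangle$ yields $\langle\theta^L,[u,\xi]\rangle=-\langle\theta^L,[p_1\Ad_{d^{-1}}\zeta,\xi]\rangle$, which is precisely the claimed right-hand side. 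The companion identity for $\phi^R$ is verified in parallel fashion using the right Maurer–Cartan form, its structure equation $d\theta^R=+\tfrac12[\theta^R,\theta^R]$, and the fact that $\theta^R(u^R)=u$.

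The step I expect to be most delicate is the last one: Cartan's formula also contributes a $d\iota$-term $d\langle u,\xi\rangle$ which must be shown to be compatible with the stated form of the answer. Handling this requires combining Ad-invariance of the inner product on $\dd$ with the decomposition $\xi = p_1\xi+p_2\xi$ and the fact that $u$ takes values in $\g_1$; this is where the Manin-triple hypothesis enters the verification a second time. Provided these manipulations are carried out carefully, all four claims of the proposition follow without further input.
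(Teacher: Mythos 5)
Your plan for the first three claims (quoting \eqref{eq:hom} and the anchor computation from the proof of Theorem~\ref{th:restrict}, p2) for the action property, and the one-line inner product check using isotropy of $\g_2$) is correct and is exactly what the paper does. The problem is the step you yourself single out and then defer: the $\d\iota$-term in Cartan's formula. It cannot be made "compatible with the stated form of the answer" for arbitrary $\xi\in\dd$. Writing $u=-p_1(\Ad_{d^{-1}}\zeta)$, so that $\a(\phi^L(\zeta))=u^L$, the correct identity is
\[ \L_{u^L}\l\theta^L,\xi\r=\l\theta^L,[u,\xi]\r+\d\l u,\xi\r,\qquad \l u,\xi\r=-\l \Ad_{d^{-1}}\zeta,\,p_2\xi\r \]
(the last equality because $\l p_1 a,\xi\r=\l a,p_2\xi\r$, $\g_1,\g_2$ being isotropic). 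Hence, by p4),
\[ \Cour{\phi^L(\zeta),\a^*\l\theta^L,\xi\r}=-\a^*\l\theta^L,[p_1(\Ad_{d^{-1}}\zeta),\xi]\r-\a^*\,\d\l\Ad_{d^{-1}}\zeta,\,p_2\xi\r,\]
and similarly for $\phi^R$ with correction $+\a^*\,\d\l\Ad_{d}\zeta,p_2\xi\r$. Since the anchor of $\A_D$ is surjective, $\a^*$ is injective, so the exact term does not drop out unless the function $\l\Ad_{d^{-1}}\zeta,p_2\xi\r$ is locally constant. It is not in general: take $\dd=\g_1\ltimes\g_1^*$ with $\g_1$ spanned by $h,e$, $[h,e]=e$, and $\g_2=\g_1^*$; for $\zeta=e$, $\xi=e^*$ one finds $\l\Ad_{d^{-1}}\zeta,p_2\xi\r=e^{-t}$ along $d=\exp(th)$. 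So no combination of $\Ad$-invariance and $\xi=p_1\xi+p_2\xi$ will produce the displayed right-hand side; the promised "careful manipulations" cannot close the argument as stated.

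The identities do hold verbatim whenever $p_2\xi=0$, i.e.\ for $\xi\in\g_1$: then $\l u,\xi\r\equiv 0$ because $u$ is $\g_1$-valued and $\g_1$ is Lagrangian, and your Cartan-formula computation finishes immediately. That is also the case that matters for the surrounding argument, since the sections $\a^*\l\theta^{L},\xi\r$, $\a^*\l\theta^{R},\xi\r$ with $\xi\in\g_1$ are exactly the ones spanning $(C^{L})^\perp$, $(C^{R})^\perp$ in the proof of Theorem~\ref{th:restrict}. So to repair your proof you should either restrict the bracket identities to $\xi\in\g_1$ (or, more generally, to $\xi$ with $\l\Ad_{d^{-1}}\zeta,p_2\xi\r$ locally constant), or record the extra exact terms displayed above. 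In fairness, the paper's own one-line proof asserts the Lie-derivative formula without the exact term and states the proposition "for all $\zeta,\xi\in\dd$", so you have located precisely the point it glosses over; but as written your proposal does not, and cannot, establish the statement in that generality.
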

\begin{proof}
The Courant bracket $\Cour{\phi^L(\zeta),\a^*\l\theta^L,\xi\r}$ is
computed using p4), together with 
\[\L_{\a(\phi^L(\zeta))}\l\theta^L,\xi\r
=-\l \theta^L,[p_1(\Ad_{d^{-1}}\zeta),\xi]\r.\] The computation of
$\Cour{\phi^R(\zeta),\a^*\l\theta^R,\xi\r}$ is similar, and the other
properties of $\phi^L,\phi^R$ had already been established in the
proof of Theorem \ref{th:restrict}.
\end{proof}

\begin{remark}
  The fact that \eqref{eq:dress} defines $\dd$-actions holds true for
  any Lie algebra $\dd$, with Lie algebras $\g_1,\g_2$ such that
  $\dd=\g_1\oplus\g_2$ as vector spaces. If $\dd$ carries an invariant
  inner product for which $\g_2$ is co-isotropic, the $\dd$-space
  $G_1$ (for each of these actions) defines a Courant algebroid
  $G_1\times\dd$, and similarly $D\times\dd$.
\end{remark}

Let us denote $\A_{G_1}=\Phi^!\A_D$. From now on, we will work with
the identification 
\[ \A_{G_1}=\ol{\A^R}=G_1\times\ol{\dd}\]
from Theorem \ref{th:restrict}. By Proposition \ref{prop:pullbacks},
the map $\Phi\colon G_1\to D$ lifts to a Courant morphism
\[ P_\Phi\colon \A_{G_1}\da \A_D,\ \ 
(g,\zeta)\sim_{P_\Phi} (\Phi(g),\varphi(\xi,\zeta)),\ \ \zeta\in\dd,\ \xi\in\g_1\]
with
\begin{equation}\label{eq:varphi}
 \varphi(\xi,\zeta)=(p_2(\Ad_{\Phi(g)}\zeta)-\xi,\zeta-\Ad_{\Phi(g^{-1})}\xi).
\end{equation}

\subsection{Multiplicative properties}
As discussed in Example \ref{ex:courmult} the multiplication map
$\Mult$ for the group $D$ lifts to a Courant morphism $R_\Mult\colon
\A_D\times \A_D\da \A_D$. 
\begin{proposition}
The multiplication map $\Mult\colon G_1\times G_1\to G_1$ lifts to a
Courant morphism 
\begin{equation}\label{eq:q}
Q_\Mult\colon
\A_{G_1}\times \A_{G_1}\da \A_{G_1},\end{equation}
with $R_\Mult\circ (P_\Phi\times P_\Phi)= P_\Phi\circ Q_\Mult$.  For
all $g',g''\in G_1$ with product $g=g'g''$, the fiber
$Q_\Mult|_{(g,g',g'')}$ consists of all elements
$(\zeta,\zeta',\zeta'')\in \dd\oplus\ol{\dd}\oplus\ol{\dd}$ such that
\begin{equation}\label{eq:complicated}
\zeta=\zeta''+\Ad_{\Phi((g'')^{-1})}p_1(\zeta'),\ \ 
p_2(\Ad_{\Phi(g'')}\zeta'')=p_2(\zeta').
\end{equation}
At any given point $(g,g',g'')$, $\ran(Q_\Mult)=\ol{\dd}$ while 
\[ \ker(Q_\Mult)=\{(-\Ad_{\Phi((g'')^{-1})}\xi,\xi)|\ \xi\in\g_1\}.\]
\end{proposition}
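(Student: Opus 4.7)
The plan is to construct $Q_\Mult$ as a composition of previously established Courant morphisms, and then identify it with the explicit Lagrangian subbundle described in \eqref{eq:complicated}.

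First, I would define $Q_\Mult$ by forcing the required compatibility: set
\[
Q_\Mult := P_\Phi^t \circ R_\Mult \circ (P_\Phi \times P_\Phi),
\]
a relation from $\A_{G_1}\times\A_{G_1}$ to $\A_{G_1}$ covering the multiplication map $\Mult\colon G_1\times G_1\to G_1$. Each of $P_\Phi$, $P_\Phi\times P_\Phi$, and $R_\Mult$ is a Courant morphism. Since composition of relations is associative, the compatibility $R_\Mult\circ(P_\Phi\times P_\Phi)=P_\Phi\circ Q_\Mult$ then holds essentially by definition. The one point to verify is that this composed relation is a smooth Lagrangian subbundle closed under the Courant bracket, i.e.\ that the relevant clean-intersection conditions hold; this can be checked directly from the explicit parametrizations.

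Next, I would unwind the definition to prove the fiber formula \eqref{eq:complicated}. By Example \ref{ex:courmult}, the fiber of $R_\Mult$ at $(d,d',d'')$ with $d=d'd''$ consists of triples $((u,v),(u',v'),(u'',v''))\in(\dd\oplus\ol{\dd})^3$ satisfying $u=u'$, $v=v''$, $v'=u''$. Combined with \eqref{eq:varphi}, a triple $(\zeta,\zeta',\zeta'')$ lies in $Q_\Mult|_{(g,g',g'')}$ iff there exist $\xi,\xi',\xi''\in\g_1$ such that $\varphi(\xi,\zeta),\varphi(\xi',\zeta'),\varphi(\xi'',\zeta'')$ satisfy these three identities. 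Splitting each identity according to $\dd=\g_1\oplus\g_2$, the $\g_1$-component of $u=u'$ gives $\xi=\xi'$, and the $\g_1$-component of $v'=u''$ determines $\xi''$ in terms of $\xi'$ and $\zeta',\zeta''$. Using the cocycle identity $\Ad_{\Phi(g^{-1})}=\Ad_{\Phi((g'')^{-1})}\Ad_{\Phi((g')^{-1})}$, the decomposition $p_1+p_2=\on{id}$, and $G_1$-invariance of $\g_1$, substitution into $v=v''$ and into the remaining $\g_2$-components collapses the system to precisely the two constraints displayed in \eqref{eq:complicated}.

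Finally, I would compute $\ker(Q_\Mult)$ and $\ran(Q_\Mult)$ at a fixed point $(g,g',g'')$. Setting $\zeta=0$ in \eqref{eq:complicated}, the first constraint gives $\zeta''=-\Ad_{\Phi((g'')^{-1})}p_1(\zeta')$; substituting into the second and using $p_2\circ p_1=0$ forces $p_2(\zeta')=0$, so $\zeta'=:\xi\in\g_1$, which recovers the stated kernel. For the range, given any $\zeta\in\ol{\dd}$, take $\zeta':=p_2(\Ad_{\Phi(g'')}\zeta)\in\g_2$ and $\zeta'':=\zeta$; since $p_1(\zeta')=0$ the first constraint holds automatically, and the second holds by construction, showing $\ran(Q_\Mult)=\ol{\dd}$.

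The principal obstacle is the algebraic reduction in the middle step: three coupled equations in $\xi,\xi',\xi''\in\g_1$ must be eliminated to produce only the two claimed constraints on $(\zeta,\zeta',\zeta'')$. The bookkeeping is tight, and the $\Ad$-invariance of $\g_1$ under $\Phi(G_1)$ is used implicitly to show that the potential third condition $p_2(\Ad_{\Phi(g)}\zeta)=p_2(\Ad_{\Phi(g')}\zeta')$ coming from the $\g_2$-component of $u=u'$ is in fact a consequence of the other two. The transversality check needed in the first step, while conceptually routine, also has to be verified pointwise from the explicit parametrizations.
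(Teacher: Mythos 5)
Your computational core is sound and runs essentially parallel to the paper's own calculation: the three matching conditions coming from $R_\Mult$ combined with \eqref{eq:varphi}, the elimination of $\xi,\xi',\xi''$ (including the genuinely necessary observation that the $\g_2$-component of $u=u'$ is a consequence of \eqref{eq:complicated} and the $\Ad_{\Phi(G_1)}$-invariance of $\g_1$), and the kernel computation all check out; your range argument, exhibiting for each $\zeta$ the explicit solution $\zeta''=\zeta$, $\zeta'=p_2(\Ad_{\Phi(g'')}\zeta)$, is even more direct than the paper's dimension count via \eqref{eq:dimensions}. The paper does the same algebra in slightly different coordinates, phrasing it through the coisotropic subbundle $C=C^R$ and the quotient map $(u,v)\mapsto v-\Ad_{d^{-1}}p_1(u)$.

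The gap is in the structural step. First, with $Q_\Mult:=P_\Phi^t\circ R_\Mult\circ(P_\Phi\times P_\Phi)$ the identity $P_\Phi\circ Q_\Mult=R_\Mult\circ(P_\Phi\times P_\Phi)$ is \emph{not} definitional: $P_\Phi\circ P_\Phi^t$ is only an idempotent relation on $\A_D$, not the identity, so you must verify that fiberwise $\ran\big(R_\Mult\circ(P_\Phi\times P_\Phi)\big)\subset\ran(P_\Phi)$ (at $\Phi(g)$ one has $\ran(P_\Phi)=C^R$, and the inclusion does follow from the matching conditions and $\Ad$-invariance of $\g_1$) and that the ambiguity in $P_\Phi$-lifts, namely the elements $\varphi(\eta,0)$ with $\eta\in\g_1$, satisfies $0\sim_{R_\Mult\circ(P_\Phi\times P_\Phi)}\varphi(\eta,0)$ so that it is absorbed; both are true but need to be said, and your later formulas essentially contain them. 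Second, and more seriously, you cannot establish that $Q_\Mult$ is a Courant morphism "directly from the explicit parametrizations": axioms m1) and m2) are pointwise, but m3) (closure of the space of sections under the Courant bracket) is not, and you also cannot appeal to closure of Courant morphisms under composition, because $P_\Phi^t$ is not a Courant morphism (it covers no smooth map $D\to G_1$; note also that if $\Phi$ fails to be injective the naive set-theoretic composition lives over all $(g;g',g'')$ with $\Phi(g)=\Phi(g')\Phi(g'')$, so one must restrict to $\on{Graph}(\Mult)$). The paper supplies exactly this missing ingredient by realizing $Q_\Mult$ as the coisotropic reduction \eqref{eq:smult}, i.e. as $\Phi^*\big(R_\Mult\cap(C\times C\times C)\big)$ modulo $\Phi^*\big(R_\Mult\cap(C^\perp\times C^\perp\times C^\perp)\big)$, so that bracket closure is inherited from $R_\Mult$ and Proposition \ref{prop:cois}. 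Since your composed relation coincides fiberwise with this reduction, adding that identification (or an equivalent argument with bracket-closed families of sections spanning $Q_\Mult$) would close the gap.
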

\begin{proof}
We will obtain $Q_\Mult$ as the `pull-back' of $R_\Mult$, in
the sense that 
\begin{equation}\label{eq:smult}
 Q_\Mult=\Phi^*(R_\Mult\cap (C\times C\times C))/\Phi^*(R_\Mult\cap
 (C^\perp\times C^\perp\times C^\perp)).\
\end{equation}
with $C=C^R$ as in the proof of Theorem \ref{th:restrict}. Recall that
$R_\Mult$ consists of elements of the form $(x,z;x,y,y,z)\subset
\dd\oplus\ol{\dd}\oplus\ol{\dd\oplus\ol{\dd}\oplus\dd\oplus\ol{\dd}}$,
while $C$ consists of elements $(u,v)\in \dd\oplus\ol{\dd}$ with $p_2(\Ad_d v-u)=0$.
Taking the intersection of $R_\Mult$ with $C\times C\times C$ imposes
conditions, at $(d,d',d'')\in D\times D\times D$,
\begin{equation}\label{eq:three}
p_2(\Ad_d z-x)=0,\ \ p_2(\Ad_{d'}y-x)=0,\ \ p_2(\Ad_{d''}z-y)=0.\end{equation}
Since the quotient map
$C/C^\perp$ takes $(u,v)$ to $\zeta=v-\Ad_{d^{-1}}p_1(u)$, the quotient on
the right hand side of \eqref{eq:smult} consists of elements $(\zeta,\zeta',\zeta'')$
where 
\[ \zeta=z-\Ad_{d^{-1}}p_1(x),\ \ \zeta'=y-\Ad_{(d')^{-1}}p_1(x),\ \ \zeta''=z-\Ad_{(d'')^{-1}}p_1(y).\]
with $x,y,z\in\dd$ subject to the conditions \eqref{eq:three}. It is
straightforward to check that these are related by
\eqref{eq:complicated} if $d',d'',d$ are the images of
$g',g'',g=g'g''\in G_1$. Suppose now that $(\zeta',\zeta'')\in
\ker(Q_\Mult)$ (at a given point $(g',g'')\in G_1\times G_1$). The
first equation in \eqref{eq:complicated} (with $\zeta=0$) shows that
$\zeta''\in\g_1$, and then the second equation shows that
$\zeta'\in\g_1$ also, with
$\zeta''+\Ad_{\Phi((g'')^{-1})}p_1(\zeta')=0$. This gives the desired
description of $\ker(Q_\Mult)$, while $\ran(Q_\Mult)=\ol{\dd}$ follows
by dimension count using \eqref{eq:dimensions}. 
\end{proof}

\subsection{Lagrangian splittings}
Let $\mf{e}_-=\g_1\oplus\g_1,\ \mf{f}_-=\g_2\oplus\g_2$ be the
Lagrangian splitting of $\dd\oplus\ol{\dd}$ discussed in Section
\ref{sec:splitdd}.  It defines the Manin triple
$(\A_D,E_-,F_-)$ with $E_-=D\times\mf{e}_-$ and
$F_-=D\times\mf{f}_-$, and the corresponding Poisson structure
$\pi_-$. On the other hand, the splitting $\dd=\g_1\oplus\g_2$ defines
a Manin triple $(\A_{G_1},\ E,\ F)$ over $G_1$, with 
$E=G_1\times\g_1$ and $F=G_1\times\g_2$. Let $\pi\in\mf{X}^2(G_1)$ be
the corresponding Poisson structure.
\begin{proposition}
The backward images of $E_-,F_-$ under the Courant morphism
$P_\Phi$ are
$E,F$ respectively. 
In fact, 
\[ (E,F)\sim_{P_\Phi}(E_-,F_-).\] 
so that $\Phi\colon G_1\to D$ is a Poisson map: $\pi\sim_\Phi \pi_-$.
\end{proposition}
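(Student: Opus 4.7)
The plan is to verify $(E,F)\sim_{P_\Phi}(E_-,F_-)$ pointwise from the explicit formula \eqref{eq:varphi}, and then invoke the theorem at the end of Section~\ref{sec:man} (on $R_\Phi$-related Lagrangian splittings producing $\Phi$-related bivectors) to conclude $\pi\sim_\Phi\pi_-$.

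First I would compute the backward images. With $\varphi(\xi,\zeta)=(p_2(\Ad_{\Phi(g)}\zeta)-\xi,\,\zeta-\Ad_{\Phi(g^{-1})}\xi)$ for $\xi\in\g_1$, the condition $\varphi(\xi,\zeta)\in\mf{e}_-=\g_1\oplus\g_1$ on the first component forces $p_2(\Ad_{\Phi(g)}\zeta)=0$, since $p_2(\Ad_{\Phi(g)}\zeta)\in\g_2$, $\xi\in\g_1$, and $\g_1\cap\g_2=0$. Because $g\in G_1$ integrates $\g_1$ we have $\Ad_{\Phi(g)}\g_1=\g_1$, so this pins down $\zeta\in\g_1$. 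The second-component condition becomes $\Ad_{\Phi(g^{-1})}(\Ad_{\Phi(g)}\zeta-\xi)\in\g_1$, which holds automatically for every $\xi\in\g_1$ by the same invariance. Hence $E_-\circ P_\Phi=E$. Analogously, $\varphi(\xi,\zeta)\in\mf{f}_-=\g_2\oplus\g_2$: the first component lies in $\g_2$ iff $\xi=0$, and then the second forces $\zeta\in\g_2$, giving $F_-\circ P_\Phi=F$.

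Next I would verify the two splitting conditions of Definition~\ref{def:lagrel}. Setting $\varphi(\xi,\zeta)=0$ and projecting the first component onto $\g_2$ gives $p_2(\Ad_{\Phi(g)}\zeta)=0$, so that $\xi=0$; the second component then gives $\zeta=0$. Thus $\ker(P_\Phi)=0$ and the kernel condition is vacuous. For the range, I would use the substitution $w=\zeta-\Ad_{\Phi(g^{-1})}\xi$ to rewrite $\varphi$ as $(p_2(\Ad_{\Phi(g)}w)-\xi,\,w)$, and identify
\[
\on{ran}(P_\Phi)|_{\Phi(g)}=\{(u,w)\in\dd\oplus\ol{\dd}\mid p_2(\Ad_{\Phi(g)}w-u)=0\}=C^R|_{\Phi(g)},
\]
of dimension $3\dim\g_1$ (cf.\ Theorem~\ref{th:restrict}). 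Its intersection with $\mf{e}_-$ is all of $\g_1\oplus\g_1$ (the defining condition is automatic for $u,w\in\g_1$ since $\Ad_{\Phi(g)}\g_1=\g_1$), of dimension $2\dim\g_1$; its intersection with $\mf{f}_-$ is $\{(p_2(\Ad_{\Phi(g)}w),w)\mid w\in\g_2\}$, of dimension $\dim\g_1$. Since $\mf{e}_-\cap\mf{f}_-=0$, these sum directly to the full $3\dim\g_1$, giving the required direct sum decomposition.

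With $(E,F)\sim_{P_\Phi}(E_-,F_-)$ established, the bivector-splitting theorem from Section~\ref{sec:man} immediately yields $\pi\sim_\Phi\pi_-$, i.e.\ $\Phi\colon G_1\to D$ is a Poisson map. The only real obstacle is the bookkeeping of $\g_1$- versus $\g_2$-components in $\varphi$ and the dimension count for the range; all the deeper structural content is already packaged in the backward-image formula (Proposition~\ref{prop:natural}) and the splitting-compatibility theorem of Section~\ref{sec:man}.
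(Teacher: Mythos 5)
Your proof is correct and follows essentially the same route as the paper: compute the backward images of $\mf{e}_-,\mf{f}_-$ pointwise from the explicit formula for $\varphi(\xi,\zeta)$, then verify the conditions of Definition \ref{def:lagrel} (the kernel being zero, hence vacuous) and invoke the splitting-compatibility theorem of Section \ref{sec:man}. The only cosmetic difference is that you verify the range condition by identifying $\ran(P_\Phi)$ with $C^R$ and counting dimensions of its intersections with $\mf{e}_-,\mf{f}_-$, whereas the paper checks directly that $\pr_{E_-}(\varphi(\xi,\zeta))=\varphi(\xi,p_1(\zeta))$ lies in the range; both arguments are equally valid.
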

\begin{proof}
  For any Lagrangian subspace $\mf{l}\subset (\dd\oplus \ol{\dd})$,
  the backward image $\mf{l}\circ P_\Phi|_{(\Phi(g),g)}$ consists of
  all $\zeta\in \dd$ such that there exists $\xi\in\g_1$ such that
  $\varphi(\xi,\zeta)$ (cf. \eqref{eq:varphi}) lies in $\mf{l}$.  For
  $\mf{l}=\mf{e}_-=g_1\oplus\g_1$, these conditions say that
  $\zeta\in\g_1$. For $\mf{l}_-=\mf{f}_-=\g_2\oplus\g_2$, the
  conditions say that $\xi=0$ and $\zeta\in \g_2$. This gives the
  desired description of the backward image. It remains to show that
  $\ran(P_\Phi)$ is the direct sum of its intersections with
  $E_-,F_-$, or equivalently that $\pr_E(\ran(P_\Phi))\subset
  \ran(P_\Phi)$. By definition, $\ran(R_\Phi)$ consists of all
  elements $\varphi(\xi,\zeta)$ with $\xi\in\g_1,\ \zeta\in\dd$. For
  any such element, the projection to $E_-$ is given by
\[ \pr_E(\varphi(\xi,\zeta))=(-\xi,p_1(\zeta)-\Ad_{\Phi(g^{-1})}\xi)
=\varphi(\xi,p_1(\zeta))\]
since $p_2(\Ad_{\Phi(g)}p_1(\zeta))=0$. Thus
$\pr_E(\varphi(\xi,\zeta))\in \ran(R_\Phi)$. 
\end{proof}

\begin{remark}
  By contrast, the backward images of $E_+,F_+$ under $P_\Phi$ are
  \emph{both} equal to $E$.
\end{remark}

\begin{proposition}
The forward image of $E\times E$ under the Courant
morphism $Q_\Mult$ is $E$. Similarly the forward image of
$F\times F$ is $F$. Indeed one has
\[ (E\times E, F\times F)
\sim_{Q_\Mult} (E,\,F);\]
in particular $\Mult\colon G_1\times G_1\to G_1$ is a Poisson map. 
\end{proposition}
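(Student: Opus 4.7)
The strategy is to verify the hypothesis $(E\times E, F\times F)\sim_{Q_\Mult} (E,F)$ of Definition~\ref{def:lagrel} and then invoke the compatibility theorem at the end of Section~\ref{sec:man}. Once the relation of splittings is established, the Poisson-map conclusion is immediate: the bivector on $G_1\times G_1$ defined by $(E\times E, F\times F)$ is, by the explicit formula \eqref{eq:piformula}, the product $\pi\oplus\pi$, while that on $G_1$ defined by $(E,F)$ is $\pi$, so $\pi\oplus\pi\sim_\Mult\pi$ is exactly the multiplicativity of $\pi$.

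I would begin with the forward-image computation, using the fiberwise description \eqref{eq:complicated} of $Q_\Mult$ at a triple $g=g'g''$. For $(\zeta',\zeta'')\in\g_1\oplus\g_1$, the vanishing $p_2|_{\g_1}=0$ makes the second equation of \eqref{eq:complicated} trivial, while the first reads $\zeta=\zeta''+\Ad_{\Phi((g'')^{-1})}\zeta'$; this lies in $\g_1$ because $\Phi(g'')\in G_1$ and $\g_1$ is $\Ad(G_1)$-invariant. Conversely, any $\zeta\in\g_1$ is reached by $\zeta'=0,\,\zeta''=\zeta$, so the forward image of $E\times E$ equals $E$. Analogously, for $(\zeta',\zeta'')\in\g_2\oplus\g_2$ one has $p_1(\zeta')=0$, forcing $\zeta=\zeta''\in\g_2$; the second equation then determines $\zeta'=p_2(\Ad_{\Phi(g'')}\zeta'')$, and any $\zeta\in\g_2$ is obtained by $\zeta''=\zeta$. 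Hence the forward image of $F\times F$ equals $F$.

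Next I would verify the two direct-sum conditions of Definition~\ref{def:lagrel}. The description $\ker(Q_\Mult)|_{(g',g'')}=\{(-\Ad_{\Phi((g'')^{-1})}\xi,\xi):\xi\in\g_1\}$ given in the preceding proposition shows at a glance that $\ker(Q_\Mult)\subset E\times E$, so the intersection with $F\times F$ is zero and $\ker(Q_\Mult)$ is trivially the sum of its intersections with the two Lagrangians. On the target side, $\ran(Q_\Mult)=\ol{\dd}=\g_1\oplus\g_2$ is literally the sum $(\ran(Q_\Mult)\cap E)\oplus(\ran(Q_\Mult)\cap F)$. Combining these observations with the forward-image computation—using also $\ker(Q_\Mult^t)=\ran(Q_\Mult)^\perp=0$ from Lemma~\ref{lem:isom}, so that reduction on the target is trivial—one obtains $E\times E\sim_{Q_\Mult} E$ and $F\times F\sim_{Q_\Mult} F$, and thus $(E\times E,F\times F)\sim_{Q_\Mult}(E,F)$.

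Applying the compatibility theorem from Section~\ref{sec:man} then yields $\pi\oplus\pi\sim_\Mult\pi$, which is the statement that $\Mult$ is Poisson. The only step with any content is the inclusion $\ker(Q_\Mult)\subset E\times E$, which ultimately rests on $\g_1$ being a Lie subalgebra (so that $\Ad_{\Phi(G_1)}\g_1=\g_1$); all remaining conditions are either automatic from dimension count or already built into the construction of $Q_\Mult$.
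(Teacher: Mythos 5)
Your argument is correct and follows essentially the same route as the paper: compute the forward images of $E\times E$ and $F\times F$ fiberwise from \eqref{eq:complicated} (with the $\g_1\oplus\g_1$ case automatic by $\Ad(G_1)$-invariance of $\g_1$ and the $\g_2\oplus\g_2$ case forcing $\zeta=\zeta''$), then note that conditions (a),(b) of Definition~\ref{def:lagrel} hold because $\ker(Q_\Mult)\subset E\times E$ and $\ran(Q_\Mult)=G_1\times\ol{\dd}$, and conclude via the compatibility theorem of Section~\ref{sec:man}. Your extra checks (surjectivity of the forward images and the identification of the product bivector as $\pi\oplus\pi$) are just details the paper leaves implicit.
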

\begin{proof}
  By \eqref{eq:complicated}, we see that the forward image
  $Q_\Mult|_{(g,g',g'')}\circ \mf{l}$ (for $g',g''\in G_1,\ g=g'g''$)
  of any Lagrangian subspace $\mf{l}\subset \dd\oplus\dd$ consists of
  all $\zeta=\zeta''+\Ad_{\Phi((g'')^{-1})}p_1(\zeta')$, such that
  $(\zeta',\zeta'')\in \mf{l}$ satisfying the condition
  $p_2(\Ad_{\Phi(g'')}\zeta'')=p_2(\zeta')$.  If
  $\mf{l}=\g_1\oplus\g_1$ the extra condition is automatic, and we
  find that $\zeta\in\g_1$. If $\mf{l}=\g_2\oplus\g_2$ the formula for
  $\zeta$ simplifies to $\zeta=\zeta''\in\g_2$, while the extra
  condition fixes $\zeta'$ as $\zeta'=p_2(\Ad_{\Phi(g'')}\zeta'')$
  This shows $(E\times E, F\times F)\sim_{Q_\Mult} (E,\,F)$: Indeed
  the conditions (a),(b) from Definition \ref{def:lagrel} on
  $\ker(Q_\Mult),\ran(Q_\Mult)$ are automatic since
  $\ker(Q_\Mult)\subset E\times E$ and
  $\ran(Q_\Mult)=G_1\times\ol{\dd}$.
\end{proof}


\bibliographystyle{amsplain} 

\def\cprime{$'$} \def\polhk#1{\setbox0=\hbox{#1}{\ooalign{\hidewidth
  \lower1.5ex\hbox{`}\hidewidth\crcr\unhbox0}}} \def\cprime{$'$}
  \def\cprime{$'$} \def\polhk#1{\setbox0=\hbox{#1}{\ooalign{\hidewidth
  \lower1.5ex\hbox{`}\hidewidth\crcr\unhbox0}}} \def\cprime{$'$}
  \def\cprime{$'$}
\providecommand{\bysame}{\leavevmode\hbox to3em{\hrulefill}\thinspace}
\providecommand{\MR}{\relax\ifhmode\unskip\space\fi MR }
\providecommand{\MRhref}[2]{%
  \href{http://www.ams.org/mathscinet-getitem?mr=#1}{#2}
}
\providecommand{\href}[2]{#2}

\end{document}